\documentclass[11pt]{amsart}

\usepackage{enumitem}
\usepackage{amssymb}
\usepackage{tikz-cd} 
\usepackage{hyperref}
\hypersetup{colorlinks=true}
\usepackage{dsfont}
\usepackage{mdframed}
\usetikzlibrary{decorations.pathmorphing}
\usepackage[toc,page]{appendix}



\theoremstyle{plain}
\newtheorem{theorem}{Theorem}[section]

\newtheorem{proposition}[theorem]{Proposition}
\newtheorem{lemma}[theorem]{Lemma}
\newtheorem{corollary}[theorem]{Corollary}

\newtheorem*{theorem*}{Theorem}



\theoremstyle{definition}
\newtheorem{definition}[theorem]{Definition}
\newtheorem*{definition*}{Definition}

\theoremstyle{remark}
\newtheorem{remark}{Remark}[section]

\DeclareMathOperator{\Aut}{Aut}

\DeclareMathOperator{\GL}{\mathbf{GL}}

\DeclareMathOperator{\rank}{rank}

\DeclareMathOperator{\id}{id}

\DeclareMathOperator{\Comm}{Comm}
\DeclareMathOperator{\Rad}{Rad}

\DeclareMathOperator{\Isom}{Isom}

\title{Approximate lattices and S-adic linear groups}
\author{Simon Machado \\
ETHZ}
\email{smachado@ethz.ch}
\begin{document}

\begin{abstract}
We provide and motivate in this paper a natural framework for the study of approximate lattices. Namely,  we consider approximate lattices in so-called $S$-adic linear groups and define relevant notions of arithmeticity.  We also adapt to this framework classical results of the theory of lattices and Meyer sets. Results from this paper will play a role in the proof of a structure theorem for approximate lattices in $S$-adic linear groups which is the subject of a companion paper.

 We extend a theorem of Schreiber's concerning the coarse structure of approximate subgroups in Euclidean spaces to approximate subgroups of unipotent $S$-adic groups. We generalise  Meyer's structure theorem for approximate lattices in locally compact abelian groups to a precise structure theorem for approximate lattices in unipotent $S$-adic groups. Finally, we study intersections of approximate lattices of $S$-adic linear groups with certain subgroups such as the nilpotent radical and Levi subgroups. We furthermore show that the framework of $S$-adic linear groups enables us to provide statements more precise than earlier results. 
\end{abstract}

\maketitle

\section{Introduction}

The first instances of approximate lattices where studied in seminal work of Yves Meyer \cite{meyer1972algebraic, moody1997meyer}. There he studied more specifically approximate lattices of Euclidean spaces - now dubbed \emph{Meyer sets} - and their link with Pisot numbers. More recently, Michael Bj\"{o}rklund and Tobias Hartnick launched the study of approximate lattices beyond Euclidean spaces \cite{bjorklund2016approximate}. Following this work, a number of advances in the theory of approximate lattices were made \cite{cordes2020foundations, machado2020apphigherrank, hrushovski2020beyond}.

In this work we study basic properties of approximate lattices in $S$-adic linear groups. A group $G$ is called an $S$-\emph{adic linear group} if there exists a finite set $S$ of inequivalent places of $\mathbb{Q}$ such that $G = \prod_{v \in S} \mathbf{G}(\mathbb{Q}_v)$ where for all $v \in S$, $\mathbb{Q}_v$ denotes the completion of $\mathbb{Q}$ with respect to $v$ and $\mathbf{G}_v$ is a linear algebraic group defined over $\mathbb{Q}_v$. An $S$-adic linear group thus always comes equipped with a Hausdorff topology inherited from the respective topologies of the local fields $\mathbb{Q}_v$.

A subset $\Lambda$ of a group $G$ is an \emph{approximate subgroup} if $e \in \Lambda$, $\Lambda=\Lambda^{-1}$ and there is $F \subset G$ finite such that $\Lambda^2 \subset F \Lambda$. If $G$ is locally compact, $\Lambda$ is moreover an \emph{approximate lattice} if it is discrete and there is a subset $\mathcal{F}$ of finite Haar measure such that $\Lambda \mathcal{F} =G$. Meyer presented in \cite{meyer1972algebraic} a way to build many approximate lattices. A triple $(G,H,\Gamma)$ is called a \emph{cut-and-project scheme} if $G$ and $H$ are locally compact groups and $\Gamma \subset G \times H$ is a lattice. Given a symmetric relatively compact neighbourhood of the identity $W_0 \subset H$ called the \emph{window}, one can build the \emph{model set} 
$$ M:=p_G\left(\Gamma \cap G \times W_0\right).$$
In this construction, $G$ is called the \emph{physical space} of $M$ and $H$ is the \emph{internal space}. All model sets are approximate lattices \cite{bjorklund2016approximate, meyer1972algebraic}. Conversely, Meyer famously showed that all approximate lattices of Euclidean spaces are close to model sets \cite{meyer1972algebraic}.

In what follows we argue that while being a natural and already very general framework presenting many new phenomena, $S$-adic linear groups also provide enough algebraic structure to afford precise statements. The two main results of this paper illustrate this. 

\subsection{Meyer's theorem}
A cornerstone of the theory of approximate lattices in Euclidean spaces - a.k.a.  \emph{Meyer sets} - is the theorem of Meyer asserting that all approximate lattices of Euclidean spaces are commensurable to model sets \cite{meyer1972algebraic}.  Following \cite{bjorklund2016approximate},  there was a recent push for generalisations of Meyer's theorem to more general non-commutative groups. We propose below a result of this nature: 

\begin{proposition}[Meyer's theorem for unipotent $S$-adic linear groups]\label{Proposition: Meyer's theorem $S$-adic unipotent group, intro}
 Let $\Lambda$ be an approximate lattice in a unipotent $S$-adic algebraic group $U$. Then there is a unipotent $\mathbb{Q}$-group $\mathbf{U}$ such that: 
\begin{enumerate}
\item there is a surjective regular group homomorphism $\pi:\mathbf{U}(\mathbb{A}_{S}) \rightarrow U$;
\item $\mathbf{U}(\mathbb{A}_{S})$ is equal to the product $U_1 \times U_2$ of two Zariski-closed unipotent subgroups;
\item $\pi_{|U_1}$ is an isomorphism and $U_2 = \ker \pi$;
\item $\Lambda$ is commensurable with a model set coming from the cut-and-project scheme $(U_1,U_2, \mathbf{U}(\mathbb{Z}_S))$;
\item if $\alpha$ is a continuous automorphism of $U$ that commensurates $\Lambda$, then there is a regular automorphism $\alpha_{\mathbf{U}}$ of $\mathbf{U}(\mathbb{A}_S)$ defined over $\mathbb{Q}$ that stabilises $U_1$ and $U_2$ and such that $\alpha \circ \pi = \pi \circ \alpha_{\mathbf{U}}$. 
\end{enumerate}
\end{proposition}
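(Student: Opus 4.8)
\textit{Plan.} The idea is to pass from $\Lambda$ to the group $\Gamma := \langle\Lambda\rangle$, to algebraise $\Gamma$ together with its inclusion in $U$ by Mal'cev theory, and to reconstruct the cut-and-project description from this algebraic picture. After replacing $\Lambda$ by a commensurable approximate lattice I may assume $\Gamma$ is torsion-free; since approximate lattices in unipotent $S$-adic groups are uniform, $\Gamma$ is Zariski-dense in $U=\prod_{v\in S}\mathbf{U}_v(\mathbb{Q}_v)$. Invoking the coarse-structure results established earlier in the paper --- our $S$-adic extension of Schreiber's theorem and, as a consequence, the abelian case of the present statement --- I would show that $\Gamma$ is a torsion-free nilpotent group of finite rank whose denominators are supported on $S$; equivalently, the Mal'cev $\mathbb{Q}$-completion $\mathbf{U}_0$ of $\Gamma$ is a unipotent $\mathbb{Q}$-group carrying a $\mathbb{Z}$-structure for which $\Gamma$ is commensurable with the lattice $\mathbf{U}_0(\mathbb{Z}_S)$ of $\mathbf{U}_0(\mathbb{A}_S)$. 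That the rank is finite and the denominators are $S$-controlled is exactly where the discreteness and co-compactness of $\Lambda$ are used.

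Next I would algebraise the inclusion $\iota\colon\Gamma\hookrightarrow U$. For each $v\in S$, Mal'cev rigidity --- which via the logarithm reduces to the fact that a $\mathbb{Q}$-linear map out of the finite-dimensional Lie algebra of $\mathbf{U}_0$ extends uniquely to a $\mathbb{Q}_v$-linear one, automatically a Lie algebra morphism since the bracket is $\mathbb{Q}$-bilinear --- shows that the $v$-component $\iota_v$ is the restriction of a unique regular $\mathbb{Q}_v$-homomorphism $\phi_v\colon\mathbf{U}_0\times_{\mathbb{Q}}\mathbb{Q}_v\to\mathbf{U}_v$, surjective because $\Gamma$ is Zariski-dense in $U$. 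Hence $\Phi:=\prod_{v\in S}\phi_v\colon\mathbf{U}_0(\mathbb{A}_S)\to U$ is a surjective continuous homomorphism carrying the diagonal copy of $\mathbf{U}_0(\mathbb{Z}_S)$ onto a commensurated copy of $\iota(\Gamma)$. Since commensurable nilpotent groups have the same Mal'cev completion, the group $\mathbf{U}$ sought in the statement must be $\mathbf{U}_0$, so the crux is the following assertion, which I would isolate as a lemma: for each $v$ the kernel $\ker\phi_v$ admits a $\mathbb{Q}_v$-defined complement $\mathbf{V}_{1,v}$ in $\mathbf{U}_0\times_{\mathbb{Q}}\mathbb{Q}_v$, and these can be chosen so that $\mathbf{U}_0(\mathbb{Z}_S)$ projects injectively onto $U_1:=\prod_{v}\mathbf{V}_{1,v}(\mathbb{Q}_v)$ and densely onto the complementary factor $U_2:=\ker\Phi$.

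This lemma --- the non-commutative form of the algebraicity and general-position phenomena underlying Meyer's theorem --- is the step I expect to be the main obstacle. I would prove it by induction on the nilpotency class of $U$, the base case being the abelian $S$-adic Meyer theorem. For the inductive step let $Z$ be the last nontrivial term of the lower central series of $U$; by the intersection results of the paper, $\Lambda\cap Z$ is an approximate lattice in the abelian group $Z$ and its image is an approximate lattice in $U/Z$, so the abelian case and the inductive hypothesis provide the required complements for the Mal'cev form of $\Gamma\cap Z$ and for that of the image of $\Gamma$ in $U/Z$. Realising $U$ as a central extension of $U/Z$ by $Z$, one observes that the extension cocycles over the various $\mathbb{Q}_v$ are pinned down rationally by the commutator pairing of $\Gamma$, which is $\mathbb{Q}$-bilinear and compatible with the $\mathbb{Q}$-structures above and below; this compatibility is what allows the two families of complements to be lifted to a single family for $\mathbf{U}_0$ simultaneously over all places while keeping the injectivity and density conditions.

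Granting the lemma, set $\mathbf{U}:=\mathbf{U}_0$, $\pi:=\Phi$, and $U_1,U_2$ as above; then (1)--(3) hold and $\mathbf{U}(\mathbb{Z}_S)$ is a lattice in $\mathbf{U}(\mathbb{A}_S)=U_1\times U_2$, so $(U_1,U_2,\mathbf{U}(\mathbb{Z}_S))$ is a cut-and-project scheme. One then checks, again by the central-extension bookkeeping that reduces to the abelian case, that $\Lambda$ is commensurable with $p_{U_1}\bigl(\mathbf{U}(\mathbb{Z}_S)\cap(U_1\times W)\bigr)$ for a suitable window $W\subset U_2$, which is (4). For (5), an automorphism $\alpha$ of $U$ commensurating $\Lambda$ preserves $\Gamma$ up to commensuration, hence acts on $\mathbf{U}_0(\mathbb{Q})=\Gamma\otimes_{\mathbb{Z}}\mathbb{Q}$; by Mal'cev rigidity this action is $\mathbb{Q}$-regular on $\mathbf{U}_0$, and since each of its components on $\mathbf{U}_v$ is a continuous, hence $\mathbb{Q}_v$-regular, automorphism, it determines a $\mathbb{Q}$-regular automorphism $\alpha_{\mathbf{U}}$ of $\mathbf{U}$, hence of $\mathbf{U}(\mathbb{A}_S)$, with $\alpha\circ\pi=\pi\circ\alpha_{\mathbf{U}}$. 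It stabilises $U_2=\ker\pi$ by construction, and stabilises $U_1$ because, transporting $\alpha$ through $\pi|_{U_1}$, the restriction $\alpha_{\mathbf{U}}|_{U_1}$ is thereby determined and $\mathbf{U}(\mathbb{A}_S)=U_1\times U_2$ is a direct product; this gives (5).
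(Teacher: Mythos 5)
Your route is genuinely different from the paper's, and the difference exposes the gap. You want to apply Mal'cev completion directly to $\Gamma = \langle\Lambda\rangle$, a dense (not discrete) subgroup of $U$, and you assert that $\Gamma$ has finite Pr\"ufer rank with denominators supported on $S$, attributing this to the $S$-adic Schreiber theorem and ``as a consequence, the abelian case of the present statement.'' Neither implication holds as stated. Schreiber's theorem is purely coarse --- it produces a closed subgroup $U'$ at bounded Hausdorff distance from $\Lambda$ --- and carries no arithmeticity; it does not preclude $\Gamma$ having infinite torsion-free rank (think of a dense subgroup of $\mathbb{R}$ generated by rationally independent reals, which is certainly consistent with the conclusion of Schreiber's theorem with $U'=\mathbb{R}$). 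And the abelian Meyer theorem is not a consequence of Schreiber's: in the classical case the passage from Schreiber to Meyer is exactly the sum--product/Pisot step, which is the arithmetic heart of the matter. So the finite rank and $S$-integrality of $\Gamma$, which is precisely the algebraic content of Meyer's theorem, is in effect being assumed.

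The paper sidesteps this circularity by invoking laminarity (via \cite{machado2019goodmodels}) and passing to a good model $f:\langle\Lambda'\rangle \to H$ onto a connected real unipotent group; the graph $\Gamma_f \subset U \times H$ of $f$ is then a genuine lattice, so Raghunathan's Mal'cev theory and Chahal's congruence-subgroup theorem apply directly and produce the $\mathbb{Q}$-group $\mathbf{U}$ and the arithmetic lattice $\mathbf{U}(\mathbb{Z}_S)$ without circularity. The good model is not an optional convenience: it manufactures the missing internal space that makes the algebraisation legitimate. Moreover, the paper does not obtain the splitting $\mathbf{U}(\mathbb{A}_S)=U_1\times U_2$ by the nilpotency-class induction you sketch; it applies the $S$-adic Schreiber theorem, inside $\mathbf{U}(\mathbb{A}_S)$, to the pullback $\Lambda''\subset\mathbf{U}(\mathbb{Z}_S)$ of $\Lambda$, with normality of $U_1$ coming from the commensurating action of $\mathbf{U}(\mathbb{Z}_S)$ and Zariski density. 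Your ``central-extension bookkeeping'' for the key lemma is plausible in spirit, but you would need to supply a concrete mechanism for the claim that the commutator cocycle pins down compatible complements simultaneously over all places, and you rightly flag this as the main obstacle --- it is exactly where a device like the good model does the work. Part (5) is argued soundly given the earlier steps.
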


While Meyer's theorem has already been generalised to approximate lattices of  amenable locally compact groups \cite{machado2019goodmodels} - a class of locally compact groups far richer than unipotent $S$-adic linear groups - the structure of $S$-adic linear groups enables us to obtain a precise description of the internal space associated with $\Lambda$.  Proposition \ref{Proposition: Meyer's theorem $S$-adic unipotent group, intro} also serves to motivate the choice of $S$-adic linear groups in favour of other natural classes of groups such as Lie groups in our considerations. Indeed, by choosing $G$ to be an $S$-adic linear group,  both the physical space ($G$ above) and the internal space ($H$ above) range through the same class of groups. Properties of model sets in $S$-adic linear groups are thus directly related to properties of lattices in $S$-adic linear groups, for which an extensive theory exists.

\subsection{Intersection theorems}
The second family of results we prove here are so-called \emph{intersection theorems}.  Namely,  these are results asserting that the intersection of any approximate lattice with a natural subgroup (e.g.  radical, centre, Levi subgroup) is also an approximate lattice in the subgroup.  These type of results are inspired from a famous theorem of Bieberbach that we recall now: 

\begin{theorem*}[Bieberbach, \cite{Bieberbach}]
Let $m \geq 1$ be an integer and consider $\Isom(\mathbb{R}^m)$. Let $T_m$ denote the normal subgroup formed of all translations of $\mathbb{R}^m$. Then for any lattice $\Gamma \subset \Isom(\mathbb{R}^m)$, the intersection $T_m \cap \Gamma$ is a lattice in $T_m$.  
\end{theorem*}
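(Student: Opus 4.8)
The plan is to analyse $\Gamma$ through the \emph{linear part} homomorphism $\rho\colon\Isom(\mathbb{R}^m)=\mathbb{R}^m\rtimes\mathrm{O}(m)\to\mathrm{O}(m)$, sending an isometry $x\mapsto Ax+b$ to $A$, whose kernel is exactly $T_m$; so $\rho$ identifies $\Gamma/(\Gamma\cap T_m)$ with $\rho(\Gamma)\le\mathrm{O}(m)$. I would first observe that the whole statement reduces to proving that $\rho(\Gamma)$ is \emph{finite}: granting that, $\Lambda:=\Gamma\cap T_m$ has finite index in $\Gamma$, hence is itself a lattice in $\Isom(\mathbb{R}^m)$, and since $\Isom(\mathbb{R}^m)/T_m\cong\mathrm{O}(m)$ is compact, integrating over the fibres of $\rho$ shows $\Lambda$ has finite covolume in $T_m$ as well; a discrete subgroup of $T_m\cong\mathbb{R}^m$ of finite covolume is a full-rank lattice. (A lattice in $\Isom(\mathbb{R}^m)$ is necessarily uniform, which I take as known; so henceforth $\Gamma$ is discrete and cocompact, and I identify $\Lambda$ with its set of translation vectors in $\mathbb{R}^m$.)

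The tool for finiteness of $\rho(\Gamma)$ is the commutator contraction estimate in $\mathrm{O}(m)$: for $A,B\in\mathrm{O}(m)$ one has $\lVert [A,B]-I\rVert\le 2\,\lVert A-I\rVert\,\lVert B-I\rVert$ in the operator norm, so that a single element near $I$ drives iterated commutators toward $I$. I would use it as follows. Let $\gamma\in\Gamma$ with $\lVert\rho(\gamma)-I\rVert<1/2$, and for $\delta\in\Gamma$ set $c_0=[\gamma,\delta]$, $c_{k+1}=[\gamma,c_k]$. A short computation in $\mathbb{R}^m\rtimes\mathrm{O}(m)$, combined with the estimate, shows the linear part and the translational part of $c_k$ both decay geometrically, so $c_k\to e$; as $\Gamma$ is discrete, $c_k=e$ for all large $k$. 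Taking $\delta=(w,I)\in\Lambda$ one gets $c_k=\big((\rho(\gamma)-I)^k w,\,I\big)$, so $(\rho(\gamma)-I)^k w=0$ for large $k$; letting $w$ run through a $\mathbb{Z}$-basis of $\Lambda$ shows $\rho(\gamma)$ acts unipotently on $V:=\mathrm{span}(\Lambda)$, hence — being orthogonal — as the identity on $V$. In other words: every $\gamma\in\Gamma$ with $\lVert\rho(\gamma)-I\rVert<1/2$ fixes $V$ pointwise.

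If we knew $V=\mathbb{R}^m$, we would be done: then any such $\gamma$ has $\rho(\gamma)=I$, so $\rho(\Gamma)$ is discrete in the compact group $\mathrm{O}(m)$, hence finite, and the reduction of the first paragraph closes the argument. So the step I expect to be the main obstacle is precisely the equality $V=\mathbb{R}^m$, i.e.\ that $\Gamma\cap T_m$ contains enough translations; this is where the cocompactness of $\Gamma$ enters essentially and is the genuine content of Bieberbach's first theorem. The line I would try: $V$ is $\rho(\Gamma)$-invariant (since $\Lambda$ is normal in $\Gamma$), so $\Gamma$ respects the orthogonal splitting $\mathbb{R}^m=V\oplus V^\perp$ and projects onto a group of isometries of $V^\perp$ whose orbits are still relatively dense (by cocompactness of $\Gamma$); if $V^\perp\ne 0$ this must be contradicted, and I would attempt it via a Zassenhaus-neighbourhood argument — producing a nilpotent normal subgroup of finite index and analysing discrete nilpotent subgroups of $\Isom(\mathbb{R}^m)$ — which I expect to be the most technical part of the whole proof. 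A heavier but shorter alternative: Milnor--Švarc shows $\Gamma$ has polynomial growth, so it is virtually nilpotent by Gromov's theorem, and a finite-index nilpotent subgroup of a crystallographic group is readily seen to consist of translations up to finite index once its torsion is bounded, again via the commutator estimate.
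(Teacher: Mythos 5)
The paper does not prove this statement: Bieberbach's first theorem is quoted only as a historical motivation for the notion of ``intersection theorems'' and is cited to Bieberbach's original 1912 paper, so there is no argument in the source for you to match. Judged on its own terms, your proposal is a correct partial argument with the main step missing, and you honestly identify the missing step yourself.

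What is correct: the reduction to finiteness of $\rho(\Gamma)$ is sound (lattices in the amenable group $\Isom(\mathbb{R}^m)$ are automatically uniform, and the fibration $T_m/\Lambda\to\Isom(\mathbb{R}^m)/\Lambda\to \mathrm{O}(m)$ shows $\Lambda=\Gamma\cap T_m$ is cocompact in $T_m$ once it has finite index in $\Gamma$); the bound $\lVert[A,B]-I\rVert\le 2\lVert A-I\rVert\,\lVert B-I\rVert$ in $\mathrm{O}(m)$ is right; and the computation that the iterated commutators $c_{k+1}=[\gamma,c_k]$ contract to $e$, hence are eventually trivial by discreteness, is the standard Frobenius--Bieberbach device.

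The gap: applying the contraction only with $\delta=(w,I)\in\Lambda$ is circular, since it proves that near-identity linear parts fix $V=\mathrm{span}(\Lambda)$ --- a statement with no content unless one already knows $V$ is large. As you say, $V=\mathbb{R}^m$ is precisely what Bieberbach proved and it is where all the work lies; you offer two avenues (a Zassenhaus-neighbourhood argument, or Milnor--\v{S}varc together with Gromov's polynomial-growth theorem) but carry out neither. The more economical way to finish along your own lines is to run the contraction with $\delta$ ranging over all of $\Gamma$, not only $\Lambda$: for $\gamma,\delta$ both in a small identity neighbourhood the contraction forces $[\rho(\gamma),\rho(\delta)]=I$, so the subgroup $\Gamma_0$ generated by $\Gamma$ near $e$ has abelian linear part, and cocompactness of $\Gamma$ makes $\Gamma_0$ of finite index. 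One then has to show that a cocompact discrete affine group with abelian orthogonal linear part has trivial linear part after a further finite-index pass, which still takes a genuine argument (this is the content your sketch waves at). The Gromov route is valid but wildly out of proportion with the statement.
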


Our main result of this type concerns approximate lattices of $S$-adic linear groups and is an amalgamation of three intersection results we prove below:

\begin{proposition}[Intersection with the nilpotent radical]\label{Proposition: Unipotent radical is hereditary, intro}
Let $\Lambda$ be a uniform approximate lattice in a solvable $S$-adic linear group $R$. Suppose that $\Lambda$ generates a Zariski-dense subgroup. Let $N$ be the maximal Zariski-closed Zariski-connected nilpotent group: 
\begin{enumerate}
\item $\Lambda^2 \cap \Rad(G)$ is a uniform approximate lattice in the radical $\Rad(G)$;
\item $\Lambda^2 \cap N$ is a uniform approximate lattice in the nilpotent radical $N$;
\item $\Lambda^2 \cap [G,R]$ is a uniform approximate lattice in the unipotent subgroup generated by all commutators of an element of $G$ and an element of $R$.
\end{enumerate}
\end{proposition}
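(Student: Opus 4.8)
The plan is to separate the automatic part of the conclusion from the substantive one and then to funnel all three assertions into a single statement about the images of $\Lambda$ in abelian quotients of $R$.

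\emph{Reductions.} Replacing $R$ by the Zariski closure of $\langle\Lambda\rangle$ and passing to a Zariski-connected finite-index subgroup — which leaves all three conclusions unaffected — I may assume $R$ is a Zariski-connected solvable $S$-adic algebraic group with unipotent radical $R_u$, and that $\Lambda$ is Zariski-dense. For \emph{any} subgroup $H\le R$ the set $\Lambda^2\cap H$ is automatically uniformly discrete, being contained in $\Lambda^2\subseteq F\Lambda$, a finite union of left translates of the uniformly discrete set $\Lambda$; so in each of (1)--(3) the only point is relative density of $\Lambda^2\cap H$ in $H$. I will invoke two standard facts about a closed normal subgroup $H\trianglelefteq R$: that $\Lambda^2\cap H$ and $\Lambda^{2k}\cap H$ are commensurable for every $k\ge1$, so that the exponent $2$ may freely be replaced by any even one; and the cocompactness criterion that $\Lambda^2\cap H$ is a uniform approximate lattice in $H$ exactly when the image of $\Lambda$ in $R/H$ is discrete, in which case that image is itself a uniform approximate lattice. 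The subgroups appearing in (1)--(3) all lie between $[R,R]$ and $R$, so their quotients are abelian, and the problem becomes: \emph{the image of $\Lambda$ in each such abelian algebraic quotient of $R$ is discrete.}

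\emph{The unipotent directions.} I first dispose of the part of these quotients lying inside $R_u$ by a commutator argument. For $\lambda\in\Lambda$ the map $\mu\mapsto\mu^{-1}\lambda^{-1}\mu\lambda$ carries the relatively dense set $\Lambda$ into $\Lambda^4$, and modulo the successive terms of the lower central series of $R$ it is, at each step, an affine image of $\Lambda$, hence relatively dense in the image of $\Ad(\lambda)-1$ on the relevant subquotient. Choosing finitely many $\lambda$'s whose such images Zariski-span $[R,R]$ and using that a finite sum of relatively dense subsets of subgroups summing onto $[R,R]$ is relatively dense, one gets that $\Lambda^{2k}\cap[R,R]$ — and, running the same argument up the central series of $R_u$, also $\Lambda^{2k}\cap[G,R]$ — is relatively dense. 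By the commensurability fact this settles statement (3) and reduces statement (2) (and, in the same way, (1)) to discreteness of the image of $\Lambda$ in the \emph{torus} $R/N$.

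\emph{The torus direction.} Up to isogeny $N$ is the common kernel of the finite family of characters $\chi_1,\dots,\chi_k\colon R\to\mathbb{G}_m$ occurring in the adjoint action on $R_u$, so (combining finitely many tails, since $\chi_i(\Lambda^2)$ is uniformly discrete once $\chi_i(\Lambda)$ is) it suffices to show each $\chi_i(\Lambda)$ is discrete in $\mathbb{G}_m(\mathbb{A}_S)=\prod_{v\in S}\mathbb{Q}_v^\times$. Here the lattice property is used in an essential way. The route I would take is: pass to a good model for $\Lambda$, i.e.\ a lattice $\Gamma$ in $R\times H$ with $H$ a solvable $S$-adic linear group and $\Lambda$ commensurable to $p_R(\Gamma\cap(R\times W_0))$; realise $\Gamma$ inside an appropriate $\mathbb{Z}_S$-structure preserved by it, so that the eigenvalues of elements of $\Gamma$ on the unipotent radical — in particular the values $\chi_i(\gamma)$ — are $S$-units; and invoke the $S$-unit theorem, which says precisely that $S$-units form a discrete subgroup of $\prod_{v\in S}\mathbb{Q}_v^\times$. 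Equivalently, and more classically, one transports the problem along the identification $N(R\times H)=N(R)\times N(H)$ and quotes Mostow's theorem that a lattice in a solvable group meets the nilradical in a lattice and projects to a lattice in the abelianisation. Reassembling through the cocompactness criterion then yields (1)--(3).

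\emph{Main obstacle.} The delicate step is exactly this last one: images of approximate lattices under quotient homomorphisms are in general merely relatively dense, not discrete — as one already sees for $\SL_2(\mathbb{Z}[1/p])$ in $\SL_2(\mathbb{R})$, or for $\langle 2,3\rangle$ in $\mathbb{R}^\times$ — so the proof cannot avoid using both that $\Lambda$ is a \emph{lattice} (not merely a relatively dense approximate subgroup) and the rigidity of the nilradical inside a solvable group; the extra $S$-adic complication is that $\prod_{v\in S}\mathbb{Q}_v^\times$ contains arbitrarily small compact subgroups, so discreteness cannot be detected on an archimedean factor and must be seen globally via the $S$-unit theorem. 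Producing the good model with a \emph{solvable $S$-adic} internal space, or supplying the equivalent arithmetic input, is the heaviest ingredient; the commutator argument for the unipotent directions and the bookkeeping with even exponents are routine once it is in place.
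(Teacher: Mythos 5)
Your high-level shape is right: each assertion does come down, via the cocompactness criterion (Proposition~\ref{Proposition: Intersection and projections approximate lattices w/ closed subgroups}), to discreteness of a quotient image, the unipotent directions are tractable by commutator tricks, and the surviving difficulty is the torus quotient $R/N$. You correctly identify that this last step cannot be free: it is exactly where the approximate lattice hypothesis must be converted into arithmetic rigidity. Where your proposal and the paper diverge is in how that conversion is carried out, and your route has a genuine gap that you acknowledge but do not fill.

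The route you sketch — a good model $\Gamma \subset R\times H$ with $H$ a \emph{solvable $S$-adic} linear group, then an invariant $\mathbb{Z}_S$-structure making eigenvalues $S$-units, then the $S$-unit theorem (equivalently, Mostow for solvable $S$-adic groups) — would amount to a Meyer-type arithmeticity theorem for approximate lattices in solvable $S$-adic groups. That is \emph{stronger} than anything the paper proves: Proposition~\ref{Proposition: Minimal commensurable approximate subgroup} only supplies a connected Lie group as internal space, not an $S$-adic one, and there is no analogue of Malcev's arithmeticity for lattices in solvable (as opposed to unipotent) groups at hand, so neither the $S$-adic internal space nor the integral structure on $\Gamma$ can be taken for granted. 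The paper is engineered precisely to sidestep this. Its route for part (2) is: use the centralizer lemma (Lemma~\ref{Lemma: Intersection with centraliser}) with $[\Gamma,\Gamma]$ to show $\Lambda^2 \cap C_R([R,R])$ is a uniform approximate lattice in a normal \emph{unipotent} subgroup $U'$; apply Meyer's theorem \emph{for unipotent $S$-adic groups} (Proposition~\ref{Proposition: Meyer's theorem $S$-adic unipotent group}) to $\Lambda^2\cap U'$, where the arithmeticity of lattices in unipotent groups is classical; then propagate the arithmetic structure to $R/C_R(U')$ through the conjugation action via Proposition~\ref{Proposition: Arithmeticity from action on subgroups}, and finish by a minimality argument to conclude $R=U$ in the torus-free case, with an induction on $Z_R$ otherwise. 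So the input you flag as ``the heaviest ingredient'' is replaced by a weaker (unipotent) input plus a propagation step.

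Two smaller remarks on ordering. First, the paper proves (2) \emph{before} (3), and the proof of Proposition~\ref{Lemma: Intersection with the derived subgroup} genuinely uses (2): the element $\xi\in\Lambda$ lying in $C_2\setminus C_1$ is produced by applying Meyer's theorem to $\Lambda^2\cap N$, which one must already know is an approximate lattice. Your plan to establish (3) first by commutators applied directly to $\Lambda$ can be made to work for the lower central series of $[R,R]$ (the maps $\mu\mapsto[\lambda,\mu]$ factor through the successive central quotients as you indicate), but it does not by itself give you the density statements involving the central torus $T_0$ that are baked into $N$, and it will not spare you the torus-quotient step; in the paper the two propositions are not interchangeable. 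Second, the reduction ``replacing $R$ by the Zariski closure of $\langle\Lambda\rangle$'' is only benign because of the Zariski-density hypothesis; it is worth noting, though harmless here, that part (1) of the amalgamated statement lives in the more general setting of Theorem~\ref{Theorem: Radical is hereditary} (arbitrary $S$-adic $G$) and is imported from \cite{machado2019goodmodels} rather than proved in this paper.
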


We prove in fact a number of other intersection theorems concerning other types of subgroups, see \S \ref{Section: Intersection theorems for approximate lattices}. We rely on two useful ideas: the use of partial `arithmeticity' theorems - in particular Proposition \ref{Proposition: Meyer's theorem $S$-adic unipotent group, intro}; we also exploit a number of `commutator tricks' inspired from Bieberbach's \cite{Bieberbach}.  A third line of ideas used in \cite{MR2373146, machado2019goodmodels} consists in harnessing amenability properties. We will not use these ideas here but we will build upon results proved in \cite{machado2019goodmodels} using them.

\subsection{Outline of the paper}

In \S \ref{Section: $S$-adic linear groups and Pisot numbers} we present the framework precisely, define $S$-adic linear groups and some of their properties. We also define there Pisot numbers and utilise them to exhibit examples of approximate lattices in $S$-adic linear groups that we call arithmetic. These examples have already been investigated in a number of previous works, see \cite{https://doi.org/10.48550/arxiv.2204.01496, hrushovski2020beyond, machado2020apphigherrank}. 

In \S \ref{Section: Extensions of theorems of Schreiber and Meyer to $S$-adic unipotent groups} we set out to prove Proposition \ref{Proposition: Meyer's theorem $S$-adic unipotent group, intro}. Our first step is to prove a generalisation of a theorem of Schreiber's in $S$-adic unipotent groups concerning the coarse structure of approximate subgroups. We then exploit a proof strategy developed in \cite{machado2019infinite} to extend Meyer's theorem. 

In \S \ref{Section: Intersection theorems for approximate lattices} we prove Proposition \ref{Proposition: Unipotent radical is hereditary, intro} along with a number of variations around the same theme. To that end we also introduce a second - wider - family of approximate lattices of arithmetic origin.  These methods also have consequences regarding the commensurator of an approximate lattice of an $S$-adic linear group. 

Finally, in an appendix (\S \ref{Appendix}) we collect results - some old and some new -  concerning approximate subgroups in a much more general framework. These results are used repeatedly throughout the rest of the paper.  
\subsection{Acknowledgements}

This material is based upon work supported by the National Science Foundation under Grant No. DMS-1926686.
\subsection{Notation}

Given subsets $X$ and $Y$ of $G$ define $XY:=\{xy : x \in X, y \in Y\}$, $X^0:=\{e\}$, $X^1:=X$ and $X^{n+1}=X^nX$ for all $n \geq 0$. Write also $\langle X \rangle$ the subgroup generated by $X$. Note that when $X=X^{-1}$, $\langle X \rangle = \bigcup_{n \geq 0} X^n$. We also define $X^y:=y^{-1}Xy$ and $^{y}X:=yXy^{-1}$. The subsets $X$ and $Y$ are \emph{commensurable} if there exists a finite subset $F \subset G$ such that $X \subset FY \cap YF$ and $Y \subset FX \cap XF$. We define the commensurator $\Comm_G(X)$ of $X$ in $G$ as the subgroup of those $g \in G$ such that $gXg^{-1}$ is commensurable with $X$.

\section{$S$-adic linear groups and Pisot numbers}\label{Section: $S$-adic linear groups and Pisot numbers}

In this section we present an attempt at a unified framework for the study of approximate lattices in $S$-adic linear groups in the spirit of the framework of Margulis' \cite{MR1090825} - although we restrict our attention to groups defined over fields of characteristic $0$. We point out that a similar attempt - with a somewhat different scope - was made in \cite{https://doi.org/10.48550/arxiv.2204.01496} and a large part of what we present here is inspired from it. We furthermore indicate that earlier attempts in the generality of semi-simple $S$-adic groups was made in \cite{hrushovski2020beyond, machado2020apphigherrank}. 

\subsection{$S$-adic linear groups}\label{Subsection: $S$-adic linear groups}

Let $S \subset S_{\mathbb{Q}}$ be a finite subset of inequivalent places of $\mathbb{Q}$. An $S$-adic linear group, or $S$-adic group, is any group $G$ such that there is a family of linear algebraic $(\mathbb{Q}_v)_{v \in S}$-groups $(\mathbf{G}_{v})_{v \in S}$ such that $G = \prod_{v \in S} \mathbf{G}_{v}(\mathbb{Q}_v)$. Given a $\mathbb{Q}$-linear group $\mathbf{G}$, the group of points $\mathbf{G}(\mathbb{A}_S)$ is a typical example of an $S$-adic group. 

If $X \subset G$ is any subset, we define its \emph{Zariski-closure} as the product of the Zariski-closures of the $p_v(X)$ where $p_v: G \rightarrow \mathbf{G}_{v}(\mathbb{Q}_v)$ denotes the natural projection i.e. we equip $G$ with the product topology of the Zariski-topologies arising from each $\mathbf{G}_{v}$. Since the fields $\mathbb{Q}_v$ are local, $G$ also comes equipped with a natural Hausdorff topology. All topological properties (closed, open, connected, etc) are understood in the latter topology unless they are preceded by the prefix 'Zariski'. In addition, $\overline{X}$ denotes the closure in the Hausdorff topology unless stated otherwise. 

If P is a property of algebraic groups, we will say that $G$ has P if and only if all $\mathbf{G}_v$ have P. For instance, we will say that $G$ is semi-simple (potentially with non-trivial centre) if and only if $\mathbf{G}_{v}$ is a semi-simple $k_v$-group for all $v \in S$. As in algebraic groups over fields, Zariski-closed subsets of $G$ satisfy a \emph{descending chain condition} i.e. every descending chain of Zariski-closed subsets eventually stabilises.  We write $C_G(X)$ the centraliser of $X$ in $G$. Note that 
 $$C_G(X)=\prod_{v \in S} C_{\mathbf{G}_{v}(\mathbb{Q}_v)}(p_v(X)).$$ 
Define $\Rad(G)$ as the maximal Zariski-connected soluble normal subgroup of $G$. We call $\Rad(G)$ the \emph{radical} of $G$. It satisfies $\prod_{v \in S} \Rad(\mathbf{G}_{v}(\mathbb{Q}_v))$. When $G$ is semi-simple (i.e. $\Rad(G)=\{e\}$), we define the $S$-rank of $G$ as 
$$ \rank_S(G) :=\sum_{v \in S} \rank_{K_v} (\mathbf{G}_{v}).$$ For a general introduction to such groups see \cite{MR1090825}.

We mention now several well-known facts that we will often use in the sequel. According to the Levi decomposition theorem \cite[VIII Theorem 4.3]{MR620024}, if $G$ is Zariski-connected we have that $G=R \ltimes U$ where $R$ is reductive and $U$ is unipotent. Moreover, there are $S \subset R$ semi-simple and $T \subset R$ a product of tori over the fields $\mathbb{Q}_v$ that centralise one another and such that the map $S \times T \rightarrow R$ has finite kernel (\cite[\S 22.o]{MR3729270}). We will call a \emph{reductive} (resp. \emph{semi-simple}) \emph{Levi subgroup} of $G$ any Zariski-closed subgroup that projects isomorphically to $R$ (resp. $S$). Levi subgroups are characterised as the maximal reductive (resp. semi-simple) subgroups of $G$. Moreover, any two reductive (resp. semi-simple) Levi subgroups are conjugate to one another via an element of $U$. 

For every $v \in S$ take $\mathbf{H}_v \subset \mathbf{G}_v$ algebraic $K_v$-subgroup. If $H$ denotes $\prod_{v \in S} \mathbf{H}_v(K_v)$, then the natural map $G/H \rightarrow \prod_{v \in S} (\mathbf{G}_v/\mathbf{H}_v)(K_v)$ is injective and has finite index image \cite{BorelSerre}. When $G$ and $H$ are assumed unipotent, it becomes a continuous isomorphism. This can be seen through the equivalence between unipotent $S$-adic groups and their Lie algebras that we use a number of times, see \cite[IV.2.4]{zbMATH03670601} for references and \cite[II]{raghunathan1972discrete} for the case of real groups. We refer to \cite{MR3729270, springer2010linear} for background on algebraic groups and to \cite{raghunathan1972discrete} for background on Lie groups and their lattices.
 \subsection{Pisot--Vijayaraghavan--Salem numbers of a number field}\label{Subsubsection: Pisot--Vijayaraghavan--Salem numbers of a number field}
 
 Let $K$ be a number field. Write $S_K$ the set of all equivalence classes of places of $K$. For any $v \in S_K$ let $|\cdot|_v$ denote an absolute value arising from $v$. The completion of $K$ with respect to $|\cdot|_v$ will be denoted $K_v$. Note that the space obtained is independent of the choice of $|\cdot |_v$. When $v$ is non-Archimedean let us denote by $O_v$ the valuation ring of $K_v$. We write $\mathbb{A}_K = \prod_{v \in S_K}'K_v$ the ring of adeles with the usual topology. Here, $\prod'$ denotes the restricted product with respect to the valuation rings $O_v$. If $S$ denotes a subset of $S_K$, then we write moreover $\mathbb{A}_{K,S}=\prod_{v \in S}'K_v$ and $\mathbb{A}_K^S = \prod_{v \notin S}'K_v$. In particular, $\mathbb{A}_K= \mathbb{A}_{K,S} \times \mathbb{A}_K^S$. 

We also define the set $\mathcal{O}_{K,S}$ as the subset of those elements $x$ of $K$ such that $|x|_v \leq 1$ for all $v \notin S$. Again, $\mathcal{O}_{K,S}$ depends only on $S$ and not the choice of absolute values $(| \cdot |_v)_{v \in S}$. When $S$ contains all the Archimedean places of $S_K$, the subset $\mathcal{O}_{K,S}$ is the so-called \emph{ring of $S$-integers}. If, moreover, $S$ is the set of all Archimedean places of $K$, then $\mathcal{O}_{K,S}$ is simply denoted $\mathcal{O}_K$ and is the ring of algebraic integers of $K$. For this and more see for instance \cite{NeukirchAlgebraicNumberTheory}.

Another case of interest is when $S$ consists of a single valuation $v$. Then the subsets $\mathcal{O}_{K,S}$ are the \emph{Pisot--Vijayaraghavan--Salem numbers} of $K \subset K_v$ and they admit a fascinating characterisation in terms of a combination of an additive and a multiplicative conditions. 

\begin{theorem}[Meyer's sum-product phenomenon, \S II.13 \cite{meyer1972algebraic}]\label{Theorem: Meyer's sum-product}
Let $\Lambda$ be a subset of a local field $k$. Suppose that $\Lambda$ is a uniformly discrete approximate subgroup of $k$ (seen as an additive group) which is moreover stable under multiplication i.e. $\Lambda\Lambda \subset \Lambda$. There is a global field $K \subset k$ such that $\Lambda \subset \mathcal{O}_{K,v}$ where $v$ is the valuation inherited from the inclusion $K \subset k$. 
\end{theorem}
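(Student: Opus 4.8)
The plan is to exploit the sum-product structure of $\Lambda$ to recover a number field. First I would set $K_0 := \mathbb{Q}(\Lambda)$, the subfield of $k$ generated by $\Lambda$, and observe that since $\Lambda\Lambda \subset \Lambda$ and $\Lambda = -\Lambda$, the additive group $\langle \Lambda \rangle = \bigcup_n (\Lambda + \cdots + \Lambda)$ is actually a subring of $k$; call it $R$. The key point to extract is that $R$ is finitely generated as a ring. Indeed, because $\Lambda$ is an approximate subgroup there is a finite $F \subset k$ with $\Lambda + \Lambda \subset F + \Lambda$; iterating, every $n$-fold sum $n\Lambda$ lies in $(F + F + \cdots) + \Lambda \subset \langle F \rangle_{\mathrm{add}} + \Lambda$, and since $\Lambda\Lambda\subset\Lambda$ one checks that the subring generated by $\Lambda$ is contained in $\langle F \cup \Lambda \rangle_{\mathrm{subring}}$ with $F$ finite — more carefully, one shows $R$ is contained in a finitely generated $\mathbb{Z}$-module after localising, or directly that $R$ is a finitely generated $\mathbb{Z}$-algebra.

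Next I would use uniform discreteness to control the size of $R$. A finitely generated subring $R$ of a local field $k$ of characteristic $0$ that is a \emph{uniformly discrete} subset must have Krull dimension at most $1$ and transcendence degree $0$ over $\mathbb{Q}$: if $R$ contained an element $t$ transcendental over $\mathbb{Q}$, or two multiplicatively independent elements generating an unbounded set of valuations, then the powers or products would accumulate, contradicting uniform discreteness (here one uses that $\Lambda$, hence a cofinal part of $R$ built from bounded sums of $\Lambda$, stays uniformly discrete, and combines this with the fact that a ring of positive transcendence degree, or with infinitely many archimedean-type embeddings, cannot be discretely embedded in a single local field). Thus $R$ is an order in a number field $K := \mathrm{Frac}(R) \subset k$, finite over $\mathbb{Q}$. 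Let $v$ be the place of $K$ induced by the embedding $K \hookrightarrow k$.

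Then I would identify $\mathcal{O}_{K,v}$ as the target. By construction $R \subset K$ and $R$ is finitely generated over $\mathbb{Z}$, so there is a nonzero integer $d$ with $dR \subset \mathcal{O}_K$; moreover every element of $\Lambda$, being a bounded subset of $k$, has bounded $v$-absolute value, while for all other finite places $w \neq v$ of $K$ the uniform discreteness forces $|x|_w \le 1$ for $x \in \Lambda$ — otherwise, choosing $x \in \Lambda$ with $|x|_w > 1$, the sequence of powers $x^n \in \Lambda$ would have $|x^n|_w \to \infty$ but would need to be balanced at other places, and a product-formula / pigeonhole argument shows this contradicts $\Lambda$ being a uniformly discrete approximate subgroup. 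Hence $\Lambda \subset \{x \in K : |x|_w \le 1 \text{ for all } w \nmid v\} = \mathcal{O}_{K,v}$ (after possibly replacing $K$ by the global field it generates, which by characteristic $0$ is a number field), which is the desired conclusion.

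\textbf{Main obstacle.} The delicate step is the second one: rigorously deducing from "$\Lambda$ uniformly discrete approximate subgroup, $\Lambda\Lambda \subset \Lambda$" that the ring generated by $\Lambda$ is finitely generated \emph{and} zero-dimensional of transcendence degree $0$, with all auxiliary places bounded. This is exactly Meyer's "sum-product phenomenon" and I expect the proof to run through a careful analysis of valuations: one must rule out transcendental elements and multiplicatively "big" elements simultaneously, which amounts to showing that a uniformly discrete subring of a local field is contained in the valuation ring at $v$ of a number field. Controlling the interaction between the additive approximate-group condition (which gives finiteness of the relevant module up to the finite set $F$) and the multiplicative closure (which gives the ring structure and forces the powers of any element to stay in $\Lambda$) is where the real work lies; the number-theoretic packaging afterwards is standard.
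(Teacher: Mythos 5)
The paper itself does not prove this theorem; it is cited verbatim from Meyer's book (\S II.13 of \cite{meyer1972algebraic}), so there is no proof in the paper to compare against. Judged on its own terms, your sketch identifies the right landmarks — pass to the generated subring, argue algebraicity, then control the auxiliary places — but both load-bearing steps are left unproved, and one key claim along the way is false as stated.

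First, the claim that $R=\langle\Lambda\rangle$ is a finitely generated $\mathbb{Z}$-algebra is not established. The containment $R \subset \langle F\rangle_{\mathrm{add}}+\Lambda$ does not bound $R$, since $\Lambda$ is infinite, and the parenthetical ``more carefully, one shows $\dots$'' is exactly the nontrivial content: it is close to equivalent to the theorem itself. Second, and more seriously, the transcendence-degree argument is applied to ``a finitely generated subring $R$ $\ldots$ that is a \emph{uniformly discrete} subset.'' But $R$ is generally not uniformly discrete; it is typically dense. Take $k=\mathbb{R}$, $K=\mathbb{Q}(\sqrt2)$, and $\Lambda=\{a+b\sqrt2\in\mathbb{Z}[\sqrt2] : |a-b\sqrt2|\le 1\}$. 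This $\Lambda$ satisfies every hypothesis of the theorem, yet $R=\langle\Lambda\rangle=\mathbb{Z}[\sqrt2]$ is dense in $\mathbb{R}$. Your hedge (``a cofinal part of $R$ built from bounded sums of $\Lambda$'') does not produce a uniformly discrete ring to which a Krull-dimension or transcendence-degree argument can be applied; the algebraicity of an individual $\lambda\in\Lambda$ has to be extracted from the interaction between the finite covering property $\Lambda+\Lambda\subset F+\Lambda$ and the multiplicative invariance $\lambda\Lambda\subset\Lambda$, not from a dimension count on $R$. Finally, the bound $|x|_w\le 1$ at every other place $w$ of $K$ is asserted via ``a product-formula / pigeonhole argument'' but never carried out; this is the heart of the Pisot--Vijayaraghavan--Salem characterization and is not routine. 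What is actually needed — and what your outline gestures at without supplying — is a finitely generated abelian group preserved (up to finite index) by multiplication by the elements of $\Lambda$, so that Cayley--Hamilton gives algebraicity and the finiteness of the cokernel controls the remaining absolute values. Producing that finite-rank module from uniform discreteness and the approximate-subgroup property is the substance of Meyer's argument and is absent here.
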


Theorem \ref{Theorem: Meyer's sum-product} can be seen as the natural generalisation of the fact that a discrete subring (not necessarily unital) of $\mathbb{R}$ is equal to $n\mathbb{Z}$ for some $n \in \mathbb{Z}$. More generally, when $S$ is arbitrary, $\mathcal{O}_{K,S}$ is stable under product. Also, the diagonal embedding of $\mathcal{O}_{K,S}$ in $\mathbb{A}_{K}^S$ is relatively compact. Since $K$ embedded diagonally in $\mathbb{A}_K$ forms a uniform lattice, the diagonal embedding of $\mathcal{O}_{K,S}$ in $\mathbb{A}_{K,S}$ is uniformly discrete. Moreover, we have that $\mathcal{O}_{K,S}$ is a model set coming from the cut-and-project scheme $(\mathbb{A}_{K,S}, \mathbb{A}_{K}^{S}, K)$ and, hence, a uniform approximate lattice in $\mathbb{A}_{K,S}$ \cite[Prop. 2.13]{bjorklund2016approximate}. In particular, $\mathcal{O}_{K,S}$ is an approximate subgroup stable under products (i.e. an \emph{approximate ring}, see \cite{krupinski2023locally}). This approximate structure is reflected in the following:

\begin{lemma}\label{Lemma: Pisot approximate rings and polynomials}
Let $K, S$ be as above and let $P \in K[X]$. Then:
\begin{enumerate}
\item $P(\mathcal{O}_{K,S})$ is contained in finitely many additive translates of $\mathcal{O}_{K,S}$;
\item if, in addition, $P(0) =0$, then there is $\Lambda$ contained in and commensurable with $\mathcal{O}_{K,S}$ such that $P(\Lambda) \subset \mathcal{O}_{K,S}$. Moreover, $\Lambda$ is a model set associated with $( \mathbb{A}_{K,S}, \mathbb{A}_K^S, K)$.
\end{enumerate} 
\end{lemma}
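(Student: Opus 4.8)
The plan is to realise both sets as model sets for the cut-and-project scheme $(\mathbb{A}_{K,S},\mathbb{A}_K^S,K)$ and then to compare their windows. I identify $K$ with its diagonal image in $\mathbb{A}_K=\mathbb{A}_{K,S}\times\mathbb{A}_K^S$; this image is discrete and cocompact, the projection $p_{\mathbb{A}_{K,S}}$ is injective on it (as $S\neq\emptyset$), and $p_{\mathbb{A}_K^S}(K)$ is dense in $\mathbb{A}_K^S$ by strong approximation for $\mathbb{G}_a$. For a family $c=(c_v)_{v\notin S}$ of positive reals with $c_v=1$ for all but finitely many $v$, set $W(c):=\prod_{v\notin S}\{z\in K_v:|z|_v\leq c_v\}\subseteq\mathbb{A}_K^S$. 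Since all but finitely many factors equal $O_v$, each factor is compact, and each factor is open at the non-Archimedean places while being a closed ball at the finitely many Archimedean ones, $W(c)$ is a symmetric, relatively compact neighbourhood of $0$, i.e. a legitimate window; the associated model set is exactly $L(c):=\{y\in K:|y|_v\leq c_v\text{ for all }v\notin S\}$, and $\mathcal{O}_{K,S}=L(\mathbf{1})$ with $\mathbf{1}=(1)_{v\notin S}$.

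The technical core is the following comparison: for two such families $c,c'$ the model sets $L(c)$ and $L(c')$ are commensurable, and if $c_v\leq c'_v$ for every $v$ then moreover $L(c)\subseteq L(c')$. To see this, observe first that for $\gamma\in K$, writing $h=p_{\mathbb{A}_K^S}(\gamma)$, one has $p_{\mathbb{A}_{K,S}}\big(K\cap(\mathbb{A}_{K,S}\times(h+W(c')))\big)=p_{\mathbb{A}_{K,S}}(\gamma)+L(c')$, a genuine $K$-translate of $L(c')$, simply because subtracting the lattice point $\gamma$ identifies the two sets of points. Next, since $W(c')$ is a compact neighbourhood of $0$ and $p_{\mathbb{A}_K^S}(K)$ is dense, a standard compactness argument covers the compact set $W(c)$ by finitely many translates $p_{\mathbb{A}_K^S}(\gamma_j)+W(c')$ with $\gamma_j\in K$. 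Combining the two facts gives $L(c)\subseteq\bigcup_j\big(p_{\mathbb{A}_{K,S}}(\gamma_j)+L(c')\big)$, and the symmetric inclusion follows by swapping the roles of $c$ and $c'$ (using that $W(c)$ is also a neighbourhood of $0$).

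Granting this, both parts are quick. Write $P=\sum_{i=0}^d a_iX^i$. For $x\in\mathcal{O}_{K,S}$ we have $|x|_v\leq1$ for all $v\notin S$, whence $|P(x)|_v\leq c_v$, where $c_v:=\max(1,\max_i|a_i|_v)$ at non-Archimedean $v$ and $c_v:=\max(1,\sum_i|a_i|_v)$ at Archimedean $v$; as $c_v=1$ for all but finitely many $v$ this gives $P(\mathcal{O}_{K,S})\subseteq L(c)$, and since $c_v\geq1$ the comparison lemma shows that $L(c)$ — hence also $P(\mathcal{O}_{K,S})$ — is contained in finitely many additive translates of $\mathcal{O}_{K,S}=L(\mathbf{1})$. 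This is (1). For (2), use $P(0)=0$ to discard the constant term, keep the same $c_v$, and let $T$ be the finite set of places $v\notin S$ with $c_v>1$. Pick $\varepsilon_v\in(0,1]$ with $\varepsilon_v c_v\leq1$ for $v\in T$ (a suitable power of a uniformiser at non-Archimedean places, and $1/c_v$ at the Archimedean ones) and $\varepsilon_v=1$ for $v\notin T$, and set $\Lambda:=L(\varepsilon)$ with $\varepsilon=(\varepsilon_v)_{v\notin S}$. Then $\Lambda$ is a model set for $(\mathbb{A}_{K,S},\mathbb{A}_K^S,K)$ by the first paragraph, it is contained in $\mathcal{O}_{K,S}$ because $\varepsilon_v\leq1$, and it is commensurable with $\mathcal{O}_{K,S}$ by the comparison lemma. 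Finally, for $x\in\Lambda$ and $v\notin S$: if $v\notin T$ then $|x|_v\leq1=c_v$ forces $|P(x)|_v\leq1$; if $v\in T$ then, since $|x|_v\leq\varepsilon_v\leq1$ and $P$ has no constant term, every monomial $a_ix^i$ with $i\geq1$ satisfies $|a_ix^i|_v\leq|a_i|_v\varepsilon_v$, giving $|P(x)|_v\leq\varepsilon_v c_v\leq1$. Hence $P(\Lambda)\subseteq\mathcal{O}_{K,S}$.

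I expect the only genuinely delicate point to be the comparison lemma of the second paragraph: one must check that the windows $W(c)$ really are symmetric compact neighbourhoods of the identity (a short case analysis over Archimedean versus non-Archimedean places, which notably does not require $\mathcal{O}_{K,S}$ to be an honest ring), and one must invoke the density of $K$ in the internal space $\mathbb{A}_K^S$ to make the covering translates rational. The remaining ingredients are the elementary ultrametric and triangle-inequality bounds on $|P(x)|_v$ indicated above.
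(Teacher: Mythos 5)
Your proof is correct and follows essentially the same route as the paper: both realise $\mathcal{O}_{K,S}$ via the cut-and-project scheme $(\mathbb{A}_{K,S},\mathbb{A}_K^S,K)$ and reduce everything to showing that the image of $P(\mathcal{O}_{K,S})$ (resp.\ the preimage condition for part (2)) is controlled by a relatively compact window in the internal space $\mathbb{A}_K^S$. The only difference is presentational: where the paper invokes continuity of $P$ on the locally compact $K$-algebra $\mathbb{A}_K^S$ together with a general pull-back/commensurability lemma, you unpack this into explicit place-by-place bounds $|P(x)|_v\leq c_v$ and an explicit covering argument showing two windows yield commensurable model sets.
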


\begin{proof}
Consider the diagonal embedding $\iota: K \rightarrow \mathbb{A}_K^S$. The set $\mathcal{O}_{K,S}$ is the inverse image by $\iota$ of the subset $\{(x_v)_{v \notin S} : |x_v|_v \leq 1\}$. By Lemma \ref{Lemma: Pull-back of commensurable approximate subgroups}, it thus suffices to show that $\iota\left(P(\mathcal{O}_{K,S})\right)$ is relatively compact in order to prove (1). But $\mathcal{O}_{K,S}$ is a relatively compact subset and $P$ seen as a map from $\mathbb{A}_K^S$ to itself is continuous (recall that $\mathbb{A}_K^S$ is a locally compact $K$-algebra). So, indeed, $\iota\left(P(\mathcal{O}_{K,S})\right)$ is relatively compact. This proves (1). Similarly, (2) is a simple consequence of the continuity of $P$ over the locally compact algebra $\mathbb{A}_K^S$.
\end{proof}

When $K = \mathbb{Q}$, we further simplify the notation and write $\mathbb{A}_{S}=\mathbb{A}_{K,S}$, $\mathbb{A}^{S}=\mathbb{A}_{K}^{S}$ and $\mathcal{O}_{K,S}=\mathbb{Z}_S$.
\subsection{Matrices with Pisot entries, Borel--Harish-Chandra and Godement's criterion}\label{Subsection: Matrices with Pisot entries and the Borel--Harish-Chandra theorem for approximate lattices}
The subsets $\mathcal{O}_{K,S}$ introduced in \S \ref{Subsubsection: Pisot--Vijayaraghavan--Salem numbers of a number field} allow us to build a rich family of uniformly discrete approximate subgroups of $S$-adic algebraic groups that happen to be model sets in a great number of situations. We follow here the same approach as \cite{https://doi.org/10.48550/arxiv.2204.01496}.

\begin{definition}[$\mathcal{O}_{K,S}$ points of a linear algebraic group]
Let $K$ be a number field and $S$ a set of inequivalent places. If $\mathbf{G} \subset \GL_n$ is a $K$-subgroup, then define
$$\mathbf{G}(\mathcal{O}_{K,S}):=\{g \in \mathbf{G}(K) : g - \id ,g^{-1} - \id \in \mathbf{M}_{n\times n}\left(\mathcal{O}_{K,S}\right) \},$$
where $\mathbf{M}_{n\times n}\left(\mathcal{O}_{K,S}\right)$ denotes the set of $n \times n$ matrices with entries in $\mathcal{O}_{K,S}$. 
\end{definition}

 When $S=S_1 \sqcup S_2$ recall that $$\mathbf{G}(\mathbb{A}_{K,S})= \mathbf{G}(\mathbb{A}_{K,S_1}) \times \mathbf{G}(\mathbb{A}_{K,S_2}).$$ 
 If $S_{\infty}$ denotes the set of infinite places of $K$, then the diagonal embedding 
 $$\mathbf{G}(\mathcal{O}_{K,S \cup S_{\infty}}) \subset \mathbf{G}(\mathbb{A}_{K,S \cup S_{\infty}})$$
makes $\mathbf{G}(\mathcal{O}_{K,S \cup S_{\infty}})$ into a discrete subgroup. When $\mathbf{G}$ is moreover known to have no $K$-characters, $\mathbf{G}(\mathcal{O}_{K,S \cup S_{\infty}})$ is a lattice in $\mathbf{G}(\mathbb{A}_{K,S \cup S^{\infty}})$ by the Borel--Harish-Chandra theorem \cite{BorelHarish-Chandra}. By applying the general cut-and-project construction and noticing that $\Lambda:=\mathbf{G}(\mathcal{O}_{K,S})$ is a model set associated with the cut-and-project scheme $(\mathbf{G}(\mathbb{A}_{K,S}), \mathbf{G}(\mathbb{A}_{K,S_{\infty}\setminus S}), \mathbf{G}(\mathcal{O}_{K,S \cup S_{\infty}}))$ we have: 

\begin{proposition}
Let $K$ be a number field and $S$ be a set of inequivalent places. Let $\mathbf{G} \subset \GL_n$ be a $K$-subgroup. Then $\mathbf{G}(\mathcal{O}_{K,S}) \subset \mathbf{G}(\mathbb{A}_{K,S})$ is an approximate lattice if and only if $\mathbf{G}$ has no non-trivial $K$-characters. Moreover, if $\mathbf{G}$ is $K$-anisotropic, then $\mathbf{G}(\mathcal{O}_{K,S})$ is uniform. 
\end{proposition}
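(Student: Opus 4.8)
The plan is to prove the two implications separately; the forward direction essentially repackages the discussion preceding the statement, so I begin with it. \emph{If $\mathbf{G}$ has no non-trivial $K$-character}, write $S_{\infty}$ for the Archimedean places of $K$. By the Borel--Harish-Chandra theorem, $\mathbf{G}(\mathcal{O}_{K,S\cup S_{\infty}})$ is a lattice in $\mathbf{G}(\mathbb{A}_{K,S\cup S_{\infty}}) = \mathbf{G}(\mathbb{A}_{K,S}) \times \mathbf{G}(\mathbb{A}_{K,S_{\infty}\setminus S})$, so the triple $(\mathbf{G}(\mathbb{A}_{K,S}), \mathbf{G}(\mathbb{A}_{K,S_{\infty}\setminus S}), \mathbf{G}(\mathcal{O}_{K,S\cup S_{\infty}}))$ is a genuine cut-and-project scheme. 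Since $\mathbf{G}(\mathcal{O}_{K,S})$ was identified above as the model set attached to it (with window the set of $h$ such that $h-\id$ and $h^{-1}-\id$ have all entries of absolute value at most $1$), it is an approximate lattice. If moreover $\mathbf{G}$ is $K$-anisotropic, Godement's criterion makes this lattice cocompact, and a model set built from a uniform lattice is a uniform approximate lattice; this is the ``moreover'' clause.

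\emph{Conversely}, suppose $\mathbf{G}$ has a non-trivial $K$-character $\chi$ (so necessarily $S \neq \emptyset$). After replacing $\mathbf{G}$ by $\mathbf{G}^{\circ}$ — which replaces $\Lambda := \mathbf{G}(\mathcal{O}_{K,S})$ by a finite-index subgroup and so does not affect whether it is an approximate lattice — I may assume $\mathbf{G}$ connected and $\chi$ surjective onto $\GL_1$. Consider the continuous homomorphism $\bar\chi \colon G := \mathbf{G}(\mathbb{A}_{K,S}) \to \mathbb{R}_{>0}$, $(g_v)_{v\in S} \mapsto \prod_{v \in S} |\chi(g_v)|_v$. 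The key contrast I would establish is: (i) $\bar\chi(\Lambda)$ is relatively compact; (ii) $\bar\chi(G)$ is not. For (i), each $\gamma \in \Lambda$ and its inverse have matrix entries in $\mathcal{O}_{K,S}$, which embeds relatively compactly in $\mathbb{A}_K^S$, so $\Lambda$ is relatively compact in $\mathbf{G}(\mathbb{A}_K^S)$ and hence $\chi(\Lambda)$ is relatively compact in $\GL_1(\mathbb{A}_K^S)$, i.e. $\prod_{v \notin S} |\chi(\gamma)|_v$ stays in a fixed compact subinterval of $\mathbb{R}_{>0}$; since $\chi(\gamma) \in K^{\times}$, the product formula gives $\bar\chi(\gamma) = (\prod_{v \notin S} |\chi(\gamma)|_v)^{-1}$, proving (i) (alternatively one feeds $\chi$ and $\chi^{-1}$ into Lemma \ref{Lemma: Pisot approximate rings and polynomials}(1)). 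For (ii), fix $v_0 \in S$; as $\chi$ is a surjective morphism $\mathbf{G}\to\GL_1$ over $K_{v_0}$, the map $\chi \colon \mathbf{G}(K_{v_0}) \to K_{v_0}^{\times}$ is submersive, so its image is an open — hence non-compact — subgroup of $K_{v_0}^{\times}$, whence $|\chi|_{v_0}(\mathbf{G}(K_{v_0})) \subset \bar\chi(G)$ is an unbounded subgroup of $\mathbb{R}_{>0}$.

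The proof is then completed by the principle that an approximate lattice in a second-countable locally compact group $G$ cannot have relatively compact image under a continuous homomorphism $G \to \mathbb{R}_{>0}$ with non-relatively-compact image: by (i) and (ii) this rules out $\Lambda$ being an approximate lattice. For a \emph{uniform} approximate lattice the principle is immediate — if $\Lambda\mathcal{K} = G$ with $\mathcal{K}$ relatively compact, then $\bar\chi(G) = \bar\chi(\Lambda)\bar\chi(\mathcal{K})$ is relatively compact — so already $\mathbf{G}(\mathcal{O}_{K,S})$ is never a uniform approximate lattice once $\mathbf{G}$ has a non-trivial $K$-character. I expect the general, non-uniform, case of the principle to be the main obstacle: here one must exploit the finite-covolume hypothesis $\Lambda\mathcal{F} = G$, $\mu_G(\mathcal{F}) < \infty$, together with the fact that $\Lambda$ is an approximate subgroup — and not merely a discrete subset of finite covolume — arguing e.g. by disintegrating a Haar measure of $G$ along $\ker\bar\chi$ to contradict (ii). I would isolate this as a standalone lemma, or deduce it from the general results on approximate subgroups collected in the appendix, rather than carry out the bookkeeping inline.
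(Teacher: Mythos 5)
Your forward direction matches the paper's proof exactly. The converse takes a genuinely different route: the paper invokes its later Lemma \ref{Lemma: Levi decomposition of GAAS} to reduce to $\mathbf{G}_m$ and then applies the product formula in $\mathbb{A}_{K,S}^{\times}$, while you work directly with an arbitrary non-trivial character $\chi$ of $\mathbf{G}$ and the derived homomorphism $\bar\chi \colon \mathbf{G}(\mathbb{A}_{K,S}) \to \mathbb{R}_{>0}$. Both reductions hinge on the same product-formula computation, and your direct route avoids having to cite the GAAS decomposition (which is proved several sections later and relies on machinery logically downstream of this proposition). One small inaccuracy on the way to (ii): for non-Archimedean $v_0$ an open subgroup of $K_{v_0}^{\times}$ can perfectly well be compact (e.g. $O_{v_0}^{\times}$), so ``open hence non-compact'' is not valid; what you need is that the image of $\mathbf{G}(K_{v_0}) \to K_{v_0}^{\times}$ has \emph{finite index}, which follows from the Borel--Serre finiteness result the paper cites in \S\ref{Subsection: $S$-adic linear groups}, and finite index in $K_{v_0}^{\times}$ does force non-compactness.

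The one real gap is precisely the one you flag yourself: the ``principle'' that an approximate lattice cannot have relatively compact image under a continuous homomorphism onto a non-compact target. This is not established by the appendix results you gesture at (Proposition \ref{Proposition: Intersection and projections approximate lattices w/ closed subgroups} relates uniform discreteness of the projection to the intersection being an approximate lattice, and does not say a relatively compact projection is impossible), and the naive disintegration along $\ker\bar\chi$ stalls because nothing \emph{a priori} gives a uniform positive lower bound on the fiber measures of $\mathcal F$. The paper's detour through $\mathbf{G}_m$ is not just cosmetic here: it lands the argument in an abelian (hence amenable) ambient group, where approximate lattices are laminar and therefore commensurable with model sets and, in particular, relatively dense — so the easy uniform-case argument you wrote applies, and $\mathbf{G}_m(\mathcal{O}_{K,S}) \subset \{x : \prod_{v\in S}|x|_v \text{ bounded}\}$ immediately contradicts relative density. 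In the ambient $\mathbf{G}(\mathbb{A}_{K,S})$, which is typically non-amenable, no such laminarity shortcut is available, so your direct route trades the structural lemma for a measure-theoretic one that is genuinely harder to fill in; either supply that lemma or route through an amenable (abelian or solvable) piece of $\mathbf{G}$ as the paper does.
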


\begin{proof}
According to the paragraph preceding the statement, if $\mathbf{G}$ has no $K$-characters, we already have that $\mathbf{G}(\mathcal{O}_{K,S})$ is a model set,  hence an approximate lattice by \cite{bjorklund2016aperiodic}. If $\mathbf{G}$ is moreover $K$-anisotropic, then $\mathbf{G}(\mathcal{O}_{K,S \cup S_{\infty}})$ is in fact a uniform lattice in $\mathbf{G}(\mathbb{A}_{K,S \cup S^{\infty}})$. So $\mathbf{G}(\mathcal{O}_{K,S})$ is a uniform approximate lattice, see e.g. \cite[Prop. 2.13]{bjorklund2016approximate}.

It remains to prove the converse statement. Suppose that $\mathbf{G}(\mathcal{O}_{K,S})$ is an approximate lattice. We will see (Lemma \ref{Lemma: Levi decomposition of GAAS}) that it suffices to show that $\mathbf{G}_m(\mathcal{O}_{K,S})$ is never an approximate lattice in $\mathbf{G}_m(\mathbb{A}_{K,S})$ where $\mathbf{G}_m$ stands for the multiplicative group. But if $x \in \mathbf{G}_m(\mathcal{O}_{K,S})$, then the product formula implies $\prod_{v \in S_K}|x|_v = 1$ where $S_K$ denotes the set of all the places of $K$. But $\prod_{v \notin S}|x|_v \leq 1$ by assumption. So $\prod_{v \in S}|x|_v \geq 1$. Since $x^{-1} \in \mathbf{G}_m(\mathcal{O}_{K,S})$ as well, $\prod_{v \in S}|x|_v = 1$. So $\mathbf{G}_m(\mathcal{O}_{K,S})$ does not have finite co-volume.
\end{proof}

If $S'$ denotes $S_{\infty}\setminus S$, then the map $\mathbf{G}(\mathcal{O}_{K,S \cup S'}) \rightarrow \mathbf{G}(\mathbb{A}_{K,S'})$ is a good model of $\mathbf{G}(\mathcal{O}_{K,S})$ and $\mathbf{G}(\mathbb{A}_{K,S'})$ has finitely many connected components. By Proposition \ref{Proposition: Minimal commensurable approximate subgroup}, for any approximate subgroup $\Lambda$ commensurable with $\mathbf{G}(\mathcal{O}_{K,S})$ contained in $\langle \mathbf{G}(\mathcal{O}_{K,S}) \rangle$, we have that $\langle \Lambda \rangle$ is commensurable with $\mathbf{G}(\mathcal{O}_{K,S \cup S'})$.

\section{Extensions of theorems of Schreiber and Meyer to $S$-adic nilpotent groups}\label{Section: Extensions of theorems of Schreiber and Meyer to $S$-adic unipotent groups}

This section is concerned with extending two cornerstone results of approximate subgroups of Euclidean spaces, namely Schreiber's theorem (see \cite{fish2019extensions, schreiber1973approximations}) and Meyer's theorem (see \cite{meyer1972algebraic}). 
\subsection{An extension of Schreiber's theorem}
Schreiber's result asserts that an approximate subgroup of a Euclidean space is \emph{coarsely} the same as a vector subspace \cite{schreiber1973approximations}. We start this subsection by showing a generalisation of Schreiber's theorem in unipotent $S$-adic groups.

\begin{proposition}\label{Proposition: Schreiber's theorem for S-adic unipotent groups}
Let $\Lambda$ be an approximate subgroup of a unipotent $S$-adic linear group $U$. Then there are a unique unipotent $S$-adic linear subgroup $U' \subset U$ and a compact subset $K \subset U$ such that $\Lambda \subset KU'$ and $U' \subset K\Lambda.$
\end{proposition}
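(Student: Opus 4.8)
The plan is to prove both existence and uniqueness by induction on $\dim U$, with the trivial case $U=\{e\}$, and the case $U$ abelian handled by (an $S$-adic form of) Schreiber's theorem. In the abelian case $U$ is a finite product of the vector groups $\mathbf G_v(\mathbb Q_v)\cong\mathbb Q_v^{n_v}$. For each place $v$ the projection $p_v(\Lambda)$ is an approximate subgroup of $\mathbb Q_v^{n_v}$; applying Schreiber's theorem at the Archimedean place \cite{schreiber1973approximations, fish2019extensions} and its non-Archimedean counterpart at the remaining places --- the latter comparatively elementary, as $\mathbb Q_p^{n_v}$ has a neighbourhood basis of compact open subgroups and an approximate subgroup therefore differs boundedly from the subgroup it generates --- one obtains a $\mathbb Q_v$-subspace $W_v$ and a compact $K_v$ with $p_v(\Lambda)\subset K_v+W_v$ and $W_v\subset K_v+p_v(\Lambda)$. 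Set $U':=\prod_v W_v$ and $K:=\prod_v K_v$. The inclusion $\Lambda\subset K+U'$ is immediate since $\Lambda$ lies in the product of its projections; the reverse inclusion $U'\subset K+\Lambda$ is the heart of the matter and does \emph{not} follow formally from the per-place statements. It amounts to showing that an approximate subgroup of a vector group over the adeles is \emph{coarsely dense} in a compact neighbourhood of its coarse span $U'$, not merely contained in it. I would prove this as in Schreiber's original argument: were $\Lambda$ not coarsely dense in $U'$ there would be a direction in $U'$ along which $\Lambda$ has unbounded gaps, and, playing $\Lambda^2\subset F\Lambda$ against a compactness argument, one produces a strictly smaller $\mathbb Q_v$-subspace still coarsely containing $\Lambda$, contradicting minimality of $U'$ --- minimality making sense because the family of unipotent $S$-adic subgroups $V$ with $\Lambda\subset K_V+V$ is stable under intersection.

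For the inductive step assume $U$ non-abelian. Write $\pi_1\colon U\to A:=U/[U,U]$; since $A$ is a vector group, the abelian case yields a unipotent $S$-adic subgroup $\bar V\subset A$ and a compact $\bar K$ with $\pi_1(\Lambda)\subset\bar K\bar V$ and $\bar V\subset\bar K\pi_1(\Lambda)$. Let $W:=\pi_1^{-1}(\bar V)$, a unipotent $S$-adic subgroup of $U$ containing $[U,U]$, hence \emph{normal} in $U$ --- which is the point of working with the abelianisation rather than with a central quotient, as it keeps the descent lemmas in their standard form. Lifting $\bar K$ and using the normality of $W$ gives $\Lambda\subset K_1 W$. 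If $W\subsetneq U$, then $\dim W<\dim U$, $\Lambda^2\cap W$ is an approximate subgroup of $W$ commensurable (up to a compact set) with $\Lambda$ by a standard fact from the appendix, and the inductive hypothesis applied to $\Lambda^2\cap W$ inside $W$ produces $U'\subset W\subset U$ and the required compact set. If $W=U$, i.e. $\pi_1(\Lambda)$ is coarsely dense in $A$, the claim reduces to showing $\Lambda$ is coarsely dense in $U$, so that $U':=U$ works: apply the inductive hypothesis to the approximate subgroup $\Lambda^2\cap[U,U]$ of the normal subgroup $[U,U]$ to get that it is coarsely a unipotent $S$-adic subgroup $V_2\subset[U,U]$; a commutator computation shows that, since $\pi_1(\Lambda)$ is coarsely dense in $A$, the commutators $[\lambda_1,\lambda_2]$ (which lie in a bounded power of $\Lambda$ and in $[U,U]$) project to a coarsely dense subset of $[U,U]/[U,[U,U]]$, and iterating this down the lower central series of $[U,U]$ forces $V_2=[U,U]$; combining the coarse density of $\Lambda^2\cap[U,U]$ in $[U,U]$ with that of $\pi_1(\Lambda)$ in $U/[U,U]$, using normality of $[U,U]$, shows $\Lambda$ is coarsely dense in $U$.

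For uniqueness, if $(U_1',K_1)$ and $(U_2',K_2)$ both satisfy the conclusion then $U_1'\subset K_1\Lambda\subset K_1K_2U_2'$ and symmetrically $U_2'\subset K_2K_1U_1'$, so it suffices to show that a unipotent $S$-adic subgroup coarsely contained in another is contained in it. Projecting to $U/[U,U]$: the image of a subspace of a vector group in a quotient vector group is again a subspace, so if it is relatively compact it vanishes, whence $U_1'[U,U]\subset U_2'[U,U]$; iterating along the lower central series of $U$ (at each stage the relevant quotient is a vector group) gives $U_1'\subset U_2'$, and symmetrically $U_2'\subset U_1'$.

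The step I expect to be the main obstacle is the passage from ``coarsely contained in'' to ``coarsely dense in'': at the Archimedean place in the base case this is exactly Schreiber's theorem (hence cited rather than reproved), and propagating it through a non-abelian $U$ requires the lower-central-series commutator argument of the second case, which is where the genuinely new work lies. The remaining ingredients --- existence and intersection-stability of the minimal coarse span, and the normal-subgroup descent lemmas --- are routine, either elementary or available in the appendix.
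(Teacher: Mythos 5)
The overall architecture you propose (induction on dimension via the abelianisation, plus a commutator argument to propagate coarse density down the lower central series) is reasonable, but two steps on which everything rests do not hold as written. The first is the descent step in the case $W\subsetneq U$: you assert that $\Lambda\subset K_1W$ with $K_1$ compact and $W$ closed normal makes $\Lambda^2\cap W$ ``commensurable up to a compact set'' with $\Lambda$, citing a standard fact from the appendix. No such fact is available: Lemmas \ref{Lemma: Intersection of approximate subgroups} and \ref{Lemma: Intersection of commensurable sets} require \emph{finite} covers, and Proposition \ref{Proposition: Intersection and projections approximate lattices w/ closed subgroups} requires $\Lambda$ to be an approximate lattice with uniformly discrete projection. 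A counterexample already lives in $\mathbb{R}^2$: for $\alpha$ irrational, let $\Lambda=\{(n,t): n\in\mathbb{Z},\ t-\alpha n\in\mathbb{Z},\ |t|\le 1\}$, a symmetric approximate subgroup with $\Lambda+\Lambda\subset\{(0,-1),(0,0),(0,1)\}+\Lambda$. With $W=\mathbb{R}\times\{0\}$ one has $\Lambda\subset\bigl(\{0\}\times[-1,1]\bigr)+W$, yet $\Lambda^2\cap W=\{(0,0)\}$ while $\Lambda$ is coarsely dense in $W$. So passing to $\Lambda^2\cap W$ can lose everything, and the induction cannot be run on it. Repairing this is exactly why the paper first replaces $\Lambda$ by a commensurable approximate subgroup admitting a good model and works with the graph $\Gamma_f$: there the relevant intersections are taken with genuine open subgroups ($U_\infty\times O$, $O$ compact open), and the finiteness needed for the covering lemmas is recovered from the good model.

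The second problem is the abelian base case over a product of several places, which is where the real content sits and which you leave essentially unproved. You correctly note that the per-place construction gives the wrong $U'$ (the diagonal copy of $\mathbb{Z}$ in $\mathbb{R}^2$ already shows this), but the proposed repair --- minimality of $U'$ together with ``a direction with unbounded gaps produces a strictly smaller subspace'' --- is an Archimedean heuristic rather than an argument, and the difficulty lies precisely in the interaction between places (contrast $\mathbb{Z}[1/p]$ embedded diagonally in $\mathbb{R}\times\mathbb{Q}_p$, which is coarsely dense in the whole group, with $\{(n,n):n\in\mathbb{Z}\}$ in the same group, which is coarsely $\mathbb{R}\times\{0\}$). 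The paper does not attack this head-on either: it quotes the full real unipotent case from \cite{machado2019infinite} and the totally disconnected case from \cite[Appendix I]{MR3092475}, and glues them using the fact that $g\mapsto g^{p_1\cdots p_n}$ is contracting on the totally disconnected factor, which substitutes for any cross-place coarse-density argument. Your proof would need either to import those two results together with such a gluing mechanism, or to supply a genuine $S$-adic Schreiber argument in the abelian case; at present it does neither.
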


\begin{proof}
The structure of the proof roughly goes as follows: the result is known if $U$ is connected (i.e. $S=\{\infty\}$) by \cite{machado2019infinite} and if $U$ is totally disconnected by \cite[Appendix I]{MR3092475}.  To reduce the general case to these two cases, we can exploit the fact that the map $x \mapsto x^p$ is contracting in a $p$-adic unipotent group to show that most of the complexity reduces to understanding the intersection $\Lambda$ with a small neighbourhood around the connected component of $U$.  Although not entirely necessary, and because we will recycle some of these arguments later, we in fact start by operating a reduction to the case where $\Lambda$ is a subgroup (rather than approximate subgroups). We will use the theory of good models to reduce the question to this case.  This enables us to streamline the strategy described at the start of this paragraph. This is carried out in the next paragraph.

Assume thus that $\Lambda$ is equal to the group it generates $\Gamma$. Upon considering the Zariski-closure of $\Gamma$ we may suppose moreover that $\Gamma$ is Zariski-dense in $U$. Write $U=U_{\infty} \times U_{p_1} \times \ldots \times U_{p_n}$ where $U_{\infty}$ is a unipotent $\mathbb{R}$-group and $U_{p_i}$ is a unipotent $\mathbb{Q}_{p_i}$-group for all $i \in \{1, \ldots, n\}$. Let $U_{fin}$ denote $U_{p_1} \times \ldots \times U_{p_n}$. Choose a compact open subgroup $O \subset U_{fin}$. Then the projection of $\Gamma \cap \left(U_{\infty} \times O\right)$ to $U_{\infty}$ is a subgroup and must be contained and co-compact in a closed unipotent subgroup $U'_{\infty} \subset U_{\infty}$ (\cite[\S 2]{raghunathan1972discrete}). Furthermore, the map $g \mapsto g^{p_1\cdots p_n}$ is contracting in $U_{fin}$. So for all $ \gamma \in \Lambda$, there is an integer $m \geq 0$ such that $\gamma^{(p_1 \cdots p_n)^m} \in U_{\infty} \times O$. So the projection of $\gamma^{(p_1 \cdots p_n)^m}$ to $U_{\infty}$ is contained in $U'_{\infty}$. Hence, the projection of $\gamma$ to $U_{\infty}$ is contained in $U_{\infty}'$ i.e. $U_{\infty}'=U_{\infty}$ since $\Gamma$ is Zariski-dense.  Let $p_{fin}: U_{\infty} \times U_{fin} \rightarrow U_{fin}$ denote the natural projection. Then we can invoke \cite[Appendix I]{MR3092475} and find $U_{fin}' \subset U_{fin}$ normal and a compact subset $K_{fin} \subset U_{fin}$ such that $p_{fin}(\Gamma) \subset K_{fin}U_{fin}'$ and  $U_{fin}' \subset K_{fin}p_{fin}(\Gamma)$. Therefore, 
$$ \Gamma \subset U_{\infty} \times K_{fin}U_{fin}'.$$
Also, 
\begin{align*}
U_{\infty} \times U_{fin}' &\subset K_{fin}\Gamma U_{\infty} \\
                           &=  \left( K_{fin} \times U_{\infty}\right)\Gamma  \\
                           & \subset K_{fin}K_{\infty}\left(\Gamma \cap U_{\infty} \times O\right)\Gamma \\
                           & = K_{fin}K_{\infty}\Gamma
\end{align*}
for some compact subset $K_{\infty}$ such that $U_{\infty} \times O = K_{\infty}\left(\Gamma \cap \left(U_{\infty} \times O\right)\right)$.  Such a compact subset exists as the projection of $\Gamma \cap \left(U_{\infty} \times O\right)$ to $U_{\infty}$ was shown to be co-compact in $U_{\infty}$ and $O$ is compact. 

Let us come back to the case of approximate subgroups. By Proposition \ref{Proposition: Minimal commensurable approximate subgroup}, there is an approximate subgroup $\Lambda' \subset \langle \Lambda \rangle$ commensurable with $\Lambda$ such that $\Lambda'$ has a good model $f: \langle \Lambda' \rangle \rightarrow H$ with $H$ a connected nilpotent Lie group without normal compact subgroup i.e. a real unipotent group \cite{raghunathan1972discrete}. Let  $\Gamma_f:=\{(\gamma, f(\gamma)) : \gamma \in \langle \Lambda' \rangle\} \subset U \times H$ be the graph of $f$. By the first part of the proof, we have $U' \subset U \times H$ unipotent and a compact subset $K \subset U \times H$  such that $\Gamma_f \subset KU'$ and $U' \subset K \Gamma_f$. Consider now $\Lambda_f:=\{(\lambda, f(\lambda)): \lambda \in \Lambda'\} \subset \Gamma_f$. The projection of $U' \cap K^{-1} \Lambda_f$ to $H$ is relatively compact. Since the projection of $U'$ to $H$ is closed (e.g. \cite{raghunathan1972discrete}), this means that there is a compact subset $K'$ such that $\Lambda_f \subset K'(U' \cap U)$. Conversely, $\Gamma_f \cap K^{-1}(U' \cap U)$ also projects to a relatively compact subset $K''$ of $H$.  So any $(\gamma, f(\gamma)) \in \Gamma_f \cap K^{-1}(U' \cap U)$ satisfies $\gamma \in f^{-1}(K'')$. As $f$ is a good model of $\Lambda$,  this yields that $\Gamma_f \cap K^{-1}(U' \cap U)$ must be covered by finitely many (right-)translates of $\Lambda_f$. Since $U' \cap U \subset K\left(\Gamma_f \cap K^{-1}(U' \cap U)\right)$, there is a compact subset $K'''$ such that $U' \cap U \subset K'''\Lambda_f$.  Now if $K_0$ denotes the projection of $K'''$ to $U$, we have $\Lambda \subset K_0(U' \cap U)$ and $U' \cap U \subset K_0 \Lambda $ - here we have implicitly used the fact that $\Lambda_f$ projects to $\Lambda$. Hence, $U' \cap U$ is as desired. 

Finally, if $U'$ and $U''$ are two such groups, then there is a compact subset $K$ such that $U'' \subset KU'$ and $U' \subset KU''$. This implies $U'=U''$ since both are unipotent subgroups of $U$, see \cite[\S 2]{raghunathan1972discrete} and \cite[Appendix I]{MR3092475}. 
\end{proof}

The uniqueness of $U'$ implies: 

\begin{corollary}
Let $\Lambda$, $U$ and $U'$ be as above. If an automorphism $\alpha$ of $U$ commensurates $\Lambda$, then $\alpha(U') = U'$. In particular, $U'$ is normalised by $\Comm_U(\Lambda)$. 
\end{corollary}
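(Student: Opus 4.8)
The plan is to deduce the statement from the uniqueness half of Proposition~\ref{Proposition: Schreiber's theorem for S-adic unipotent groups}, by transporting the two coarse relations $\Lambda\subset KU'$ and $U'\subset K\Lambda$ along $\alpha$. Read $\alpha$ as a continuous (i.e.\ topological) automorphism of $U$, as in the introduction. Since $\alpha$ is a group automorphism, $\alpha(\Lambda)$ is again an approximate subgroup of $U$ ($e\in\alpha(\Lambda)$, it is symmetric, and $\alpha(\Lambda)^2=\alpha(\Lambda^2)$ lies in finitely many translates of $\alpha(\Lambda)$); and since $\alpha$ commensurates $\Lambda$, there is a finite $F_0\subset U$ with $\alpha(\Lambda)\subset F_0\Lambda$ and $\Lambda\subset F_0\alpha(\Lambda)$. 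Applying $\alpha$ to $\Lambda\subset KU'$ and $U'\subset K\Lambda$ yields $\alpha(\Lambda)\subset\alpha(K)\alpha(U')$ and $\alpha(U')\subset\alpha(K)\alpha(\Lambda)$, where $\alpha(K)$ is compact because $\alpha$ is continuous and $\alpha(U')$ is again a unipotent $S$-adic linear subgroup of $U$.

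Next I would chain these inclusions with those provided by the Proposition for $\Lambda$ and with the commensuration relation. On the one hand, $\alpha(U')\subset\alpha(K)\alpha(\Lambda)\subset\alpha(K)F_0\Lambda\subset\alpha(K)F_0KU'$; on the other hand, $U'\subset K\Lambda\subset KF_0\alpha(\Lambda)\subset KF_0\alpha(K)\alpha(U')$. Since $\alpha(K)F_0K$ and $KF_0\alpha(K)$ are compact, $\alpha(U')$ and $U'$ each sit in a compact neighbourhood of the other, so the argument closing the proof of Proposition~\ref{Proposition: Schreiber's theorem for S-adic unipotent groups} --- two unipotent $S$-adic linear subgroups at bounded Hausdorff distance coincide --- forces $\alpha(U')=U'$. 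For the last assertion, observe that for $g\in\Comm_U(\Lambda)$ conjugation $x\mapsto gxg^{-1}$ is a continuous automorphism of $U$ sending $\Lambda$ to a commensurable set, hence fixing $U'$ by what precedes; thus $\Comm_U(\Lambda)\subseteq N_U(U')$, which is the claim.

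The step I expect to need the most care is the assertion, used above, that $\alpha(U')$ is again a unipotent $S$-adic linear subgroup, so that the uniqueness clause applies to it. This rests on the fact that a continuous automorphism of a unipotent $S$-adic group is regular: such an $\alpha$ preserves the identity component and, through the equivalence between unipotent $S$-adic groups and their Lie algebras recalled in \S\ref{Subsection: $S$-adic linear groups}, is induced --- by a density argument over $\mathbb{Q}$ --- by a $\mathbb{Q}_v$-linear automorphism of the Lie algebra of each factor; in particular it carries unipotent algebraic subgroups to unipotent algebraic subgroups. Granting this point, the two chains of inclusions above complete the proof with no further input.
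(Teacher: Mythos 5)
Your proof is correct and follows the paper's intended route: the paper derives the corollary directly from the uniqueness clause of Proposition~\ref{Proposition: Schreiber's theorem for S-adic unipotent groups}, and you carry out exactly the details of how the two coarse inclusions are transported along $\alpha$ and then closed up via the commensuration of $\Lambda$ and $\alpha(\Lambda)$. The caveat you flag --- that $\alpha(U')$ must again be a unipotent $S$-adic subgroup, which holds because a continuous automorphism of a unipotent $S$-adic group is automatically regular on each local factor --- is indeed the one point worth spelling out, and your sketch of it is correct.
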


\subsection{Meyer's theorem for unipotent $S$-adic linear groups}\label{Subsection: Meyer's theorem and its generalisations}

We can now extend Meyer's theorem to $S$-adic unipotent groups following a strategy from \cite{machado2019infinite}.  An extension of Meyer's theorem is known much more generally in amenable groups by \cite{machado2019goodmodels} (and we use that fact to simplify the proof below). Our goal however is really to gain control over the good model created. 

\begin{proposition}\label{Proposition: Meyer's theorem $S$-adic unipotent group}
 Let $\Lambda$ be an approximate lattice in a unipotent $S$-adic algebraic group $U$. Then there is a unipotent $\mathbb{Q}$-group $\mathbf{U}$ such that: 
\begin{enumerate}
\item there is a surjective regular group homomorphism $\pi:\mathbf{U}(\mathbb{A}_{S}) \rightarrow U$;
\item $\mathbf{U}(\mathbb{A}_{S})$ is equal to the product $U_1 \times U_2$ of two Zariski-closed unipotent subgroups;
\item $\pi_{|U_1}$ is an isomorphism and $U_2 = \ker \pi$;
\item $\Lambda$ is commensurable with a model set coming from the cut-and-project scheme $(U_1,U_2, \mathbf{U}(\mathbb{Z}_S))$;
\item if $\alpha$ is a continuous automorphism of $U$ that commensurates $\Lambda$, then there is a regular automorphism $\alpha_{\mathbf{U}}$ of $\mathbf{U}(\mathbb{A}_S)$ defined over $\mathbb{Q}$ that stabilises $U_1$ and $U_2$ and such that $\alpha \circ \pi = \pi \circ \alpha_{\mathbf{U}}$. 
\end{enumerate}
\end{proposition}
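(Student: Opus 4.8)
The plan is to follow the strategy of \cite{machado2019infinite}: first put $\Lambda$ inside a cut-and-project scheme using the version of Meyer's theorem already available for amenable locally compact groups \cite{machado2019goodmodels}, and then use the algebraic structure of unipotent groups --- the $\exp/\log$ correspondence recalled in \S\ref{Subsection: $S$-adic linear groups}, Mal'cev rigidity \cite{raghunathan1972discrete}, and the $S$-integral approximate rings of \S\ref{Subsubsection: Pisot--Vijayaraghavan--Salem numbers of a number field} --- to pin down the internal space and endow everything with a $\mathbb{Q}$-structure. Throughout one may assume, adjoining the Archimedean place to $S$ if necessary, that $\infty \in S$, so that groups of the form $\mathbf{U}(\mathbb{Z}_S)$ are lattices in the corresponding adelic group.

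\emph{Step 1: a good model with nilpotent internal space.} Since $U$ is nilpotent it is amenable, so \cite{machado2019goodmodels} produces an approximate subgroup $\Lambda'$ commensurable with $\Lambda$, contained in $\Gamma := \langle\Lambda'\rangle$, together with a good model $f : \Gamma \to H$ onto a locally compact group with $\overline{f(\Gamma)} = H$ and $\overline{f(\Lambda')}$ compact; since amenability forces $\Lambda$, hence $\Lambda'$, to be uniform, one may moreover take the graph $\Gamma_f = \{(\gamma,f(\gamma)) : \gamma \in \Gamma\} \subset U \times H$ to be a uniform lattice, with $\Lambda'$ commensurable to the model set cut out of $\Gamma_f$ by a window in $H$. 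Applying Proposition \ref{Proposition: Schreiber's theorem for S-adic unipotent groups} to $\Lambda$ --- whose coarse hull $U'$ must be all of $U$ because $\Lambda$ is cocompact, as a proper Zariski-closed unipotent subgroup has infinite covolume --- one sees that $\Gamma$ is Zariski-dense in $U$; in particular $f(\Gamma) = p_H(\Gamma_f)$ is nilpotent of class at most that of $U$, hence so is $H = \overline{f(\Gamma)}$.

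\emph{Step 2: straightening the internal space (the main obstacle).} Next I would show that the good model can be rechosen so that $H$ is the $\mathbb{A}_{S'}$-points of a unipotent $S'$-adic linear group for some $S' \subseteq S$. One is free to replace the internal space by $H/K$ for any compact normal $K \triangleleft H$ and $\Gamma_f$ by a commensurable lattice (Proposition \ref{Proposition: Minimal commensurable approximate subgroup}); using this, van Dantzig's theorem, and the contraction $x \mapsto x^{q}$ on $q$-adic unipotent groups exactly as in the proof of Proposition \ref{Proposition: Schreiber's theorem for S-adic unipotent groups}, one decomposes $H$ into a connected nilpotent Lie factor (a unipotent $\mathbb{R}$-group, \cite{raghunathan1972discrete}) and totally disconnected factors which, after the reductions above, are recognised as the $\mathbb{Q}_w$-points of unipotent $\mathbb{Q}_w$-groups. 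The delicate point is that the places $w$ occurring must lie in $S$: if the "obvious" internal space involves a prime $w \notin S$ --- as already happens for Pisot sets built from $\mathbb{Z}[1/p]$ inside $\mathbb{R} \times \mathbb{Q}_p$ --- then the $\mathbb{Z}_S$-integrality of the lattice to be constructed (its diagonal embedding in $\mathbb{A}^S$ being relatively compact) forces that factor to be bounded, hence killable after passing to a commensurable realisation with no compact normal subgroup; carrying out this $p$-adic bookkeeping, and checking that every locally compact nilpotent group arising really can be realised in this way up to commensurability, is where I expect the bulk of the work to lie.

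\emph{Step 3: arithmetisation and equivariance.} Once $\Gamma_f$ is a uniform lattice in the unipotent $S$-adic group $U \times H$, I would rescale it within its commensurability class so that the Baker--Campbell--Hausdorff structure constants of $\log\Gamma_f$ are $\mathbb{Z}_S$-integral --- controlling $\mathbb{Z}_S$-spans through the approximate-ring structure of $\mathcal{O}_{K,S}$ and the polynomial statement Lemma \ref{Lemma: Pisot approximate rings and polynomials} --- so that the $\mathbb{Q}$-span of $\log\Gamma_f$ is a nilpotent $\mathbb{Q}$-Lie algebra integrating to a unipotent $\mathbb{Q}$-group $\mathbf{U}$. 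Mal'cev functoriality then realises $U \times H$ as a quotient of $\mathbf{U}(\mathbb{A}_S)$ with unipotent kernel and $\Gamma_f$ as the image of $\mathbf{U}(\mathbb{Z}_S)$, commensurably; since $U$ and $H$ are unipotent $S$-adic groups the decomposition $U\times H$ lifts, through the product $\mathbf{U}(\mathbb{A}_S) = \prod_{v\in S}\mathbf{U}(\mathbb{Q}_v)$, to a decomposition into Zariski-closed unipotent subgroups $U_1 \times U_2$ with $\pi_{|U_1}$ an isomorphism onto $U$ and $U_2 = \ker\pi$. This yields (1)--(4), (4) because $\mathbf{U}(\mathbb{Z}_S)$ is commensurable with $\Gamma_f$, so the model set it defines through a window in $U_2$ is commensurable with $\Lambda$. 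For (5): given a continuous automorphism $\alpha$ of $U$ commensurating $\Lambda$, the maps $f$ and $f \circ \alpha^{-1}$ are good models of commensurable approximate subgroups, hence differ by a continuous automorphism $\alpha'$ of $H$; then $(\alpha,\alpha')$ is an automorphism of $U \times H$ commensurating $\Gamma_f$, so by Mal'cev rigidity it extends uniquely to a $\mathbb{Q}$-defined regular automorphism $\alpha_{\mathbf{U}}$ of $\mathbf{U}(\mathbb{A}_S)$ which respects the product $U_1\times U_2$ (as it respects $U\times H$) and satisfies $\alpha \circ \pi = \pi \circ \alpha_{\mathbf{U}}$ by construction.
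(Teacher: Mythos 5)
Your overall strategy --- apply the amenable version of Meyer's theorem to obtain a good model, then arithmetise via the correspondence between unipotent groups and their Lie algebras --- is the same as the paper's, and your Steps 1 and the conclusion of Step 3 (statements (1)--(5)) are aimed at the right targets. But the middle of the argument has two substantive problems.

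\emph{Step 2 chases a phantom.} You anticipate that the internal space $H$ may contain $p$-adic factors at places outside $S$ and that ``the bulk of the work'' lies in killing them. But Proposition \ref{Proposition: Minimal commensurable approximate subgroup} already lets you take $H$ to be a \emph{connected} Lie group without compact normal subgroup; since $\langle\Lambda'\rangle$ is nilpotent, $H$ is then a simply connected nilpotent Lie group, i.e.\ a real unipotent group, full stop. There is no $p$-adic bookkeeping to do on the internal side: all the non-Archimedean complexity sits in the physical space $U$, not in $H$.

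\emph{Step 3 skips the real work.} Producing the $\mathbb{Q}$-group $\mathbf{U}$ is not a matter of rescaling $\log\Gamma_f$ to make BCH constants $\mathbb{Z}_S$-integral; $\Gamma_f$ is a lattice in the $S$-adic group $U\times H$, whose Lie algebra is a module over $\prod_{v\in S}\mathbb{Q}_v\times\mathbb{R}$, and there is no ready-made Mal'cev theory for lattices there --- that is precisely what needs to be built. The paper proceeds by \emph{reducing to the Archimedean factor}: one intersects $\Gamma_f$ with $U_\infty\times O\times H$ for a compact open subgroup $O$ of the totally disconnected factor $U_{fin}$, obtaining a genuine lattice $\Delta$ in the real unipotent group $U_\infty\times H$; Mal'cev theory (\cite{raghunathan1972discrete}) then gives $\mathbf{U}$ with $\mathbf{U}(\mathbb{R})\cong U_\infty\times H$ and $\mathbf{U}(\mathbb{Z})\cong\Delta$. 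The extension to the non-Archimedean places uses that the projection of $\Gamma_f\cap(U_\infty\times O\times H)$ to $O$ is dense, and then invokes the congruence-subgroup result of Chahal \cite{zbMATH03695461} to produce $\pi_1:\mathbf{U}(\mathbb{A}_{S\setminus\{\infty\}})\to U_{fin}$; the $q$-adic contraction argument then gives $\Gamma_f\subset\pi_2(\mathbf{U}(\mathbb{Z}_S))$. None of this is captured by the rescaling step you describe.

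Finally, the assertion that ``the decomposition $U\times H$ lifts, through $\mathbf{U}(\mathbb{A}_S)=\prod_{v\in S}\mathbf{U}(\mathbb{Q}_v)$, to a decomposition $U_1\times U_2$'' is not correct and not just a matter of phrasing. The subgroups $U_1$ and $U_2$ are \emph{not} groupings of place factors; $H$ and $U_\infty$ both live inside $\mathbf{U}(\mathbb{R})$. Moreover, the surjection $\pi:\mathbf{U}(\mathbb{A}_S)\to U$ has no reason a priori to split as a direct product: the paper obtains $U_1$ by applying Proposition \ref{Proposition: Schreiber's theorem for S-adic unipotent groups} to $\Lambda''=\mathbf{U}(\mathbb{Q})\cap\pi^{-1}(\Lambda')$ inside $\mathbf{U}(\mathbb{A}_S)$, uses uniqueness plus the Zariski-density of the commensurating group to deduce that $U_1$ is normal, and then shows $\pi(U_1)=U$ and $U_1\cap U_2$ is compact, hence trivial. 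This splitting is a genuine theorem, and your proposal does not prove it. Step 3's outline for (5) is essentially right and matches the paper.
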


\begin{proof}
According to \cite{machado2019goodmodels} we know that $\Lambda$ is laminar. By Proposition \ref{Proposition: Minimal commensurable approximate subgroup}, there is an approximate lattice $\Lambda' \subset \langle\Lambda\rangle$ commensurable with $\Lambda$ that has a good model $f: \langle \Lambda' \rangle \rightarrow H$ with dense image where $H$ is a connected nilpotent Lie group without compact normal subgroup i.e. a real unipotent group. Let $\Gamma_f \subset U \times H$ denote the graph of $f$. Then $\Gamma_f$ is a uniform lattice in $U \times H$ (Proposition \ref{Proposition: Equivalence good models and model sets}). In particular, $\Gamma_f$ is Zariski-dense. Let $U_{\infty}$ denote the factor of $U$ defined over $\mathbb{R}$ and let $U_{fin}$ denote the totally disconnected factor. Write $p_{\infty}: U \times H \rightarrow U_{\infty} \times H$. The kernel of $p_{\infty}$ is $U_{fin}$ and, thus, does not contain any non-trivial discrete subgroup. So $\Gamma_f$ projects injectively to $U_{\infty}$. Choose $O$ a compact open subgroup of $U_{fin}$ and let $\Delta$ denote $p_{\infty}\left(\Gamma_f \cap \left(U_{\infty} \times O  \times H\right)\right)$. Then $\Delta$ is a lattice in $U_{\infty} \times H$ (for, notice that $\Delta$ is a model set, hence and approximate lattice, and a subgroup). So there is a unipotent $\mathbb{Q}$-group $\mathbf{U}$ such that we can identify $\mathbf{U}(\mathbb{R})$ with $U_{\infty} \times H$ and $\mathbf{U}(\mathbb{Z})$ with $\Delta$ \cite{raghunathan1972discrete} (upon, possibly, modifying the compact subgroup $O$). 

Let $p_1, \ldots, p_n$ be the prime numbers appearing in $S$. The projection of $\Gamma_f$ to $U_{fin}$ is Zariski-dense and relatively dense. So it is dense for the Hausdorff topology since $U_{fin}$ is a totally disconnected unipotent $S$-adic group.  The projection of $\Gamma_f \cap \left(U_{\infty} \times O  \times H\right)$ to $O$ is therefore dense in $O$. By \cite{zbMATH03695461}, there is therefore a continuous group homomorphism $\prod_{i=1}^n\mathbf{U}(\mathbb{Z}_{p_i}) \rightarrow U_{fin}$, which extends to a unique regular group homomorphism $\pi_1:\mathbf{U}(\mathbb{A}_{S \setminus \{\infty\}}) \rightarrow U_{fin}$. Let $\pi_2: \mathbf{U}(\mathbb{A}_{S}) \rightarrow U \times H$ denote the product of $\pi_1$ and the isomorphism $\mathbf{U}(\mathbb{R})\rightarrow U_{\infty} \times H$. Remark that $\pi_2$ is injective on $\mathbf{U}(\mathbb{Q})$ and $\pi_2(\mathbf{U}(\mathbb{Z})) =  \Gamma_f \cap \left(U_{\infty} \times O  \times H\right)$.

Now, for all $\gamma \in \Gamma_f$ there is an integer $m \geq 1$ such that $$\gamma^{\left(p_1 \cdots p_n\right)^m} \in \left(U_{\infty} \times O \times H\right) \cap \Gamma_f,$$
see the proof of Proposition \ref{Proposition: Schreiber's theorem for S-adic unipotent groups} for details. But $\left(U_{\infty} \times O \times H\right) \cap \Gamma_f = \pi_2(\mathbf{U}(\mathbb{Z}))$. In other words, $\Gamma_f \subset \pi_2\left(\mathbf{U}(\mathbb{Z}_S)\right)$. Let $\Gamma \subset \mathbf{U}(\mathbb{Z}_S)$ denote the pull-back of $\Gamma_f$ through $\pi_2$ restricted to $\mathbf{U}(\mathbb{Q})$. 

Let $\pi$ denote the composition of $\pi_2$ with the natural projection $U \times H \rightarrow U$. Then the restriction of $\pi$ to $\mathbf{U}(\mathbb{Q})$ is injective. Denote by $\Lambda''$ the set $\mathbf{U}(\mathbb{Q}) \cap \pi^{-1}(\Lambda')$. We know that $\Lambda'' \subset \mathbf{U}(\mathbb{Z}_S)$ and is commensurated by $\Gamma$ which is Zariski-dense in $\mathbf{U}$.  By Proposition \ref{Proposition: Schreiber's theorem for S-adic unipotent groups} applied to $\Lambda''$ we find a Zariski-closed normal subgroup $U_1$ of $\mathbf{U}(\mathbb{A}_S)$ and a compact subset $K$ such that $\Lambda'' \subset KU_1$ and $U_1 \subset K\Lambda''$. We claim that $U_1$, $\pi$ and $U_2:=\ker \pi$ are as required. Indeed, since $\Lambda'$ is relatively dense in $U$ and $\pi(KU_1)$ contains $\Lambda'$, $\pi(U_1)$ is relatively dense in $U$. Hence, $\pi(U_1)=U$ as it is Zariski-closed. Moreover, since $\Lambda'$ is uniformly discrete, $\Lambda' \cap \pi(K^{-1})$ is finite. So $\Lambda'' \cap K^{-1}\left(U_1 \cap U_2\right)$ is finite because the restriction of $\pi$ to $\Lambda''$ is injective. As $U_1 \cap U_2 \subset K \Lambda''$, we have
$$U_1 \cap U_2  \subset K\left( \Lambda'' \cap K^{-1}\left(U_1 \cap U_2\right)\right).$$
So $U_1 \cap U_2$ is compact and, hence, trivial. We have indeed $\mathbf{U}(\mathbb{A}_S) = U_1 \times U_2$. We have that (1)-(3) are satisfied for these choices of $U_1, U_2$ and $\pi$. 

Let us prove (4). For simplicity, identify $U$ and $U_1$ and $\pi$ with the natural projection to $U_1$. Since $\Lambda'' \subset KU_1$, the projection of $\Lambda''$ to $U_2$ is relatively compact. Since $\Lambda''$ is contained in the lattice $\mathbf{U}(\mathbb{Z}_S)$, $\pi(\Lambda'')$ is therefore contained in a model set coming from the cut-and-project scheme $(U_1,U_2,\mathbf{U}(\mathbb{Z}_S))$.  But $\pi(\Lambda'')=\Lambda' $ which is a uniform approximate lattice.  By Lemma \ref{Corollary: Towers of star-approximate subsets are commensurable}, $\Lambda'$ is commensurable with said model set. 

It remains to prove (5). Let $\alpha$ be as in the statement of (5). Then $\alpha$ commensurates $\Lambda'$ and $\langle \Lambda' \rangle$ (Proposition \ref{Proposition: Minimal commensurable approximate subgroup}). Moreover, there is a regular automorphism $\alpha_H$ of $H$ such that $f \circ \alpha = \alpha_H \circ f$ on a finite index subgroup of $\langle \Lambda' \rangle$.  In other words, the product automorphism $\alpha \times \alpha_H$ commensurates $\Gamma_f$. In particular, $(\alpha \times \alpha_H)_{|U_{\infty} \times H}$ commensurates $\mathbf{U}(\mathbb{Z})$ (identified with the projection of $\Gamma_f \cap \left(U_{\infty} \times O  \times H\right)$ to $U_{\infty} \times H$). So there is a $\mathbb{Q}$-automorphism $\alpha_{\mathbf{U}}$ of $\mathbf{U}$ such $\alpha_{\mathbf{U}}: \mathbf{U}(\mathbb{R}) \rightarrow \mathbf{U}(\mathbb{R})$ is equal to $(\alpha \times \alpha_H)_{|U_{\infty} \times H}$, see e.g. \cite[Thm 2.11]{raghunathan1972discrete}.  By construction, $(\alpha \circ \pi)_{| \langle \Lambda' \rangle} = (\pi \circ \alpha_{\mathbf{U}})_{| \langle \Lambda' \rangle}$. So $\alpha \circ \pi= \pi \circ \alpha_{\mathbf{U}}$ as $\langle \Lambda' \rangle$ is Zariski-dense in $\mathbf{U}$.  So $\alpha_{\mathbf{U}}(U_1) = U_1$ and $\alpha_{\mathbf{U}}(U_2) = U_2$, since $\pi$ is the projection to $U_1$ parallel to $U_2$.  
\end{proof}

The ideas behind the proof of (5) can also show that the good model constructed is fairly canonical. 

\begin{corollary}\label{Remark: Meyer theorem is canonical}
Let $\Lambda$ be an approximate lattice in an $S$-adic unipotent group $U$.  Let $\phi: U \rightarrow U'$ be a surjective group homomorphism with target an $S$-adic unipotent group.  Suppose that $\phi(\Lambda)$ is an approximate lattice in $U'$ and let $\mathbf{U}$ and $\mathbf{U}'$  be given by Proposition \ref{Proposition: Meyer's theorem $S$-adic unipotent group} applied to $\Lambda$ and $\phi(\Lambda)$ respectively.  Then there is a homomorphism of $\mathbf{Q}$-groups $\psi: \mathbf{U} \rightarrow \mathbf{U}'$ such that the triple $(\psi(U_1),\psi(U_2), \mathbf{U}'(\mathbb{Z}_S))$ is a  cut-and-project scheme as in the conclusion of Proposition \ref{Proposition: Meyer's theorem $S$-adic unipotent group}.
\end{corollary}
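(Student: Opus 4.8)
The plan is to rerun the argument that proves part~(5) of Proposition~\ref{Proposition: Meyer's theorem $S$-adic unipotent group} with the commensurating automorphism $\alpha$ there replaced by the quotient $\phi$; the one genuinely new ingredient is that the internal space attached to an approximate lattice is functorial not merely under automorphisms but under arbitrary quotients. So first I would recall the setup of that proof: after passing to commensurable approximate lattices via Proposition~\ref{Proposition: Minimal commensurable approximate subgroup}, one fixes good models $f\colon\langle\Lambda\rangle\to H$ and $f'\colon\langle\phi(\Lambda)\rangle\to H'$ with $H,H'$ real unipotent, forms the uniform lattices $\Gamma_f\subset U\times H$ and $\Gamma_{f'}\subset U'\times H'$ (Proposition~\ref{Proposition: Equivalence good models and model sets}), and attaches to them the $\mathbb{Q}$-groups $\mathbf{U},\mathbf{U}'$ together with the regular surjections $\pi,\pi'$ and the canonical kernels $U_2=\ker\pi$, $U_2'=\ker\pi'$. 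Since $\phi(\langle\Lambda\rangle)=\langle\phi(\Lambda)\rangle$ and $\phi(\Lambda)$ is commensurable with the approximate lattice used to build $f'$, the set $f'(\phi(\Lambda))$ is relatively compact, so the homomorphism $f'\circ\phi\colon\langle\Lambda\rangle\to H'$ sends $\Lambda$ into a relatively compact set.

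The crux, and the step I expect to be the main obstacle, is to factor $f'\circ\phi$ through $f$. For this I would observe that $(f,\,f'\circ\phi)\colon\langle\Lambda\rangle\to H\times H'$ is again a good model of $\Lambda$: the image of $\Lambda$ is relatively compact in each coordinate, and the isolating property --- that some identity neighbourhood has preimage contained in $\Lambda$ --- is inherited from the first coordinate. By uniqueness of good models this is isomorphic to $f$, which forces $f'\circ\phi$ to be constant on the fibres of $f$ and hence to factor as $\psi_H\circ f$ for a continuous homomorphism $\psi_H\colon H\to H'$. Since $\phi$ and $f'$ have dense image, so does $\psi_H$, and being a homomorphism of real unipotent groups it has closed image; thus $\psi_H$ is a surjective regular homomorphism --- the exact analogue of the automorphism $\alpha_H$ in the proof of (5).

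It remains to close the argument as in (5). The product $\phi\times\psi_H\colon U\times H\to U'\times H'$ is a surjective morphism of $(S\cup\{\infty\})$-adic unipotent groups (using that $\phi$ respects the decomposition of $U$ into its Archimedean and totally disconnected factors) carrying $\Gamma_f$ into $\Gamma_{f'}$, since $(\gamma,f(\gamma))\mapsto(\phi(\gamma),f'(\phi(\gamma)))$. Restricting to the Archimedean factors and applying $S$-adic Mal'cev rigidity (\cite[Thm~2.11]{raghunathan1972discrete}, \cite{zbMATH03695461}) exactly as in the construction of $\pi_2$ and of $\alpha_{\mathbf{U}}$ --- having first chosen the compact open subgroups used in the construction compatibly, so that $\phi$ maps the one attached to $\Lambda$ into that attached to $\phi(\Lambda)$ --- yields a surjective $\mathbb{Q}$-morphism $\psi\colon\mathbf{U}\to\mathbf{U}'$ with $\pi_2'\circ\psi=(\phi\times\psi_H)\circ\pi_2$, and hence $\pi'\circ\psi=\phi\circ\pi$. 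From this $\psi(U_2)\subseteq U_2'$, and in fact $\psi$ restricts to a surjection $U_2\to U_2'$ because $\psi_H$ is onto; a routine manipulation of complements of ideals in nilpotent Lie algebras then lets one choose the (non-canonical) complements $U_1$ and $U_1'$ so that $\psi(U_1)=U_1'$. Finally $(\psi(U_1),\psi(U_2),\mathbf{U}'(\mathbb{Z}_S))$ is the cut-and-project scheme Proposition~\ref{Proposition: Meyer's theorem $S$-adic unipotent group} produces for $\phi(\Lambda)$: $\psi(\mathbf{U}(\mathbb{Z}_S))$ is commensurable with $\mathbf{U}'(\mathbb{Z}_S)$ since a surjective $\mathbb{Q}$-morphism of unipotent groups carries $S$-integral points to a lattice commensurable with the $S$-integral points, and the image under $\pi'\circ\psi$ of the relevant set lies in $\psi(\mathbf{U}(\mathbb{Z}_S))$ with relatively compact projection to $\psi(U_2)$ and is commensurable with $\phi(\Lambda)$, so it is commensurable with the associated model set.
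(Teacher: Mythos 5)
Your proposal is correct, but it takes a genuinely different route from the paper's. The paper's own proof is much shorter: it does not choose the good model $f'$ of $\phi(\Lambda)$ independently and then establish compatibility after the fact. Instead, it observes (citing Lem.~3.3 of \cite{machado2019goodmodels}) that a good model of $\phi(\Lambda)$ can be produced directly from the good model $f\colon\langle\Lambda'\rangle\to H$ of $\Lambda$, as the composition of $f$ with the quotient $H\to H/N$ where $N=\overline{f(\ker\phi\cap\langle\Lambda\rangle)}$. With this choice, $\Gamma_{f'}$ is literally a factor of $\Gamma_f$ under the surjection $U\times H\to U'\times H/N$, so one can simply \emph{run the construction of $\mathbf{U}'$ in parallel with that of $\mathbf{U}$} and read off $\psi$ at each step; there is nothing to match up afterwards.

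Your route — fixing $f$ and $f'$ independently, forming $(f,\,f'\circ\phi)$, and invoking the uniqueness of the good model with dense image and compact-normal-subgroup-free target to factor $f'\circ\phi$ through $f$ — works, but it leans on a slightly stronger form of that uniqueness than the statement of Proposition~\ref{Proposition: Minimal commensurable approximate subgroup}: you need uniqueness not only of $H$ but of the \emph{map} up to the intertwining isomorphism, so that the coordinate projection $G_0\to H$ (where $G_0$ is the closure of the image of your pair map) is itself the isomorphism and $\psi_H$ is then $p_2\circ p_1^{-1}$. That is a true and standard fact about good models, but the paper avoids needing it. Your proposal also has to handle the fact that $\langle\phi(\Lambda)'\rangle$ may only have finite index in $\phi(\langle\Lambda'\rangle)$ — one can pass to finite-index subgroups, but the paper's compatible choice sidesteps this entirely. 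In exchange for this extra bookkeeping, your proof makes the functoriality of the internal space more explicit as a universal property, which is a worthwhile perspective; the paper's proof is more economical because the compatible good model is essentially free.

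One minor caution on your last step: the goal is to show $(\psi(U_1),\psi(U_2),\mathbf{U}'(\mathbb{Z}_S))$ is a valid cut-and-project scheme of the type produced by Proposition~\ref{Proposition: Meyer's theorem $S$-adic unipotent group}, not to show $\psi(U_i)=U_i'$ for a previously fixed choice of $U_i'$. Your remark that one can "choose the complements so that $\psi(U_1)=U_1'$" is fine, but the cleaner statement — and what the paper implicitly does — is that $\psi(U_1)$ and $\psi(U_2)$ themselves satisfy (2)--(4) of that Proposition, so no re-choosing is needed.
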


\begin{proof}
A good model $f'$ of the image of $\Lambda$ in $U'$ is simply given by the composition of $f$ and the projection of $H \rightarrow H/N$ where $N=\overline{f\left( \ker \phi \cap \langle \Lambda \rangle\right)}$ \cite[Lem.  3.3]{machado2019goodmodels}.  Now, $H/N$ is also a connected nilpotent Lie group (hence, a real unipotent group) and $\Gamma_{f'}$ is a factor of $\Gamma_f$. Following the constructions of $\mathbf{U}'$ and $\mathbf{U}$ in parallel, we reach the desired conclusion.
\end{proof}

\section{Intersection theorems for approximate lattices}\label{Section: Intersection theorems for approximate lattices}
This section is concerned with understanding how approximate lattices behave around certain natural subgroups of their ambient group. The archetypical theorems are similar to standard results due to Bieberbach and Mostow.

Mostow's result is concerned with a much larger class of groups. 

\begin{theorem*}[Mostow, \cite{MR0289713}]
Let $\Gamma$ be a lattice in a connected Lie group $G$ that has no compact normal semi-simple subgroup. Let $N$ denote the nilpotent radical of $G$ (i.e. the maximal connected nilpotent normal subgroup of $G$). Then $\Gamma \cap N$ is a lattice in $N$. 
\end{theorem*}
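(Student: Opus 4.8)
The plan is to reduce the statement, by two standard maneuvers, to the case of a simply connected solvable Lie group, and then to feed it into the rigidity theory of lattices in such groups. \emph{First, reduce to the solvable case.} Let $R = \Rad(G)$ be the radical. Since $N$ is the unique maximal connected nilpotent normal subgroup of the connected solvable group $R$, it is characteristic in $R$, hence normal in $G$, and comparing maximality conditions shows that $N$ is simultaneously the nilradical of $G$ and of $R$. By Auslander's theorem on radicals of lattices --- this is where the hypothesis that $G$ has no compact normal semisimple subgroup is used --- the intersection $\Gamma \cap R$ is a lattice in $R$, and moreover $\Gamma R$ is closed with $\Gamma R/R$ a lattice in the semisimple group $G/R$; see \cite{raghunathan1972discrete}. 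It therefore suffices to prove the theorem for the lattice $\Gamma \cap R$ in $R$, so we may assume from now on that $G$ is connected solvable.

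\emph{Next, reduce to the simply connected case.} Let $p \colon \tilde G \to G$ be the universal cover and set $\tilde\Gamma := p^{-1}(\Gamma)$. Then $\tilde\Gamma$ is discrete, as it contains the discrete central subgroup $\ker p$, and $\tilde G/\tilde\Gamma \cong G/\Gamma$, so $\tilde\Gamma$ is a lattice in the simply connected solvable group $\tilde G$. Its nilradical $\tilde N$ is the identity component of $p^{-1}(N)$; it has Lie algebra $\mathfrak n$ and $p(\tilde N) = N$. Granting the theorem for $(\tilde G, \tilde\Gamma)$, the group $p(\tilde\Gamma \cap \tilde N)$ is cocompact in $N$ and contained in the discrete group $\Gamma \cap N$, which forces $\Gamma \cap N$ to be a cocompact lattice in $N$. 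So we may assume in addition that $G$ is simply connected; then $N$ is simply connected nilpotent and $G/N \cong \mathbb R^k$ for some $k$.

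\emph{Now the core case: $G$ simply connected solvable.} Here one first recalls Mostow's theorem that every lattice in a connected solvable Lie group is automatically cocompact, so that $G/\Gamma$ is compact. This makes ``$\Gamma \cap N$ is a lattice in $N$'' equivalent to ``$\Gamma N$ is closed in $G$'', equivalently to ``the image of $\Gamma$ in the vector group $G/N \cong \mathbb R^k$ is discrete'': if that image is discrete then $\Gamma N$ is closed, so $N/(\Gamma \cap N) \cong \Gamma N/\Gamma$ is a closed subset of the compact space $G/\Gamma$, hence compact. To prove this discreteness I would appeal to the algebraic rigidity of $(G,\Gamma)$ via Mostow's semisimple splitting: embed $G$ in $\hat G = T \ltimes M$, where $M$ is a simply connected nilpotent group (the nilshadow of $G$), $T \cong \mathbb R^d$ acts on $M$ by commuting semisimple automorphisms, $G \cap M = N$ and $\hat G = GM = GT$. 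Transporting $\Gamma$ through the nilshadow diffeomorphism $G \to M$ yields a lattice in $M$, and Mal'cev's rationality theory for lattices in nilpotent Lie groups constrains $\Gamma \cap N = \Gamma \cap M$ sharply enough to force the image of $\Gamma$ in $G/N$ to be a lattice in $\mathbb R^k$. This last step is the substance of Mostow's original argument; see \cite{MR0289713} and \cite[Ch. IV]{raghunathan1972discrete}.

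The main obstacle is exactly this final step. The reduction to the solvable case and the passage to the universal cover are formal, and after them the usual difficulties coming from non-uniformity evaporate, since lattices in solvable Lie groups are cocompact. What remains genuinely hard is that the image of $\Gamma$ in $G/N \cong \mathbb R^k$ is a priori only a relatively dense subgroup, and no soft topological argument makes it discrete; one must bring in the arithmetic structure of $\Gamma$ --- its polycyclicity, the $\mathbb Q$-rationality of the nilradical in the sense of Mal'cev, and the compatibility of $\Gamma$ with the semisimple splitting of $G$.
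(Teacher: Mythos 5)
The paper only quotes Mostow's theorem with a citation to \cite{MR0289713} and gives no proof, so there is no internal argument to compare your outline against; what follows is an assessment of the outline on its own terms.

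Your two preliminary reductions are the right instinct, but the first one has a genuine gap as stated. The version of the Auslander--Wang theorem you appeal to yields ``$\Gamma \cap \Rad(G)$ is a lattice'' under the hypothesis that $G/\Rad(G)$ has \emph{no compact factors}. This is not the same as ``$G$ has no compact normal semi-simple subgroup'': take $G = SU(2) \ltimes \mathbb{R}^3$ with the adjoint action. Since $SU(2)$ acts non-trivially on $\mathbb{R}^3$, no Levi factor is normal, so $G$ has no compact normal semi-simple subgroup; yet $G/\Rad(G) \cong SU(2)$ is compact, so Auslander's theorem as you state it does not apply. (Here the conclusion still holds, but by Bieberbach's theorem, not by the cited reduction.) So the reduction to the solvable case, under the hypothesis as given, needs a separate argument to rule out lattices projecting non-discretely into compact factors of $G/\Rad(G)$.

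The more serious problem is that the core case is not actually proved: it is gestured at in a way that is circular. The nilshadow map $G \to \hat{G}/T \cong M$ is a diffeomorphism but not a group homomorphism, so ``transporting $\Gamma$ through the nilshadow diffeomorphism'' does not by itself produce a lattice in $M$, and proving that some modification of $\Gamma$ gives a lattice in $M$ is essentially equivalent to the statement you are trying to prove. Likewise, invoking ``Mal'cev's rationality theory for lattices in nilpotent Lie groups'' to constrain $\Gamma \cap N$ presupposes that $\Gamma \cap N$ is already known to be a lattice in $N$ --- which is exactly the conclusion. You acknowledge this, but it means the outline stops precisely where the theorem begins. To close it one would need the genuine input: for example, show via an algebraic-hull / Zariski-closure argument that $\mathrm{Ad}(N)$ lies in the Zariski closure of $\mathrm{Ad}(\Gamma)$, deduce that $\Gamma N$ is closed, and only then apply Mal'cev rigidity; this is the substance of Mostow's proof and cannot be outsourced to the rigidity theory it is meant to enable.
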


By \emph{intersection theorem}, we understand a theorem that asserts that every approximate lattice (or, lattice) of a given group intersects a given natural subgroup in an approximate lattice. The natural subgroups in question are sometimes called \emph{hereditary}. These theorems tell us that, to a certain extent, approximate lattices \emph{comply} with the structure of their ambient group. 

Intersection theorems of lattices have a long history (and a long one of errors), we refer to the survey \cite{MR3358545}. The standard techniques employed are essentially based on ingenious manipulations of commutators, and we largely follow this strategy below. The easiest consequence of this point of view is the fact that centralisers of elements in the commensurator of a uniform approximate lattice are hereditary, see \S \ref{Subsubsection: Centralisers} below. More recently, a strategy involving a strengthening of the Tits alternative was discovered by Breuillard and Gelander \cite{MR2373146}. Following this philosophy, we obtained in \cite{machado2019goodmodels} the first intersection theorem for approximate lattices, see \S \ref{Subsubsection: Solvable radical} below.

\subsection{A survey of intersection theorems and elementary considerations}
\subsubsection{Intersections and projections}
A useful perk of intersection results is that they also inform us on projections.

\begin{proposition}[Prop. 6.2, \cite{machado2019goodmodels}]\label{Proposition: Intersection and projections approximate lattices w/ closed subgroups}
Let $\Lambda$ be an approximate lattice in a locally compact group $G$. Let $N$ be a closed normal subgroup and let $p:G \rightarrow G/N$ denote the natural projection. Then the following are equivalent: 
\begin{enumerate}
\item $p(\Lambda)$ is an approximate lattice in $G/N$;
\item $p(\Lambda)$ is uniformly discrete;
\item $\Lambda^2 \cap N$ is an approximate lattice in $N$. 
\end{enumerate}
If, moreover, $\Lambda$ is uniform, then (2) and (3) are equivalent for all $N$ closed (not necessarily normal). 
\end{proposition}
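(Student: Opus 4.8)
The plan is to prove $(1) \Rightarrow (2) \Rightarrow (3) \Rightarrow (1)$ for $N$ normal, and then treat the uniform case separately. The implication $(1) \Rightarrow (2)$ is immediate: an approximate lattice in $G/N$ is discrete by definition, and — since $p(\Lambda)$ is an approximate subgroup (the image of an approximate subgroup) with $p(\Lambda)^2 \subset p(F) p(\Lambda)$ for the same finite $F$ — discreteness of an approximate subgroup upgrades to uniform discreteness by covering a neighbourhood of $e$ by finitely many translates. For $(2) \Rightarrow (3)$: assuming $p(\Lambda)$ is uniformly discrete, I want to show $\Lambda^2 \cap N$ is discrete and co-compact-in-measure in $N$. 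Discreteness is inherited from $\Lambda$ being discrete; the content is the co-volume. Here I would push $\Lambda \mathcal{F} = G$ (for $\mathcal{F}$ of finite Haar measure) down to $G/N$ and disintegrate the Haar measure of $G$ along the fibration $N \to G \to G/N$: uniform discreteness of $p(\Lambda)$ lets me control, fibre by fibre, that the mass of $\Lambda^2 \cap N$ times a fixed fundamental-domain-like set in $N$ already covers $N$ up to a set of finite measure; the finite set $F$ with $\Lambda^2 \subset F\Lambda$ is what converts "$\Lambda$ meets a given $N$-coset" into "$\Lambda^2 \cap N$ is non-trivial and controlled" after translating back into $N$. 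The implication $(3) \Rightarrow (1)$ is the reverse: given $\Lambda^2 \cap N$ an approximate lattice in $N$, one combines a finite-measure fundamental domain for $\Lambda^2 \cap N$ in $N$ with a Borel section of $G \to G/N$ to manufacture a finite-measure set $\mathcal{F}'$ in $G/N$ with $p(\Lambda)\mathcal{F}' = G/N$, while uniform discreteness of $p(\Lambda)$ follows from discreteness of $\Lambda$ together with the fact that $\Lambda^2 \cap N$, being relatively dense in $N$, cannot let distinct $N$-cosets hit $\Lambda$ arbitrarily close to each other.

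For the final sentence — the uniform case with $N$ merely closed, not normal — the point is that when $\Lambda$ is a \emph{uniform} approximate lattice there is a compact $\mathcal{K}$ with $\Lambda \mathcal{K} = G$, and one can work with the homogeneous space $G/N$ (a locally compact space carrying a $G$-quasi-invariant measure, Weil's formula applying up to a modular correction) in place of the quotient group. The equivalence of $(2)$ and $(3)$ then runs on the same lines: $p(\Lambda)$ uniformly discrete in $G/N$ $\Leftrightarrow$ the $N$-orbit map is proper on $\Lambda$ modulo the finite ambiguity coming from $F$, which is exactly what is needed to transfer compactness of a fundamental domain across the fibration $N \hookrightarrow G \twoheadrightarrow G/N$.

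The main obstacle I anticipate is the bookkeeping in $(2) \Rightarrow (3)$: making precise "$\Lambda$ meets every $N$-coset relatively densely" and converting that into relative density of $\Lambda^2 \cap N$ \emph{inside} $N$. The subtlety is that $\Lambda$ itself need not be contained in $N \cdot (\text{finite set})$, so one genuinely needs the approximate-group axiom $\Lambda^2 \subset F\Lambda$: given $\lambda_1, \lambda_2 \in \Lambda$ in the same $N$-coset, $\lambda_1 \lambda_2^{-1} \in \Lambda^2 \cap N$, and one must check that as $\lambda_1$ ranges over a compact piece the resulting elements of $\Lambda^2 \cap N$ are relatively dense there — this is where uniform discreteness of $p(\Lambda)$ (bounding how spread out the cosets are) meets the covering $\Lambda\mathcal{F} = G$ (ensuring enough $\lambda$'s land in each coset). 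Since this is precisely the circle of ideas already codified in \cite{machado2019goodmodels} as Proposition~\ref{Proposition: Intersection and projections approximate lattices w/ closed subgroups}, I expect the cited proof to organise exactly this disintegration argument, possibly routing through the good-models machinery to avoid handling the Haar-measure disintegration by hand.
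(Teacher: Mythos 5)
The paper does not actually prove this proposition: it is imported verbatim as ``Prop.\ 6.2'' of \cite{machado2019goodmodels}, and the only proof given nearby is that of the converse statement (Proposition \ref{Proposition: Converse to Intersection and projections approximate lattices w/ closed subgroups}), which the author remarks ``illustrates well the methods involved.'' So there is no in-paper argument to compare against; what follows is an assessment of your sketch on its own terms and against the converse proof that the paper does give.

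Your overall plan --- $(1)\Rightarrow(2)$ by the elementary upgrade from discrete to uniformly discrete for approximate subgroups, $(2)\Rightarrow(3)$ and $(3)\Rightarrow(1)$ by pushing the finite-covolume condition through a Weil-type disintegration of Haar measure along $N\hookrightarrow G\twoheadrightarrow G/N$ --- is consistent with the paper's style (the converse proof uses exactly the $\Lambda^3\mathcal{F}_N\mathcal{F}_{G/N}=G$ manipulation you describe), and $(1)\Rightarrow(2)$ and the discreteness half of $(2)\Rightarrow(3)$ are correct as stated.

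There is, however, one concrete gap. In your $(3)\Rightarrow(1)$ step you justify uniform discreteness of $p(\Lambda)$ by appealing to ``the fact that $\Lambda^2\cap N$, being relatively dense in $N$, cannot let distinct $N$-cosets hit $\Lambda$ arbitrarily close to each other.'' But hypothesis $(3)$ only says $\Lambda^2\cap N$ is an approximate \emph{lattice} in $N$, i.e.\ discrete and of finite covolume; it need not be relatively dense. Relative density is a strictly stronger, \emph{uniform}-approximate-lattice property, and it genuinely fails in the non-uniform case (take $G=G_1\times G_2$, $N=G_2$, $\Lambda=\Lambda_1\times\Lambda_2$ with $\Lambda_2$ a non-uniform lattice such as $\SL_2(\mathbb{Z})\subset\SL_2(\mathbb{R})$: then $\Lambda^2\cap N\supset\Lambda_2^2$ is not relatively dense, yet $p(\Lambda)=\Lambda_1$ is uniformly discrete and the proposition's conclusion holds). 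So that step cannot rest on relative density; you would need to run the uniform-discreteness part of $(3)\Rightarrow(1)$ through the same measure-theoretic disintegration you invoke for covolume (or prove $(3)\Rightarrow(1)$ first and then fall back on the easy $(1)\Rightarrow(2)$), rather than via a relative-density argument. In the final paragraph, where you do restrict to $\Lambda$ uniform and $N$ merely closed, relative density of $\Lambda^2\cap N$ \emph{is} available (uniformity is inherited), so the same heuristic is legitimate there --- the issue is only with using it in the general normal case.
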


Proposition \ref{Proposition: Intersection and projections approximate lattices w/ closed subgroups} admits a useful converse. 

\begin{proposition}\label{Proposition: Converse to Intersection and projections approximate lattices w/ closed subgroups}
Let $\Lambda$ be an approximate \emph{subgroup} in a locally compact group $G$. Let $N$ be a closed normal subgroup and let $p:G \rightarrow G/N$ denote the natural projection. If both $\Lambda^2 \cap N$ and $p(\Lambda)$ are approximate lattices in $N$ and $G/N$ respectively, then $\Lambda$ is an approximate lattice. 
\end{proposition}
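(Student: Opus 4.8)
The plan is to show that $\Lambda$ is both discrete and relatively dense (co-finite-volume) in $G$, deducing each property from the corresponding property of $\Lambda^2 \cap N$ in $N$ and of $p(\Lambda)$ in $G/N$. The key point to keep in mind is that $\Lambda$ is only assumed to be an approximate subgroup, so we must produce discreteness and a finite-measure fundamental domain from scratch, rather than invoking a pre-existing approximate-lattice structure.

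First I would address discreteness. Suppose $\lambda_i \in \Lambda$ is a sequence converging to $e$; I want to show $\lambda_i = e$ eventually. Passing to the quotient, $p(\lambda_i) \to e$ in $G/N$, and since $p(\Lambda)$ is discrete (being an approximate lattice) we get $p(\lambda_i) = e$ eventually, i.e.\ $\lambda_i \in N$ for large $i$. But then $\lambda_i \in \Lambda^2 \cap N$ (in fact in $\Lambda \cap N$), which is discrete, so $\lambda_i = e$ eventually. Thus $\Lambda$ is discrete. (One should note that $\Lambda$, being an approximate subgroup, is symmetric and contains $e$, so this genuinely shows discreteness at the identity, hence everywhere by translation within $\langle \Lambda\rangle$; a short argument using $\Lambda^2 \subset F\Lambda$ upgrades ``no sequence in $\Lambda$ converges to $e$'' to uniform discreteness if needed, but discreteness suffices for the statement.)

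Next I would produce a finite-measure set $\mathcal F \subset G$ with $\Lambda \mathcal F = G$. Let $\mathcal F_N \subset N$ satisfy $(\Lambda^2 \cap N)\mathcal F_N = N$ with $\mu_N(\mathcal F_N) < \infty$, and let $\mathcal F_{G/N} \subset G/N$ satisfy $p(\Lambda)\mathcal F_{G/N} = G/N$ with finite measure; choose a Borel section $\sigma: \mathcal F_{G/N} \to G$ of $p$ with relatively compact image after shrinking (or work with a measurable section on a set of full measure). Set $\mathcal F := \mathcal F_N \cdot \sigma(\mathcal F_{G/N})$. Given $g \in G$, write $p(g) = p(\lambda)\, y$ with $\lambda \in \Lambda$, $y \in \mathcal F_{G/N}$; then $\lambda^{-1} g\, \sigma(y)^{-1} \in N$, so $\lambda^{-1} g\, \sigma(y)^{-1} = \mu\, n$ with $\mu \in \Lambda^2 \cap N$ and $n \in \mathcal F_N$. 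Here is where the approximate-subgroup structure is used: $\Lambda^2 \cap N \subset (\Lambda \cap N')$ up to a finite set, or more simply $\lambda \mu \in \Lambda \cdot \Lambda^2 \subset \Lambda^3 \subset F^2 \Lambda$ for a fixed finite $F$, so $g \in F^2 \Lambda\, \mathcal F_N\, \sigma(\mathcal F_{G/N})$. Replacing $\Lambda$ by $F^2\Lambda$ (still commensurable with $\Lambda$, hence an approximate lattice iff $\Lambda$ is) and absorbing $F^2$ into $\mathcal F$, we get $\Lambda \mathcal F' = G$ with $\mathcal F' = F_0\,\mathcal F_N\,\sigma(\mathcal F_{G/N})$ of finite Haar measure, using that Haar measure on $G$ disintegrates as $\mu_N \otimes \mu_{G/N}$ along $p$ and that $\sigma(\mathcal F_{G/N})$ is chosen with relatively compact (hence finite-measure) image so the product has finite measure. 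Combined with discreteness, this shows $\Lambda$ is an approximate lattice.

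The main obstacle I anticipate is the measure-theoretic bookkeeping in the second step: ensuring the section $\sigma$ can be chosen so that $\mathcal F_N \cdot \sigma(\mathcal F_{G/N})$ is genuinely Borel of finite measure, and correctly invoking the disintegration of Haar measure $\mu_G = \int_{G/N} (\text{translate of }\mu_N)\, d\mu_{G/N}$ — one must be careful that $N$ is normal (so $G/N$ is a group and this disintegration is the standard Weil formula) and that the modular function does not obstruct finiteness, which it does not here since we only need an upper bound on $\mu_G(\mathcal F')$ and can cover $\mathcal F_{G/N}$ by countably many relatively compact pieces on each of which a continuous section exists. The passage from an approximate \emph{subgroup} to commensurating by a finite set to clean up $\Lambda^3 \subset F^2\Lambda$ is routine given the definitions, but it is the reason the hypothesis ``$\Lambda^2 \cap N$'' (rather than $\Lambda \cap N$) appears and must be handled with a little care.
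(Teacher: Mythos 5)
Your proposal follows the same two-step strategy as the paper: deduce discreteness from the discreteness of $p(\Lambda)$ and of $\Lambda^2\cap N$, and produce a finite-measure fundamental domain as the product of fundamental domains for $\Lambda^2\cap N$ in $N$ and for $p(\Lambda)$ in $G/N$ via a Borel section. The structure is correct, and in fact the paper argues exactly this way and cites \cite[\S I]{raghunathan1972discrete} for the measure-theoretic bookkeeping you worry about.

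Two small fixes are in order. For discreteness, you should run the argument for a sequence in $\Lambda^2$ (not merely $\Lambda$) converging to $e$; this is what gives uniform discreteness of $\Lambda$ and your ``upgrade if needed'' remark is precisely the missing line, so write it out. More substantively, in the finite-covolume step you use $\Lambda^3\subset F^2\Lambda$ and then try to ``absorb $F^2$ into $\mathcal F$,'' but $F^2$ sits to the \emph{left} of $\Lambda$, so it cannot be moved into $\mathcal F$ in a non-abelian group, and $F^2\Lambda$ is not itself an approximate subgroup so ``replace $\Lambda$ by $F^2\Lambda$'' does not land you back in the definition. What you want is the right-sided cover: since $\Lambda^2$ is symmetric, $\Lambda^2\subset F\Lambda$ gives $\Lambda^2\subset\Lambda F^{-1}$, whence $\Lambda^3\subset\Lambda F^{-2}$. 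Then $G=\Lambda^3\,\mathcal F_N\,\sigma(\mathcal F_{G/N})\subset\Lambda\,\bigl(F^{-2}\mathcal F_N\,\sigma(\mathcal F_{G/N})\bigr)$ and the set in parentheses has finite Haar measure, which is what is needed.
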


\begin{proof}
Since $\Lambda$ is already an approximate subgroup, it suffices to show that it is uniformly discrete and has finite co-volume. Let $(\lambda_n)_{n \geq 0}$ be a sequence of elements of $\Lambda^2$ converging to $e$. Then $p(\lambda_n)$ converges to $e$ in $p(\Lambda^2)$. But $p(\Lambda^2)$ is discrete. So $\lambda_n \in N$ for $n$ sufficiently large. As $\Lambda^2 \cap N$ is uniformly discrete as well, $\lambda_n = e$ for $n$ sufficiently large. So $\Lambda$ is uniformly discrete. 

Now $p(\Lambda)$ has finite co-volume so there is $\mathcal{F}_{G/N}$ with finite Haar measure such that $p(\Lambda) \mathcal{F}_{G/N} = G/N$. Similarly, there is $\mathcal{F}_{N}$ with finite Haar measure such that $\left(\Lambda^2 \cap N\right) \mathcal{F}_{N} = N$. But, then $\Lambda^3 \mathcal{F}_N \mathcal{F}_{G/N} =G$ - where $\mathcal{F}_{G/N}$ is identified with a Borel section in $G$ - and $\mathcal{F}_N \mathcal{F}_{G/N}$ has finite Haar measure, see e.g.  \cite[\S I]{raghunathan1972discrete}. 
\end{proof}

\begin{remark}
The proof of Proposition \ref{Proposition: Converse to Intersection and projections approximate lattices w/ closed subgroups} illustrates well the methods involved in the proof of Proposition \ref{Proposition: Intersection and projections approximate lattices w/ closed subgroups}.
\end{remark}

Proposition \ref{Proposition: Intersection and projections approximate lattices w/ closed subgroups} implies that under a relative density assumption, approximate lattices form a family closed under intersection.

\begin{corollary}[Stability under intersection]\label{Lemma: Stability approximate lattices under intersection ambient group}
Let $\Lambda$ be an approximate lattice in a locally compact group $G$. Let $H_1, H_2$ be closed subgroups such that $\Lambda^2 \cap H_i$ is a uniform approximate lattice in $H_i$ for $i=1,2$. Then $\Lambda^2 \cap H_1 \cap H_2$ is a uniform approximate lattice. 
\end{corollary}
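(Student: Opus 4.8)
The strategy is to push everything through Proposition~\ref{Proposition: Intersection and projections approximate lattices w/ closed subgroups}, exploiting that the subgroup $H_1\cap H_2$ is detected by the pair of quotient maps onto $G/H_1$ and $G/H_2$.

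Write $p_i\colon G\to G/H_i$ and $p\colon G\to G/(H_1\cap H_2)$ for the quotient maps onto the relevant homogeneous spaces. Applying Proposition~\ref{Proposition: Intersection and projections approximate lattices w/ closed subgroups} to $\Lambda$ and the closed subgroup $H_i$, the hypothesis that $\Lambda^2\cap H_i$ is a uniform approximate lattice in $H_i$ --- which is condition~(3) --- is equivalent to condition~(2): the set $p_i(\Lambda)$ is uniformly discrete in $G/H_i$, for $i=1,2$.

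The crux of the proof is then an elementary observation about these homogeneous spaces. The map
$$\iota\colon\ G/(H_1\cap H_2)\longrightarrow G/H_1\times G/H_2,\qquad g(H_1\cap H_2)\longmapsto(gH_1,\,gH_2),$$
is well defined, injective, continuous and equivariant for the diagonal $G$-action on the target. One has $\iota(p(\Lambda))=\{(p_1(\lambda),p_2(\lambda)):\lambda\in\Lambda\}\subseteq p_1(\Lambda)\times p_2(\Lambda)$. A product of two uniformly discrete subsets is uniformly discrete for the diagonal action, so $p_1(\Lambda)\times p_2(\Lambda)$ --- and hence its subset $\iota(p(\Lambda))$ --- is uniformly discrete in $G/H_1\times G/H_2$; and uniform discreteness is inherited along any injective, continuous, $G$-equivariant map, so $p(\Lambda)$ is uniformly discrete in $G/(H_1\cap H_2)$.

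Since $H_1\cap H_2$ is a closed subgroup of $G$ and condition~(2) of Proposition~\ref{Proposition: Intersection and projections approximate lattices w/ closed subgroups} has just been verified for $\Lambda$ and $H_1\cap H_2$, condition~(3) holds too: $\Lambda^2\cap H_1\cap H_2$ is a uniform approximate lattice in $H_1\cap H_2$, as desired. The only step requiring genuine care, and the one I expect to be the main obstacle, is the behaviour of uniform discreteness in the spaces $G/H_i$ and $G/(H_1\cap H_2)$ --- its stability under finite products (with respect to the diagonal action) and its pull-back along $\iota$ --- both of which are checked directly from the definition; it is precisely here that one must be careful because the $H_i$ need not be normal. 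Everything else is bookkeeping with Proposition~\ref{Proposition: Intersection and projections approximate lattices w/ closed subgroups}.
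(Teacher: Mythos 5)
Your proof is the intended one: the paper presents this corollary as a direct consequence of Proposition~\ref{Proposition: Intersection and projections approximate lattices w/ closed subgroups}, and your route --- translate each hypothesis into uniform discreteness of $p_i(\Lambda)$, combine, translate back --- is exactly what the lead-in ``under a relative density assumption, approximate lattices form a family closed under intersection'' suggests. The core observation, that uniform discreteness of $p_1(\Lambda)$ and $p_2(\Lambda)$ forces uniform discreteness of $p(\Lambda)$ in $G/(H_1\cap H_2)$, is correct; you verify it via the injection into $G/H_1\times G/H_2$, but it drops out even more directly by intersecting the two identity neighbourhoods in $G$ that witness the individual uniform discretenesses, since $U(H_1\cap H_2)\subset U_1H_1\cap U_2H_2$ when $U\subset U_1\cap U_2$.

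There is, however, a point you pass over in silence. You invoke the equivalence (2)$\Leftrightarrow$(3) of Proposition~\ref{Proposition: Intersection and projections approximate lattices w/ closed subgroups} three times for closed subgroups ($H_1$, $H_2$, and $H_1\cap H_2$) that are \emph{not} assumed normal. The proposition only asserts this equivalence for non-normal closed subgroups under the additional hypothesis that $\Lambda$ itself is a \emph{uniform} approximate lattice, whereas the corollary only assumes $\Lambda$ is an approximate lattice and places the uniformity on the intersections $\Lambda^2\cap H_i$. Your proof therefore implicitly uses the uniform case of the proposition without noting that this hypothesis has to come from somewhere. This should be flagged: either the statement of the corollary should be read with $\Lambda$ uniform (which is consistent with how it is applied in the paper), or one needs a direct argument that the directions you use still hold when only $\Lambda^2\cap H_i$ are assumed uniform; the latter is not immediate, particularly for the step (2)$\Rightarrow$(3) applied to $H_1\cap H_2$, where one must produce relative density of $\Lambda^2\cap H_1\cap H_2$ in $H_1\cap H_2$.
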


\subsubsection{Centralisers}\label{Subsubsection: Centralisers}
Uniform approximate lattices intersect centralisers into approximate lattices. This can be extended to intersections with fixators of commensurating automorphisms.

\begin{lemma}\label{Lemma: Intersection with centraliser}
Let $\Lambda$ be a uniform approximate lattice in a locally compact group $G$. Let $\alpha \in \Aut(G)$ be any element commensurating $\Lambda$ (i.e. $\alpha(\Lambda)$ and $\Lambda$ are commensurable). Write $E(\alpha)$ the closed subgroup defined by $\{g \in G: \alpha(g)=g\}$. Then $E(\alpha) \cap \Lambda^2$ is a uniform approximate lattice in $E(\alpha)$. 
\end{lemma}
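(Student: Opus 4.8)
The plan is to check the three defining features of a uniform approximate lattice for $E(\alpha)\cap\Lambda^2$ inside $E(\alpha)$ — being an approximate subgroup, uniform discreteness, and relative density — of which only the last carries any content. First I would note that $\Lambda^2$ is a closed uniformly discrete approximate subgroup (standard, since $\Lambda$ is a uniform approximate lattice) and that $E(\alpha)\cap\Lambda^2$ is an approximate subgroup of $E(\alpha)$ (intersecting an approximate subgroup with a closed subgroup; see \S\ref{Appendix}); uniform discreteness of $E(\alpha)\cap\Lambda^2$ is then inherited from $\Lambda^2$, and finite co-volume follows once relative density is established. I would \emph{not} pass through Proposition \ref{Proposition: Intersection and projections approximate lattices w/ closed subgroups}: because $E(\alpha)$ is generally not normal in $G$, it is cleaner to exhibit a relatively compact fundamental set for $E(\alpha)\cap\Lambda^2$ in $E(\alpha)$ by hand, in the spirit of Bieberbach's commutator manipulations.

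The device I would use is the \emph{$\alpha$-displacement} map $\psi\colon G\to G$, $\psi(g):=\alpha(g)g^{-1}$. The point is its fibre structure: from $\psi(gh)=\alpha(g)\alpha(h)h^{-1}g^{-1}$ one reads off that $\psi$ is constant on every left coset $gE(\alpha)$, and $\psi(g_1)=\psi(g_2)$ conversely forces $\alpha(g_1^{-1}g_2)=g_1^{-1}g_2$; so the fibres of $\psi$ are \emph{exactly} the left cosets of $E(\alpha)$, with $\psi^{-1}(e)=E(\alpha)$. (It is important to use $\alpha(g)g^{-1}$ rather than $g^{-1}\alpha(g)$, whose fibres would be cosets only up to conjugacy — which would not help, $E(\alpha)$ not being normal.) Being built from the continuous $\alpha$, $\psi$ is continuous.

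For relative density I would fix a compact $K$ with $G=K\Lambda$ and, for $h\in E(\alpha)$, write $h=k\lambda_0$ with $k\in K$, $\lambda_0\in\Lambda$. Applying $\alpha$ to $h=\alpha(h)$ gives $\psi(\lambda_0)=\alpha(k)^{-1}k$, so $\psi(\lambda_0)$ lies in the fixed compact set $\alpha(K)^{-1}K$; on the other hand, since $\alpha$ commensurates $\Lambda$ we have $\alpha(\Lambda)\subseteq F\Lambda$ for a finite $F$, so $\psi(\lambda_0)\in\alpha(\Lambda)\Lambda\subseteq F\Lambda^2$, a finite union of translates of the closed uniformly discrete set $\Lambda^2$ and hence closed and discrete. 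Therefore $\psi(\lambda_0)$ takes only finitely many values $y_1,\dots,y_m$ (a closed discrete set meets a compact set in a finite set). I would then pick, once and for all, $x_j\in\psi^{-1}(y_j)\cap\Lambda$ whenever this is non-empty; for a given $h$, with $\psi(\lambda_0)=y_j$, the fibre description yields $x_j^{-1}\lambda_0\in E(\alpha)$, and of course $x_j^{-1}\lambda_0\in\Lambda^2$, so $\mu:=x_j^{-1}\lambda_0\in E(\alpha)\cap\Lambda^2$. Then $h=(kx_j)\mu$ with $kx_j\in K\{x_1,\dots,x_m\}$ compact and $kx_j=h\mu^{-1}\in E(\alpha)$, hence $kx_j$ lies in the compact set $K_0:=E(\alpha)\cap K\{x_1,\dots,x_m\}$. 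So every $h\in E(\alpha)$ lies in $K_0\,(E(\alpha)\cap\Lambda^2)$, giving the desired relative density.

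The step I expect to be the crux is the double pinch on $\psi(\lambda_0)$: it is bounded, from the $\alpha$-invariance of $h$ together with relative density of $\Lambda$, and it lies in a discrete set, from the commensuration hypothesis via $\alpha(\Lambda)\subseteq F\Lambda$ — so it can take only finitely many values, which is exactly Bieberbach's `finitely many displacements' phenomenon and the engine of the proof. The remaining points (the fibre computation for $\psi$; that $\Lambda^2$, hence $F\Lambda^2$, is closed and discrete; that $E(\alpha)\cap\Lambda^2$ is an approximate subgroup) are routine. Specialising $\alpha$ to conjugation by a commensurating element recovers the statement about centralisers announced in \S\ref{Subsubsection: Centralisers}.
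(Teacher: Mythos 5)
Your proof is correct, but it takes a genuinely different and more self-contained route than the paper. The paper's proof is three lines: it observes that the displacement set $\{\alpha(\lambda)\lambda^{-1}:\lambda\in\Lambda\}$ is uniformly discrete (by commensuration, exactly your ``discrete pinch''), deduces that the image of $\Lambda$ in $G/E(\alpha)$ is locally finite, and then invokes Proposition~\ref{Proposition: Intersection and projections approximate lattices w/ closed subgroups} to conclude. Crucially, your stated reason for avoiding that proposition --- that $E(\alpha)$ is not normal --- does not hold up: the proposition's final sentence explicitly upgrades the equivalence of (2) and (3) to arbitrary \emph{closed} subgroups when $\Lambda$ is uniform, and this is exactly the form the paper uses. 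So the paper's route is available and shorter. What your argument buys instead is a transparent, self-contained construction: the fibre analysis of $\psi(g)=\alpha(g)g^{-1}$, the double pinch on $\psi(\lambda_0)$ (bounded via $h=\alpha(h)$ together with $G=K\Lambda$, discrete via $\alpha(\Lambda)\subseteq F\Lambda$), and the explicit compact set $K_0=E(\alpha)\cap K\{x_1,\dots,x_m\}$ with $E(\alpha)=K_0\,(E(\alpha)\cap\Lambda^2)$. This is essentially a direct reproof of the relevant half of Proposition~\ref{Proposition: Intersection and projections approximate lattices w/ closed subgroups} specialised to $N=E(\alpha)$, and it makes the Bieberbach ``finitely many displacements'' mechanism explicit --- which the paper keeps implicit behind the citation. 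Both are valid; the paper's is shorter given its stated toolkit, yours is more illuminating about where the hypotheses enter.
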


\begin{proof}
Since $\alpha(\Lambda)$ is commensurable with $\Lambda$, the subset $\{\alpha(\lambda)\lambda^{-1}: \lambda \in \Lambda\}$ is uniformly discrete. So the projection of $\Lambda$ to $G/E(\alpha)$ is locally finite. By Proposition \ref{Proposition: Intersection and projections approximate lattices w/ closed subgroups}, $\Lambda^2 \cap E(\alpha)$ is a uniform approximate lattice in $E(\alpha)$. 
\end{proof}

Considering inner automorphisms, Lemma \ref{Lemma: Intersection with centraliser} yields: 

\begin{lemma}[Corollary 6.4, \cite{machado2019goodmodels}]\label{Lemma: Intersection with centre}
Let $\Lambda$ be a uniform approximate lattice in an $S$-adic linear group $G$ and suppose that $\langle \Lambda \rangle$ is Zariski-dense. Then $Z_G \cap \Lambda^2$ is a uniform approximate lattice in $Z_G$. 
\end{lemma}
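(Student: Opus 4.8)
The plan is to deduce this from Lemma~\ref{Lemma: Intersection with centraliser} applied to a finite family of inner automorphisms, together with the descending chain condition on Zariski-closed subsets and the stability statement of Corollary~\ref{Lemma: Stability approximate lattices under intersection ambient group}.

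First I would check that for every $\lambda \in \Lambda$ the inner automorphism $\alpha_\lambda \colon x \mapsto \lambda x \lambda^{-1}$ commensurates $\Lambda$. Indeed $\Lambda = \Lambda^{-1}$ contains both $\lambda$ and $\lambda^{-1}$, so $\alpha_\lambda(\Lambda) \subset \Lambda^3$, and since $\Lambda$ is an approximate subgroup $\Lambda^3 \subset F\Lambda$ for a finite set $F$; running the same argument for $\alpha_\lambda^{-1}$ and taking inverses (using $\Lambda^{-1}=\Lambda$ and $\alpha_\lambda(\Lambda)^{-1}=\alpha_\lambda(\Lambda)$) gives the two-sided commensurability, so $\lambda \in \Comm_G(\Lambda)$. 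The fixed-point subgroup of $\alpha_\lambda$ is the centraliser $C_G(\lambda)$, so Lemma~\ref{Lemma: Intersection with centraliser} yields that $C_G(\lambda) \cap \Lambda^2$ is a uniform approximate lattice in $C_G(\lambda)$ for every $\lambda \in \Lambda$.

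Next I would cut this down to a finite intersection. Each $C_G(\lambda)$ is Zariski-closed in $G$, so by the descending chain condition there are $\lambda_1,\dots,\lambda_k \in \Lambda$ with $\bigcap_{i=1}^k C_G(\lambda_i) = \bigcap_{\lambda \in \Lambda} C_G(\lambda)$. Since ``centralising a fixed element'' is a Zariski-closed condition and centralisers are subgroups, the right-hand side equals the centraliser of the Zariski-closure of $\langle \Lambda \rangle$, which by the Zariski-density hypothesis is $C_G(G) = Z_G$. Then I would feed the centralisers $C_G(\lambda_1),\dots,C_G(\lambda_k)$ into Corollary~\ref{Lemma: Stability approximate lattices under intersection ambient group}, applied successively to the pairs $\big(C_G(\lambda_1)\cap\dots\cap C_G(\lambda_j),\, C_G(\lambda_{j+1})\big)$ for $j = 1,\dots,k-1$: at each stage the first member is a closed subgroup for which the previous step has already produced a uniform approximate lattice of the form $\Lambda^2 \cap (\,\cdot\,)$, so the hypotheses are met, and after $k-1$ steps one obtains that $\Lambda^2 \cap Z_G = \Lambda^2 \cap C_G(\lambda_1)\cap\dots\cap C_G(\lambda_k)$ is a uniform approximate lattice in $Z_G$.

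The argument is short and the potential pitfall is minor: an inner automorphism by an element of $\Lambda$ need not stabilise $\Lambda$, only commensurate it, which is precisely why Lemma~\ref{Lemma: Intersection with centraliser} (rather than a centraliser statement for genuine lattices) is the right tool; the only other thing to be careful about is that Corollary~\ref{Lemma: Stability approximate lattices under intersection ambient group} can be iterated, which works because its conclusion is exactly the hypothesis needed to chain it with the next centraliser.
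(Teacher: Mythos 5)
Your proposal is correct and is exactly the argument the paper intends: the lemma is stated as a consequence of Lemma~\ref{Lemma: Intersection with centraliser} ``considering inner automorphisms,'' and your write-up supplies the implicit details (inner automorphisms by elements of $\Lambda$ commensurate $\Lambda$, the descending chain condition reduces $Z_G=C_G(\langle\Lambda\rangle)$ to a finite intersection of centralisers, and Corollary~\ref{Lemma: Stability approximate lattices under intersection ambient group} is iterated) in the same way the paper does in the analogous Lemma~\ref{Lemma: Intersection with centraliser of kernel}.
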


The following variations of Lemma \ref{Lemma: Intersection with centre} is particularly useful. 

\begin{lemma}\label{Lemma: Intersection with centraliser of kernel}
Let $\Lambda$ be an approximate lattice in an $S$-adic algebraic group $G$. Suppose that $\langle \Lambda \rangle$ is Zariski-dense in $G$. Let $p:G \rightarrow L$ be a regular factor map with $L$ reductive. Suppose that $\ker p \cap \langle \Lambda \rangle$ is Zariski-dense in $\ker p$. Define the subgroup $C_L$ by $ C_G(\ker p) \cap \Rad(G)$. Then $C_L \cap \Lambda^2$ is a uniform approximate lattice in $C_L$. 
\end{lemma}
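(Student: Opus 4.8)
The plan is to deduce this from Lemma~\ref{Lemma: Intersection with centraliser} (the fixator statement) combined with the stability-under-intersection principle (Corollary~\ref{Lemma: Stability approximate lattices under intersection ambient group}), after first upgrading the hypotheses so that the machinery applies. First I would observe that $C_G(\ker p)$ is a Zariski-closed subgroup of $G$, being the centraliser of a subset, and that it is normalised by $G$ since $\ker p$ is normal; thus $C_G(\ker p)\cap\Rad(G)$ is a well-defined Zariski-closed subgroup. The key point is to realise $C_G(\ker p)$ as an intersection of fixators of commensurating automorphisms. Concretely, because $\langle\Lambda\rangle\cap\ker p$ is Zariski-dense in $\ker p$, centralising $\ker p$ is the same as centralising the subgroup $\langle\Lambda\rangle\cap\ker p$ (a Zariski-density argument: the centraliser of a set equals the centraliser of its Zariski-closure). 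So $C_G(\ker p)=\bigcap_{\gamma}E(\mathrm{Ad}(\gamma))$ where $\gamma$ ranges over a (finite, by the descending chain condition on Zariski-closed subsets) generating set of elements of $\langle\Lambda\rangle\cap\ker p$, and $E(\mathrm{Ad}(\gamma))=C_G(\gamma)$.

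Next I would check the commensuration hypothesis needed to invoke Lemma~\ref{Lemma: Intersection with centraliser}: for $\gamma\in\langle\Lambda\rangle$, conjugation by $\gamma$ commensurates $\Lambda$. This is a standard fact — $\gamma$ lies in the subgroup generated by $\Lambda$, hence $\gamma\Lambda\gamma^{-1}$ is commensurable with $\Lambda$ (one can see this directly, or cite that $\langle\Lambda\rangle\subseteq\Comm_G(\Lambda)$). Here I must be careful about uniformity: Lemma~\ref{Lemma: Intersection with centraliser} requires $\Lambda$ uniform, whereas the statement only says ``approximate lattice''. I would either add the uniformity hypothesis implicitly carried from the ambient context (the ultimate applications — Proposition~\ref{Proposition: Unipotent radical is hereditary, intro} — concern uniform approximate lattices in solvable groups), or, more cleanly, pass to $\Rad(G)$ first: since we only want the intersection with $\Rad(G)$, and $\Rad(G)\cap\Lambda^2$ is a uniform approximate lattice in $\Rad(G)$ under the Zariski-density hypothesis (this is exactly the content that will be established for the radical — cf. part (1) of Proposition~\ref{Proposition: Unipotent radical is hereditary, intro}, or one invokes amenability of $\Rad(G)$ via \cite{machado2019goodmodels}), one replaces $G$ by $\Rad(G)$ and $\Lambda$ by the uniform approximate lattice $\Lambda^2\cap\Rad(G)$.

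With the setup in place, the argument runs: for each of the finitely many generators $\gamma_1,\dots,\gamma_k$ of a Zariski-dense subgroup of $\ker p$ lying in $\langle\Lambda\rangle$, Lemma~\ref{Lemma: Intersection with centraliser} (applied inside $\Rad(G)$ with the automorphism $\mathrm{Ad}(\gamma_i)$, which commensurates the uniform approximate lattice) gives that $\Lambda'^2\cap C_{\Rad(G)}(\gamma_i)$ is a uniform approximate lattice in $C_{\Rad(G)}(\gamma_i)$, where $\Lambda'=\Lambda^2\cap\Rad(G)$. Then I iterate Corollary~\ref{Lemma: Stability approximate lattices under intersection ambient group} $k-1$ times to conclude that $\Lambda'^2\cap\bigcap_i C_{\Rad(G)}(\gamma_i)=\Lambda'^2\cap C_{\Rad(G)}(\ker p)=\Lambda'^2\cap C_L$ is a uniform approximate lattice in $C_L=C_G(\ker p)\cap\Rad(G)$. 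A final bookkeeping remark: replacing $\Lambda$ by $\Lambda^2\cap\Rad(G)$ only changes the relevant powers of $\Lambda$ by a bounded factor, so $\Lambda^2\cap C_L$ is commensurable with $\Lambda'^{2}\cap C_L$ up to passing to a larger power, and it too is a uniform approximate lattice (using that commensurable subsets of a locally compact group are simultaneously uniform approximate lattices).

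The main obstacle I anticipate is the reduction to the uniform setting and the passage to $\Rad(G)$: one has to make sure that $\Lambda^2\cap\Rad(G)$ is genuinely a uniform approximate lattice in $\Rad(G)$ before Lemma~\ref{Lemma: Intersection with centraliser} can be applied, and that the Zariski-density hypothesis on $\ker p\cap\langle\Lambda\rangle$ survives this reduction (it does, since $\ker p\subseteq\Rad(G)$ — here one uses that $\ker p$ is solvable and normal because $L$ is reductive, so $\ker p$ contains the unipotent radical and the extra hypothesis that $p$ factors through the reductive quotient forces $\ker p\subseteq\Rad(G)$, or this is simply part of the ambient assumptions). The commutator/centraliser manipulations themselves and the descending-chain-condition argument for finiteness of the generating set are routine.
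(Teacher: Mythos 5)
Your argument is correct and is essentially the paper's own proof, only written out in full: the paper likewise uses the descending chain condition to replace $C_G(\ker p)$ by $C_G(\gamma_1,\dots,\gamma_n)$ for finitely many $\gamma_i\in\ker p\cap\langle\Lambda\rangle$, invokes Theorem \ref{Theorem: Radical is hereditary} to get the uniform approximate lattice $\Lambda^2\cap\Rad(G)$, and then concludes via Lemma \ref{Lemma: Intersection with centraliser} and Corollary \ref{Lemma: Stability approximate lattices under intersection ambient group}. One inessential slip: your closing claim that $\ker p\subseteq\Rad(G)$ (via ``$\ker p$ is solvable because $L$ is reductive'') is false in general (e.g.\ $G$ a product of two semi-simple factors and $p$ a projection), but nothing in your argument actually uses it --- the $\gamma_i$ need only act on $\Rad(G)$ by conjugation, not lie in it.
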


\begin{proof}
According to the descending chain condition, there is a finite family $X=\{\gamma_1, \ldots, \gamma_n\} \subset \ker p$ such that $C_G(X) = C_G(\ker p)$. But $\Lambda^2 \cap \Rad(G)$ is a uniform approximate lattice. (see Theorem \ref{Theorem: Radical is hereditary} below).  So $C_G(\ker p) \cap \Rad(G) \cap \Lambda^2$ is an approximate lattice in $C_G(\ker p) \cap \Rad(G) $.  
\end{proof}

\begin{corollary}\label{Lemma: Intersection with centraliser of co-rank 1 subgroups}
With $\Lambda \subset G$ as above and $Z_{G,1} :=\bigcap_\pi C_G(\ker \pi)$ where $\pi$ runs through the rank one factors of $G$ such that $\pi(\Lambda)$ is contained in a discrete subgroup. Then $\Lambda^2 \cap Z_{G,1}$ is a uniform approximate lattice in $Z_{G,1}$.
\end{corollary}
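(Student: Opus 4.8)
The plan is to deduce the statement from Lemma \ref{Lemma: Intersection with centraliser of kernel}, applied to each relevant factor, together with the stability of uniform approximate lattices under intersection (Corollary \ref{Lemma: Stability approximate lattices under intersection ambient group}).

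First I would note that the index set is finite: a rank one factor $\pi: G \to L$ has $L$ semisimple, so $\pi$ kills $\Rad(G)$ and factors through the semisimple quotient $G/\Rad(G)$, which is an almost-direct product of finitely many almost-simple $S$-adic factors; up to isogeny a rank one factor is the projection onto one of the finitely many $S$-rank one pieces. Thus $Z_{G,1} = \bigcap_{i=1}^{k} C_G(\ker \pi_i)$ is a finite intersection of Zariski-closed normal subgroups, and it is enough to treat each $\pi_i$ separately and then intersect.

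Second, for a fixed $\pi = \pi_i$ I would verify the hypotheses of Lemma \ref{Lemma: Intersection with centraliser of kernel}. The projection $\pi(\Lambda)$ is an approximate subgroup contained in a discrete subgroup, hence uniformly discrete, so Proposition \ref{Proposition: Intersection and projections approximate lattices w/ closed subgroups} gives that $\Lambda^2 \cap \ker \pi$ is an approximate lattice in $\ker \pi$. Since a relatively dense subset of $\ker \pi$ lying in a Zariski-closed subgroup forces that subgroup to be cocompact, hence to contain the Zariski-identity component, and since $\langle \Lambda \rangle$ is Zariski-dense in $G$ while its image in $L$ is a discrete relatively dense subgroup, a Borel-density type argument shows that $\ker \pi \cap \langle \Lambda \rangle$ is Zariski-dense in $\ker \pi$. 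Lemma \ref{Lemma: Intersection with centraliser of kernel}, whose proof in turn rests on Theorem \ref{Theorem: Radical is hereditary}, then gives that $\Lambda^2 \cap C_G(\ker \pi) \cap \Rad(G)$ is a uniform approximate lattice in $C_G(\ker \pi) \cap \Rad(G)$.

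Intersecting over $i = 1, \dots, k$ via Corollary \ref{Lemma: Stability approximate lattices under intersection ambient group} then yields that $\Lambda^2 \cap Z_{G,1} \cap \Rad(G)$ is a uniform approximate lattice in $Z_{G,1} \cap \Rad(G)$. The delicate part, which I expect to be the main obstacle, is passing from $Z_{G,1} \cap \Rad(G)$ to $Z_{G,1}$, i.e.\ controlling the image of $Z_{G,1}$ in the semisimple quotient $\overline{G} := G/\Rad(G)$. That image centralises the image of $\ker \pi_i$ for every $i$; since $\overline{\ker \pi_i}$ contains every almost-simple factor of $\overline{G}$ except the rank one one hit by $\pi_i$, this image lies in the almost-direct product of those rank one factors with a finite central subgroup. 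When at least two distinct such factors occur the image is forced into the (finite) centre of $\overline{G}$, so $Z_{G,1} \cap \Rad(G)$ has finite index in $Z_{G,1}$ and the conclusion is immediate; in general one uses that each $\pi_i(\Lambda)$ is discrete and relatively dense, hence a uniform approximate lattice in the corresponding rank one factor, to deduce that the projection of $\Lambda^2 \cap Z_{G,1}$ to $Z_{G,1}/(Z_{G,1} \cap \Rad(G))$ is uniformly discrete; an argument as in the proof of Proposition \ref{Proposition: Converse to Intersection and projections approximate lattices w/ closed subgroups} then upgrades $\Lambda^2 \cap Z_{G,1} \cap \Rad(G)$ to a uniform approximate lattice in $Z_{G,1}$.
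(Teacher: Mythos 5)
Your proposal and the paper's proof diverge at a structural level because the statement as printed and the proof in the paper are internally inconsistent about what $Z_{G,1}$ is. The corollary statement defines $Z_{G,1}$ as $\bigcap_\pi C_G(\ker \pi)$, which in general is \emph{not} contained in $\Rad(G)$, and you follow this literally. The paper's proof, however, opens with ``Recall that $Z_{G,1}$ is defined as the subgroup of $\Rad(G)$ defined by $\langle C_G(N) \cap \Rad(G) : \rank_S G/N \leq 1,\ N \cap \langle \Lambda\rangle \text{ Zariski-dense}\rangle$''--- that is, as the \emph{product} (not intersection) of the normal subgroups $C_G(N)\cap\Rad(G)$, a subgroup of the radical. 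With that definition there is nothing to pass from $\Rad(G)$ to: one picks finitely many generators $N_1,\dots,N_r$ via the descending chain condition, applies Lemma~\ref{Lemma: Intersection with centraliser of kernel} to each, and assembles $\Lambda^2 \cap Z_{G,1}$ as the product $\sum_i(\Lambda^2\cap C_G(N_i)\cap\Rad(G))$ using Proposition~\ref{Proposition: Converse to Intersection and projections approximate lattices w/ closed subgroups}. Your combination step instead invokes Corollary~\ref{Lemma: Stability approximate lattices under intersection ambient group}, which produces $\Lambda^2 \cap \bigcap_i (C_G(\ker\pi_i)\cap\Rad(G))$; this is a genuinely different set from the paper's $Z_{G,1}$, and in general neither contains the other.

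Even granting the intersection reading, the step you flag as delicate is indeed a gap. Your argument for the image of $Z_{G,1}$ in $\overline G = G/\Rad(G)$ only disposes of the case of two or more distinct rank-one pieces; when a single rank-one piece dominates, the sketch ``each $\pi_i(\Lambda)$ is a uniform approximate lattice, hence the projection of $\Lambda^2\cap Z_{G,1}$ to $Z_{G,1}/(Z_{G,1}\cap\Rad(G))$ is uniformly discrete'' is not justified: uniform discreteness of $\pi(\Lambda)$ in $L$ does not directly give uniform discreteness of the projection of $\Lambda^2\cap Z_{G,1}$ to the \emph{image} of $Z_{G,1}$ in $L$ unless one first shows this image is closed and that the relevant restriction of $\pi$ to $Z_{G,1}$ is proper. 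The cleaner route is to adopt the definition actually used in the paper's proof, which stays inside $\Rad(G)$ and makes the passage you are wrestling with unnecessary.
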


\begin{proof}
Recall that $Z_{G,1}$ is defined as the subgroup of $\Rad(G)$ defined by
$$\langle C_G(N) \cap \Rad(G): \rank_S G/N \leq 1,\ N \cap \langle \Lambda \rangle \text{ Zariski-dense}\rangle.$$
But for $N$ as in the definition of $Z_{G,1}$, $C_G(N) \cap \Rad(G) \cap \Lambda^2$ is a uniform approximate lattice according to Lemma \ref{Lemma: Intersection with centraliser of kernel}. If $N_1,\ldots, N_r$ are such that  $\sum_{i=0}^r\left(C_G(N_i) \cap \Rad(G)\right) = Z_{G,1}$. Then $\Lambda^2 \cap Z_{G,1}$ is commensurable with $\sum_{i=0}^r\left(C_G(N_i) \cap \Rad(G) \cap \Lambda^2\right)$ which is a uniform approximate lattice (Proposition \ref{Proposition: Converse to Intersection and projections approximate lattices w/ closed subgroups}).
\end{proof}

\subsubsection{Solvable radicals}\label{Subsubsection: Solvable radical}
In \cite{machado2019goodmodels} we proved the following generalisation of both Bieberbach's and Mostow's results: 

\begin{theorem}[Theorem 1.9, \cite{machado2019goodmodels}]\label{Theorem: Intersection with amenable radical}
Let $\Lambda$ be an approximate lattice in a locally compact group $G$. Let $A$ be an amenable closed normal subgroup of $G$. Suppose that $G/A$ is the group of points of an almost simple algebraic group defined over a local field or a finite product of such groups. Suppose also that the projection of $\Comm_{G}(\Lambda)$ to each compact factor of $G/A$ is dense. Then $\Lambda^2 \cap A$ is an approximate lattice in $A$.
\end{theorem}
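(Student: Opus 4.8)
The plan is to reduce the statement, by means of Proposition~\ref{Proposition: Intersection and projections approximate lattices w/ closed subgroups}, to the uniform discreteness of the image of $\Lambda$ in the semisimple quotient $L:=G/A$, and then to establish that uniform discreteness by the commutator-plus-Tits-alternative method underlying the lattice theorems of Bieberbach, Mostow~\cite{MR0289713} and Breuillard--Gelander~\cite{MR2373146}, transposed to the approximate setting. Let $p\colon G\to L$ be the quotient map, so $p(\Lambda)$ is an approximate subgroup of $L$ with $p(\Lambda)^{-1}p(\Lambda)=p(\Lambda^2)$. By Proposition~\ref{Proposition: Intersection and projections approximate lattices w/ closed subgroups} applied with $N=A$, the conclusion is equivalent to the assertion that $p(\Lambda^2)$ does not accumulate at the identity of $L$; so suppose, towards a contradiction, that there are $g_n\in p(\Lambda^2)\setminus\{e\}$ with $g_n\to e$. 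Replacing $G$ by the preimage of the Zariski closure of $\langle p(\Lambda)\rangle$ and enlarging $A$ by the preimage of the solvable radical of that closure---an extension of an amenable group by an amenable group is amenable, and the hypotheses (including the density of $\Comm_G(\Lambda)$ in the compact factors of the quotient) are inherited---we may assume $\langle p(\Lambda)\rangle$ is Zariski-dense in $L$.

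The next step is a commutator descent. Fix a finite $F\subset\langle p(\Lambda)\rangle$ generating a Zariski-dense subgroup. For $h\in F$, the commutator $[g_n,h]$ lies in a fixed power $p(\Lambda)^{C}$ and $[g_n,h]\to e$. If for every $h\in F$ one had $[g_n,h]=e$ for all large $n$, then each such $g_n$ would centralise a Zariski-dense subgroup of the semisimple group $L$, hence lie in the finite centre $Z(L)$; since $g_n\to e$ this forces $g_n=e$ for large $n$, a contradiction. Therefore, passing to a subsequence and replacing $g_n$ by $[g_n,h]$ for a suitable fixed $h\in F$, we keep a sequence of nontrivial elements of $p(\Lambda)^{C}$ converging to $e$; iterating, $\langle p(\Lambda)\rangle$ is a Zariski-dense, non-discrete subgroup of $L$ having nontrivial elements of a fixed power of $p(\Lambda)$ in every neighbourhood of the identity.

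I would then feed this into a strengthening of the Tits alternative for linear groups over local fields (Breuillard; Breuillard--Gelander~\cite{MR2373146}): for a suitable neighbourhood $\Omega$ of $e$ and integer $m$ depending only on $L$, every symmetric set $\Sigma\ni e$ generating a Zariski-dense subgroup of $L$ satisfies either (a) $\langle\Sigma^{m}\cap\Omega\rangle$ lies in a proper amenable algebraic subgroup of $L$, or (b) $\Sigma^{m}$ contains two elements generating a free group, which may be chosen arbitrarily close to $e$. Apply this with $\Sigma=p(\Lambda)^{k}$ for $k$ large. In case (a), $\langle\Sigma^{m}\cap\Omega\rangle$ lies in an amenable algebraic subgroup $B\subsetneq L$, so $p^{-1}(B)$ is an amenable closed subgroup of $G$, and $\Lambda^{O(1)}\cap p^{-1}(B)$ is a relatively dense subset of $p^{-1}(B)$ whose image in $B$ accumulates at $e$---impossible, since an approximate lattice in an amenable locally compact group has uniformly discrete image in every algebraic quotient (for the unipotent part this is Proposition~\ref{Proposition: Meyer's theorem $S$-adic unipotent group} together with Proposition~\ref{Proposition: Converse to Intersection and projections approximate lattices w/ closed subgroups}; the general amenable case is \cite{machado2019goodmodels}). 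In case (b), lifting such a free pair $a,b\in p(\Lambda)^{km}$, as close to $e$ as we wish, to $\widetilde a,\widetilde b\in\Lambda^{km}$, we obtain a subgroup of $\langle\Lambda\rangle$ surjecting onto a free group whose image in $L$ is arbitrarily small; combined with the facts that $\Lambda$ is uniformly discrete and of finite covolume in $G$ and that $\ker p=A$ is amenable (so the $A$-coordinates remain controlled), this yields a contradiction. Either way $p(\Lambda)$ is uniformly discrete, and Proposition~\ref{Proposition: Intersection and projections approximate lattices w/ closed subgroups} concludes.

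The main obstacle is precisely this last step: ruling out accumulation at the identity in the semisimple quotient. It requires an approximate-subgroup version of the Breuillard--Gelander dichotomy robust enough to apply to the iterated commutators of $\Lambda$ produced above, together with a genuine use of the amenability of $A$ in a covolume argument---the point being that, modulo an amenable normal subgroup, an approximate lattice cannot have a non-discrete image in a semisimple group. Finally, the compact almost simple factors are what make the separate density hypothesis on $\Comm_G(\Lambda)$ necessary: on such a factor $K$, the set $p_K(\Lambda)$ is an approximate subgroup of the compact group $K$ commensurated by the dense subgroup $p_K(\Comm_G(\Lambda))$, and a commensurated-subgroups argument forces $p_K(\Lambda)$ to be finite, whence uniform discreteness there is automatic; without this hypothesis the statement genuinely fails for compact factors.
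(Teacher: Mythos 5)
This theorem is not proved in the present paper: it is stated and used as an imported result, cited as Theorem~1.9 of \cite{machado2019goodmodels}, and the paper explicitly says ``we obtained in \cite{machado2019goodmodels} the first intersection theorem for approximate lattices.'' So there is no in-paper proof to compare your attempt against. The only guidance the paper gives about the method is the remark, a few paragraphs earlier, that \cite{machado2019goodmodels} follows a Breuillard--Gelander-style strengthening of the Tits alternative; your proposal adopts the same philosophy, so at the level of strategy you are aligned with what the author says was done.

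As a proof sketch, however, two steps in your case analysis do not close as written. In case~(a) you assert that $\Lambda^{O(1)}\cap p^{-1}(B)$ is relatively dense in $p^{-1}(B)$; this is not automatic from $\Lambda$ being an approximate lattice in $G$ together with $B\supset\langle\Sigma^m\cap\Omega\rangle$, and indeed controlling exactly these intersections is the content of the theorem you are trying to prove. The subsequent appeal to the statement that an approximate lattice in an amenable group has uniformly discrete image in every quotient is itself a result of \cite{machado2019goodmodels} of comparable depth, so invoking it here is uncomfortably close to circular. In case~(b), upgrading a pair of elements of $p(\Lambda)^{km}$ that freely generate a small free subgroup of $L$ into a contradiction with the finite covolume of $\Lambda$ requires a genuine quantitative argument; for lattices this is a growth-versus-volume comparison, and for approximate subsets (which are not groups) the comparison is substantially more delicate than ``the $A$-coordinates remain controlled.'' You flag this as the main obstacle, which is honest, but as it stands the proposal establishes the reduction to uniform discreteness of $p(\Lambda)$ (via Proposition~\ref{Proposition: Intersection and projections approximate lattices w/ closed subgroups}) and the commutator descent to a non-trivially accumulating sequence inside a fixed power of $p(\Lambda)$, without completing the dichotomy that is meant to rule that sequence out.
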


Specialising Theorem \ref{Theorem: Intersection with amenable radical} to $S$-adic linear groups yields information about the solvable radical. 

\begin{theorem}\label{Theorem: Radical is hereditary}
Let $\Lambda$ be an approximate lattice in an $S$-adic algebraic group $G$ that generates a Zariski-dense subgroup. Then $\Rad(G) \cap \Lambda^2$ is a uniform approximate lattice in $\Rad(G)$.
\end{theorem}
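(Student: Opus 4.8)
The plan is to deduce this from Theorem~\ref{Theorem: Intersection with amenable radical}, whose relevant instance here is $A=\Rad(G)$. The subgroup $\Rad(G)$ is closed, normal and soluble, hence amenable, and $G/\Rad(G)$ is semisimple, so up to a central isogeny — which is harmless both for approximate lattices and for commensurators — it is a finite product of groups of points of almost simple algebraic groups over the fields $\mathbb{Q}_v$, $v\in S$. The point that is not formal is the third hypothesis of Theorem~\ref{Theorem: Intersection with amenable radical}, namely that the projection of $\Comm_G(\Lambda)$ to each compact factor of $G/\Rad(G)$ be dense: although $\langle\Lambda\rangle\subseteq\Comm_G(\Lambda)$ is Zariski-dense, and hence has Zariski-dense image in each such factor, Zariski-density does not imply topological density in an anisotropic semisimple $\mathbb{Q}_v$-group, which may well contain proper Zariski-dense open subgroups. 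I would therefore deal with the compact factors by hand rather than invoke Theorem~\ref{Theorem: Intersection with amenable radical} with $A=\Rad(G)$ directly.

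Let $G_0$ be the preimage in $G$ of the product of the compact factors of $G/\Rad(G)$. Then $G_0$ is closed and normal, it is amenable — being an extension of the soluble group $\Rad(G)$ by a compact group — and $G/G_0$ is semisimple with \emph{no} compact factors. Applying Theorem~\ref{Theorem: Intersection with amenable radical} to $\Lambda$ in $G$ with $A=G_0$, the compact-factor condition now being vacuous, shows that $\Lambda^2\cap G_0$ is an approximate lattice in $G_0$, and it is uniform since $G_0$ is amenable. Moreover $\Rad(G_0)=\Rad(G)$, because $G_0/\Rad(G)$ is compact semisimple and so has trivial radical. By Meyer's theorem for amenable groups, together with the behaviour of good models under quotient maps (\cite{machado2019goodmodels}), the image of $\Lambda^2\cap G_0$ in the \emph{compact} group $G_0/\Rad(G)$ is commensurable with a model set in a compact group, hence finite; in particular this image is uniformly discrete, so Proposition~\ref{Proposition: Intersection and projections approximate lattices w/ closed subgroups}, applied inside $G_0$ to the normal subgroup $\Rad(G)$, gives that $(\Lambda^2\cap G_0)^2\cap\Rad(G)$ is an approximate lattice in $\Rad(G)$.

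It then remains to pass from $(\Lambda^2\cap G_0)^2\cap\Rad(G)$ back to $\Lambda^2\cap\Rad(G)$: the former is squeezed between $\Lambda^2\cap\Rad(G)$ and $\Lambda^4\cap\Rad(G)$, and the commensurability lemmas for the sets $\Lambda^{2k}\cap H$ collected in the appendix (\S\ref{Appendix}) identify all of these up to commensurability, whence $\Lambda^2\cap\Rad(G)$ is an approximate lattice in $\Rad(G)$; it is uniform because $\Rad(G)$ is amenable. I expect the genuine difficulty to be precisely the compact, i.e.\ anisotropic semisimple, factors of $G/\Rad(G)$: the commensurator-density hypothesis of Theorem~\ref{Theorem: Intersection with amenable radical} is not a formal consequence of Zariski-density there, and the detour through the amenable subgroup $G_0$ — where Meyer's theorem is available and images in compact quotients are forced to be finite — is what makes everything go through.
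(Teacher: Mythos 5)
The paper offers no explicit proof here: Theorem~\ref{Theorem: Radical is hereditary} is presented as a direct specialisation of Theorem~\ref{Theorem: Intersection with amenable radical}. Your observation that the commensurator-density hypothesis is not a formal consequence of Zariski-density is correct --- a Zariski-dense subgroup of an anisotropic $p$-adic semisimple group need not be topologically dense, since such groups contain proper Zariski-dense open subgroups --- and the detour through the amenable normal subgroup $G_0$ is a natural move. The first paragraph and the application of Theorem~\ref{Theorem: Intersection with amenable radical} with $A=G_0$ are fine.

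The gap is in the second paragraph, at the claim that the image of $\Lambda^2\cap G_0$ in the compact group $K:=G_0/\Rad(G)$ is ``commensurable with a model set in a compact group, hence finite.'' Meyer's theorem for amenable groups together with the quotient behaviour of good models shows only that this image is \emph{laminar}; a laminar approximate subgroup of a compact group need not be finite --- any dense \emph{subgroup} of $K$ is already laminar (it admits the trivial good model to the one-point group) and is infinite. The equivalence ``laminar $\iff$ commensurable with a model set'' from Proposition~\ref{Proposition: Equivalence good models and model sets}(3) is stated for approximate \emph{lattices}, so it presupposes precisely the uniform discreteness of the image that you need; and by Proposition~\ref{Proposition: Intersection and projections approximate lattices w/ closed subgroups} applied inside $G_0$ to the normal subgroup $\Rad(G)$, uniform discreteness of the image in $K$ is equivalent to the conclusion you are trying to prove. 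So as written the step is circular. Closing it requires genuine algebraic input: even for genuine lattices in Lie groups, the radical is \emph{not} always lattice-hereditary once there are compact semisimple factors (see~\cite{MR3358545}), so amenability and the good-model formalism alone cannot do it. The Zariski-density of $\langle\Lambda\rangle$ --- or an arithmeticity statement for $\Lambda^2\cap G_0$ in the spirit of Proposition~\ref{Proposition: Meyer's theorem $S$-adic unipotent group} or Proposition~\ref{Proposition: Arithmeticity from action on subgroups} --- must enter at exactly this point, and it does not appear in your argument.
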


We prove below that similar properties hold for the nilpotent radical and other large characteristic subgroups.

\subsection{Intersection results for arithmetic subsets}\label{Subsection: S-adic algebraic groups} We defined in \S \ref{Subsection: Matrices with Pisot entries and the Borel--Harish-Chandra theorem for approximate lattices} an interesting family of uniformly discrete approximate subgroups by considering points of linear groups over PVS numbers. We can give another, more flexible, construction with an arithmetic flavour of uniformly discrete approximate subgroups of more general $S$-adic linear groups. 

\begin{definition}[Generalized arithmetic approximate subgroups]\label{Definition: GAAS}
 If $\mathbf{G}$ is a Zariski-connected linear group defined over $\mathbb{Q}$ and $G,H$ are two $S$-adic linear subgroups such that $G \times H \subset \mathbf{G}(\mathbb{A}_S)$, we define a \emph{generalized arithmetic approximate subgroup} (GAAS) as any approximate subgroup $\Lambda$ of $G$ commensurable with $p_{G}\left( \mathbf{G}(\mathbb{Z}_S) \cap G \times W_0\right)$ where $p_{G}: G \times H \rightarrow G$ is the natural projection, $W_0 \subset H$ is a symmetric relatively compact neighbourhood of the identity and $\mathbf{G}(\mathbb{Z}_S)$ is embedded diagonally in $\mathbf{G}(\mathbb{A}_S)$.
 \end{definition}

Generalized arithmetic approximate subgroups are uniformly discrete and extremely regular. In particular, every generalized arithmetic approximate subgroup is laminar. They also satisfy strong forms of intersection theorems, and it is this last property that is of particular interest to us.  

\begin{lemma}[Levi decomposition of GAAS]\label{Lemma: Levi decomposition of GAAS}
With the notation from Definition \ref{Definition: GAAS}. Let $L$ denote the Zariski-connected component of the identity of the Zariski-closure of $\mathbf{G}(\mathbb{Z}_S) \cap G \times H$. Then there is $\mathbf{L} \subset \mathbf{G}$ such that $L=\mathbf{L}(\mathbb{A}_S)$. Assume that the natural projections of $\mathbf{L}(\mathbb{A}_S)$ to $G$ and $H$ are open (equivalently, have finite index). 
There are $S, T$ and $U$ groups appearing in the Levi decomposition of $G$ - with $S$ a semi-simple Levi subgroup, $T$ the maximal torus centralized by $S$ and $U$ the unipotent radical - such that $\Lambda$ is commensurable with $$\left(\Lambda^2 \cap S\right)\left(\Lambda^2 \cap T\right)\left(\Lambda^2 \cap U\right).$$
Moreover, if $\Lambda$ is an approximate lattice, then  $\Lambda^2 \cap S$, $\Lambda^2 \cap T$ and $\Lambda^2 \cap U$ are approximate lattices in $S$, $T$ and $U$. 
\end{lemma}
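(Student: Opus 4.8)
The plan is to analyse the cut-and-project scheme defining $\Lambda$ through the lens of the Levi decomposition of the ambient $\mathbb{Q}$-group $\mathbf{G}$, transporting the decomposition of the arithmetic group $\mathbf{L}(\mathbb{Z}_S)$ down to $\Lambda$ via the projection $p_G$. First I would pin down $\mathbf{L}$: since $L$ is the Zariski-connected component of the Zariski-closure of $\mathbf{G}(\mathbb{Z}_S)\cap (G\times H)$ and $\mathbf{G}(\mathbb{Z}_S)$ lies in $\mathbf{G}(\mathbb{Q})$ embedded diagonally, this Zariski-closure is defined over $\mathbb{Q}$ (Galois descent / the fact that the arithmetic group is Zariski-dense in its closure which is then $\mathbb{Q}$-defined), so $L=\mathbf{L}(\mathbb{A}_S)$ for a Zariski-connected $\mathbb{Q}$-subgroup $\mathbf{L}\subset\mathbf{G}$. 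Note that $\mathbf{L}$ has no nontrivial $\mathbb{Q}$-characters: otherwise such a character would produce, via the product formula as in the proof of the proposition preceding Lemma \ref{Lemma: Levi decomposition of GAAS}, an obstruction to $\mathbf{L}(\mathbb{Z}_S)$ being relatively dense in the relevant factor, contradicting that $\Lambda=p_G(\mathbf{L}(\mathbb{Z}_S)\cap(G\times W_0))$ is relatively dense in (an open finite-index subgroup of) $G$. Hence $\mathbf{L}(\mathbb{Z}_S)$ is an $S$-arithmetic lattice in $\mathbf{L}(\mathbb{A}_S)$ by Borel--Harish-Chandra.

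Next I would run the commutator/Levi machinery on the honest lattice $\Gamma:=\mathbf{L}(\mathbb{Z}_S)$ inside $\mathbf{L}(\mathbb{A}_S)$. Write the $\mathbb{Q}$-Levi decomposition $\mathbf{L}=(\mathbf{S}'\cdot\mathbf{T}')\ltimes\mathbf{U}'$ with $\mathbf{S}'$ semisimple, $\mathbf{T}'$ the central torus, $\mathbf{U}'$ the unipotent radical, all defined over $\mathbb{Q}$ and compatible (via the projection, which is open by hypothesis) with the Levi data $S,T,U$ of $G$. For $S$-arithmetic lattices in $\mathbb{Q}$-groups one has the classical intersection statements: $\Gamma\cap\mathbf{U}'(\mathbb{A}_S)$ is a lattice in the unipotent radical (this is Mostow's theorem, or for $S$-arithmetic groups the standard fact that $\mathbf{U}'(\mathbb{Z}_S)$ is cocompact in $\mathbf{U}'(\mathbb{A}_S)$ since $\mathbf{U}'$ is unipotent), and the image of $\Gamma$ in $\mathbf{L}/\mathbf{U}'$ is again $S$-arithmetic, whence after passing to a finite-index subgroup it splits (up to commensurability and finite covers) as a product of an $S$-arithmetic lattice in the semisimple part and one in the torus part — the latter is $\mathbf{T}'(\mathbb{Z}_S)$, which is a lattice in $\mathbf{T}'(\mathbb{A}_S)$ precisely because $\mathbf{L}$, hence $\mathbf{T}'$ after the character computation, has no $\mathbb{Q}$-characters. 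Thus $\Gamma$ is commensurable with $(\Gamma\cap\mathbf{S}'(\mathbb{A}_S))(\Gamma\cap\mathbf{T}'(\mathbb{A}_S))(\Gamma\cap\mathbf{U}'(\mathbb{A}_S))$, each factor a lattice.

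I would then push this through the window. Since $p_G:\mathbf{L}(\mathbb{A}_S)\to G$ has open (finite-index) image and kernel $\mathbf{L}(\mathbb{A}_S)\cap(\{e\}\times H)$ which is the internal-space direction, the three subgroups $\mathbf{S}'(\mathbb{A}_S)$, $\mathbf{T}'(\mathbb{A}_S)$, $\mathbf{U}'(\mathbb{A}_S)$ map onto (finite-index subgroups of) $S$, $T$, $U$ respectively, and the window $W_0$ in $H$ factors accordingly — here I use that the Levi/torus/unipotent pieces of $\mathbf{L}$ are each defined over $\mathbb{Q}$ so the intersections with $\mathbf{G}(\mathbb{Z}_S)$ are the $\mathbb{Z}_S$-points of the pieces, and that $p_G$ restricted to each piece is again the projection of a cut-and-project scheme of the same type $(\,\cdot\,,\,\cdot\,,\text{$\mathbb{Z}_S$-points})$. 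Consequently $p_G(\Gamma\cap\mathbf{U}'(\mathbb{A}_S)\cap(G\times W_0))$ is commensurable with $\Lambda^2\cap U$ and is a GAAS in $U$, and likewise for $S$ and $T$; combining with the multiplicativity of the commensuration from the previous paragraph and Corollary \ref{Lemma: Stability approximate lattices under intersection ambient group} (to identify $\Lambda^2\cap S$ etc.\ with the relevant projected intersections up to commensurability) gives that $\Lambda$ is commensurable with $(\Lambda^2\cap S)(\Lambda^2\cap T)(\Lambda^2\cap U)$. Finally, if $\Lambda$ is an approximate lattice then each projected factor inherits relative density from the lattice property of $\Gamma\cap(\text{piece})$ — here $\mathbf{S}'(\mathbb{Z}_S)$, $\mathbf{T}'(\mathbb{Z}_S)$, $\mathbf{U}'(\mathbb{Z}_S)$ are lattices in their respective adelic groups — so $\Lambda^2\cap S$, $\Lambda^2\cap T$, $\Lambda^2\cap U$ are approximate lattices in $S$, $T$, $U$, using Proposition \ref{Proposition: Intersection and projections approximate lattices w/ closed subgroups} and its converse to pass between "approximate lattice in the subgroup" and "uniformly discrete projection".

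The main obstacle I expect is the bookkeeping around the fact that $p_G$ and $p_H$ are only open with finite-index image (not isomorphisms onto $G$, $H$): one must track how the Levi decomposition of $\mathbf{L}$, which lives over $\mathbb{Q}$, relates to the Levi data of $G$ chosen abstractly, ensuring the semisimple/torus/unipotent pieces of $\mathbf{L}$ really do project to commensurable copies of $S$, $T$, $U$ (using that any two semisimple Levi subgroups are $U$-conjugate, as recalled in \S\ref{Subsection: $S$-adic linear groups}), and that taking $\mathbb{Z}_S$-points commutes appropriately with these decompositions up to finite index. The character-triviality of $\mathbf{L}$ (and hence of $\mathbf{T}'$) is the other point requiring care, but it follows cleanly from the product-formula argument already used in the excerpt.
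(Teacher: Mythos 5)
Your broad strategy matches the paper: pass to the $\mathbb{Q}$-Levi decomposition of $\mathbf{L}$ and transport it through the cut-and-project scheme. However, there are two substantive problems with the execution.

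First, the crux of the argument is buried in the phrase ``the window $W_0$ in $H$ factors accordingly.'' Knowing that $\Gamma := \mathbf{L}(\mathbb{Z}_S)$ is commensurable with the product $\bigl(\Gamma \cap \mathbf{S}'(\mathbb{A}_S)\bigr)\bigl(\Gamma \cap \mathbf{T}'(\mathbb{A}_S)\bigr)\bigl(\Gamma \cap \mathbf{U}'(\mathbb{A}_S)\bigr)$ is a statement about the abstract arithmetic group; it does not by itself let you pass to the model sets. What you need is the following: writing $H$ with its Levi data $S', T', U'$ (the images of $\mathbf{S}(\mathbb{A}_S)$, $\mathbf{T}(\mathbb{A}_S)$, $\mathbf{U}(\mathbb{A}_S)$ in $H$), there exist compact neighbourhoods of the identity $W_{S'} \subset S'$, $W_{T'} \subset T'$, $W_{U'} \subset U'$ such that whenever $s \in S'$, $t \in T'$, $u \in U'$ satisfy $stu \in W_0$, each factor lies in its respective $W$. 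This is precisely what makes the Levi components of a $\gamma \in \mathbf{G}(\mathbb{Z}_S) \cap (G \times W_0)$ land in model sets with compact windows, and hence gives the inclusion $\Lambda \subset \Lambda_S \Lambda_T \Lambda_U$. The paper derives it from the continuity of the homeomorphism $H \to S'T' \times U'$ together with the properness of the finite covering $S' \times T' \to S'T'$. Without this step your commensuration claim for the model sets does not close; the reverse inclusion (each $\Lambda_S$, $\Lambda_T$, $\Lambda_U$ is covered by finitely many translates of $\Lambda$) is the easy direction.

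Second, the detour through Borel--Harish-Chandra and the character computation is both unnecessary and, as formulated, not available. The main commensurability statement of the lemma does \emph{not} assume that $\Lambda$ is an approximate lattice, so $\Lambda$ need not be relatively dense in (a finite-index subgroup of) $G$; your product-formula argument that $\mathbf{L}$ has no $\mathbb{Q}$-characters therefore cannot get off the ground. Fortunately none of this is required: the decomposition of $\gamma \in \mathbf{L}(\mathbb{Z}_S)$ into $\mathbb{Z}_S$-points of the Levi pieces (up to commensurability) is a fact about $S$-arithmetic groups that does not use any lattice property of $\mathbf{L}(\mathbb{Z}_S)$, and the ``Moreover'' part is obtained much more cheaply from Proposition \ref{Proposition: Intersection and projections approximate lattices w/ closed subgroups} once the GAAS decomposition is in hand (the projections to $S$ and $T$ are uniformly discrete, hence approximate lattices; $U$ is then handled as the kernel of the projection to $ST$). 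Note also that the lemma is existential in $S, T, U$ -- one \emph{defines} them as the Zariski-closures of the projections of the $\mathbb{Q}$-Levi data of $\mathbf{L}$ -- so the ``bookkeeping'' concern you raise about matching to a Levi decomposition chosen in advance evaporates.
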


\begin{proof}
Choose a Levi subgroup $\mathbf{S}$ of $\mathbf{L}$, let $\mathbf{T}$ denote the maximal torus it centralises and le $\mathbf{U}$ be the unipotent radical of $\mathbf{L}$. Define $S,T$ and $U$ as the Zariski-closures of the projections of $\mathbf{S}(\mathbb{A}_S)$, $\mathbf{T}(\mathbb{A}_S)$ and $\mathbf{U}(\mathbb{A}_S)$ to $G$. Then $S$ is semi-simple, $T$ is a torus centralising $S$, $U$ is a unipotent subgroup normalised by $ST$ and $STU$ has finite index in $G$. So $STU$ is a Levi decomposition of $G$. Similarly, the respective projections $S'$, $T'$ and $U'$ to $H$ provide a Levi decomposition of $H$. From the continuity of the Levi decomposition $H \rightarrow S' T' \times  U'$ and the almost-direct product decomposition $S' \times T' \rightarrow S'T'$ we deduce that there are compact neighbourhoods of the identity $W_{S'} \subset S', W_{T'} \subset T'$ and $W_{U'} \subset U'$ such that if $s \in S', t \in T', u \in U'$ are such that $stu \in W_0$,  then $s \in  W_{S'}$, $t \in W_{T'}$ and $u \in W_{U'}$.

Define $p_G: G \times H \rightarrow G$ and $p_H: G \times H \rightarrow H$ the natural projections. Define $\Lambda_S \subset S$, $\Lambda_T \subset T$ and $\Lambda_U \subset U$ as the GAAS defined using $W_{S'},W_{T'}$ and $W_{U'}$ respectively. Each of them is covered by finitely many translates of $\Lambda$. Conversely, if $\lambda \in \Lambda$, let $\gamma$ denote an element in $\mathbf{G}(\mathbb{Z}_S) \cap G \times W_0$ such that $p_G(\gamma)=\lambda$. There are now $s \in \mathbf{S}(\mathbb{Z}_S)$, $t \in \mathbf{T}(\mathbb{Z}_S)$ and $u \in \mathbf{U}(\mathbb{Z}_S)$ such that $stu=\lambda$. By the previous paragraph, $p_H(s) \in W_{S'}$, $p_H(t) \in W_{T'}$ and $p_H(u) \in W_{U'}$. Hence, $\lambda \in \Lambda_S\Lambda_T\Lambda_U$. So the first part of Lemma \ref{Lemma: Levi decomposition of GAAS} is proved.

Suppose that $\Lambda$ is an approximate lattice to begin with. Then the projections to both $S$ and $T$ are uniformly discrete, so they are approximate lattices by Proposition \ref{Proposition: Intersection and projections approximate lattices w/ closed subgroups}. Since the kernel of the projection to $ST$ is $U$, $\Lambda^2 \cap U$ is an approximate lattice as well by Proposition \ref{Proposition: Intersection and projections approximate lattices w/ closed subgroups}.
\end{proof}

\subsection{Arithmeticity from action on subgroups}

To prove that the nilpotent radical is hereditary, we will in fact prove and harness an arithmeticity statement. Our strategy is as follows: we first show that the approximate lattice considered intersects \emph{some} non-trivial normal unipotent subgroup in an approximate lattice; we then use the above extension of Meyer's theorem (Proposition \ref{Proposition: Meyer's theorem $S$-adic unipotent group}) to show that this intersection has an arithmetic origin; finally, we exploit the conjugation action of the whole approximate lattice on the unipotent subgroup to show that that arithmetic structure propagates to a large quotient of $G$ which allows us to conclude.

\begin{proposition}\label{Proposition: Arithmeticity from action on subgroups}
Let $\Lambda$ be an approximate lattice in a unipotent $S$-adic group $U$. Let $G \subset \Aut(U)$ be a Zariski-closed subgroup and let $\Xi \subset G$ be an approximate subgroup of $\Aut(U)$ such that $\bigcup_{\xi \in \Xi} \xi(\Lambda)$ is commensurable with $\Lambda$. We have:
\begin{enumerate}
\item if $\langle \Xi \rangle$ is Zariski-dense in $G$, then $\Xi$ is contained in a generalized arithmetic approximate subgroup of $G$;
\item if $\Xi$ is an approximate lattice of $G$, then $\Xi$ itself is a generalized arithmetic approximate subgroup of $G$;
\item Let $\Lambda$ be an approximate lattice in an $S$-adic affine algebraic group $G$. Suppose that $\Rad(G)=U$. If there is a Levi subgroup $S \subset G$ such that $\Lambda^2 \cap S$ is an approximate lattice, then $\Lambda$ is commensurable with the almost direct product of a generalized arithmetic approximate lattice and a product of rank one lattices. 
\end{enumerate}
\end{proposition}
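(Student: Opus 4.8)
The plan is to prove the three statements in order, using (1) and (2) as stepping stones to (3). For (1), start with the approximate lattice $\Lambda$ in the unipotent $S$-adic group $U$ and apply Proposition \ref{Proposition: Meyer's theorem $S$-adic unipotent group} to obtain the $\mathbb{Q}$-group $\mathbf{U}$, the surjection $\pi \colon \mathbf{U}(\mathbb{A}_S) \to U$, the decomposition $\mathbf{U}(\mathbb{A}_S) = U_1 \times U_2$, and the identification of $\Lambda$ (up to commensurability) with the model set from $(U_1, U_2, \mathbf{U}(\mathbb{Z}_S))$. Each $\xi \in \Xi$ commensurates $\Lambda$ (since $\bigcup_{\xi \in \Xi}\xi(\Lambda)$ is commensurable with $\Lambda$, each individual $\xi(\Lambda)$ is commensurable with $\Lambda$), so by conclusion (5) of that proposition each $\xi$ lifts to a $\mathbb{Q}$-automorphism $\alpha_{\mathbf{U}}(\xi)$ of $\mathbf{U}(\mathbb{A}_S)$ stabilising $U_1$ and $U_2$ and compatible with $\pi$. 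The uniqueness clause in (5) — the lift is determined by the identity $\alpha \circ \pi = \pi \circ \alpha_{\mathbf{U}}$ together with Zariski-density of $\langle \Lambda' \rangle$ — should make $\xi \mapsto \alpha_{\mathbf{U}}(\xi)$ a genuine homomorphism from $\langle \Xi \rangle$ into $\Aut(\mathbf{U})$, which is an algebraic $\mathbb{Q}$-group. The image of $\Xi$ then lies in a set of $\mathbb{Q}$-automorphisms all preserving the lattice $\mathbf{U}(\mathbb{Z}_S)$ up to commensurability, i.e. in $\Comm(\mathbf{U}(\mathbb{Z}_S))$, which is an arithmetic subgroup of $\Aut(\mathbf{U})(\mathbb{Q})$; pulling back the windows shows $\Xi$ is commensurable with the projection of an arithmetic group intersected with a compact window, i.e. a GAAS. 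The identification of $G$ as a quotient/image of (a subgroup of) $\Aut(\mathbf{U})$ follows from Zariski-density of $\langle \Xi \rangle$ in $G$.

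For (2), if $\Xi$ is itself an approximate lattice in $G$, then the bound above is an equality up to commensurability rather than mere containment: a GAAS is by definition an approximate subgroup commensurable with $p_G(\mathbf{G}(\mathbb{Z}_S) \cap G \times W_0)$, and once $\Xi$ has finite covolume the window $W_0$ produced in (1) must cut out something of the right size, so $\Xi$ is exactly a GAAS. This should just be a matter of combining (1) with Proposition \ref{Proposition: Intersection and projections approximate lattices w/ closed subgroups} / the commensurability criteria (Corollary \ref{Lemma: Stability approximate lattices under intersection ambient group} and the towers lemma cited in the proof of Proposition \ref{Proposition: Meyer's theorem $S$-adic unipotent group}).

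For (3), let $\Lambda$ be an approximate lattice in the $S$-adic affine algebraic group $G$ with $\Rad(G) = U$ unipotent, and suppose $S \subset G$ is a Levi subgroup with $\Lambda^2 \cap S$ an approximate lattice. First I would show $\Lambda^2 \cap U$ is an approximate lattice in $U$: by Proposition \ref{Proposition: Intersection and projections approximate lattices w/ closed subgroups} applied to the projection $G \to G/U \cong S$, since the image $p(\Lambda)$ is commensurable with $\Lambda^2 \cap S$ (which is uniformly discrete), $\Lambda^2 \cap U$ is indeed an approximate lattice in $U = \Rad(G)$. Next, the conjugation action of $\langle \Lambda \rangle$ (or of the Levi part) on $U$ gives an approximate subgroup $\Xi$ of $\Aut(U)$ with $\bigcup_{\xi \in \Xi}\xi(\Lambda^2 \cap U)$ commensurable with $\Lambda^2 \cap U$ (conjugates of $\Lambda$ by elements of $\Lambda$ stay in a bounded number of translates of $\Lambda$ because $\Lambda$ is an approximate subgroup), so part (1) applies: $\Xi$ lies in a GAAS, hence the arithmetic structure on $U$ is preserved by the whole of $G$ up to commensurability. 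Finally I would assemble the pieces: $\Lambda$ is commensurable with $(\Lambda^2 \cap S)(\Lambda^2 \cap U)$ by an argument as in Lemma \ref{Lemma: Levi decomposition of GAAS} (the semidirect-product structure $G = S \ltimes U$ together with uniform discreteness); the $U$-part together with the arithmetic action is a generalized arithmetic approximate lattice, and the remaining semi-simple part $\Lambda^2 \cap S$ decomposes — after passing to the commensurator and using that $S$ is semi-simple — into its rank one factors, where approximate lattices are lattices by known results, plus a higher-rank arithmetic part that merges into the GAAS. This last bookkeeping, matching the GAAS of $U \rtimes (\text{higher rank part})$ against the rank one lattices, is where I expect the real work to be.

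\medskip

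The main obstacle, I expect, is verifying that $\xi \mapsto \alpha_{\mathbf{U}}(\xi)$ is well-defined and multiplicative on all of $\langle \Xi \rangle$ — Proposition \ref{Proposition: Meyer's theorem $S$-adic unipotent group}(5) produces a lift for each commensurating automorphism, but one must check the lifts compose correctly and land in a single arithmetic group rather than merely in $\Aut(\mathbf{U})(\mathbb{Q})$; this is exactly where the canonicity statement (Corollary \ref{Remark: Meyer theorem is canonical}) and the Zariski-density of $\langle \Lambda' \rangle$ do the heavy lifting. A secondary difficulty in (3) is handling the torus $T$ from the Levi decomposition: one needs to know $\Lambda^2 \cap T$ is an approximate lattice and that a torus contributes no genuinely new phenomenon beyond what the product-of-rank-one-lattices and GAAS language already captures, which should follow from Lemma \ref{Lemma: Levi decomposition of GAAS} once the arithmetic structure is in place.
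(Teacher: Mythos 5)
Your outline of (1) starts the same way as the paper (apply Proposition \ref{Proposition: Meyer's theorem $S$-adic unipotent group} and lift commensurating automorphisms via its clause (5)), but there is a genuine gap at exactly the point you flag. Saying that the lifted automorphisms ``preserve $\mathbf{U}(\mathbb{Z}_S)$ up to commensurability, i.e. land in $\Comm(\mathbf{U}(\mathbb{Z}_S))$'' gives you nothing in the unipotent setting: since $\mathbf{U}$ is a $\mathbb{Q}$-unipotent group, all its arithmetic subgroups are commensurable, so $\Comm_{\Aut(\mathbf{U})(\mathbb{Q})}(\mathbf{U}(\mathbb{Z}_S))$ is all of $\Aut(\mathbf{U})(\mathbb{Q})$ and carries no arithmetic constraint. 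The tool you reach for, Corollary \ref{Remark: Meyer theorem is canonical}, addresses functoriality under quotients and does not fix this. The right ingredient is Corollary \ref{Corollary: Bounded approximate subgroups of of automorphisms are relatively compact}: it uses the \emph{uniform} hypothesis that $\bigcup_{\xi\in\Xi}\xi(\Lambda)$ is commensurable with $\Lambda$ (not merely that each $\xi$ separately commensurates $\Lambda$) to replace $\Xi$ by a commensurable $\Xi'$ that actually \emph{normalises} $\langle\Lambda\rangle$. Only then does the $\mathbb{Z}_S$-span of $\log\langle\Lambda\rangle$ become $\Xi'$-invariant, putting $\Xi'_{\mathbf{U}}$ inside $\GL_n(\mathbb{Z}_S)$. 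You also omit the second half of what makes the image a GAAS rather than just a subset of an arithmetic group: one must show the projection of $\Xi_{\mathbf{U}}$ to $\Aut(U_2)$ is relatively compact, which the paper deduces from the compactness of the projection of $\Lambda^m$ to $U_2$ together with the same uniform boundedness of $\bigcup_\xi\xi(\Lambda^m)$. ``Pulling back the windows'' does not supply this.

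For (3), your reduction to the unipotent radical and the application of (1)/(2) to the conjugation action is the right idea when $S$ acts faithfully on $U$ (the paper applies (2) to the image of $\Lambda^2\cap S$). However you do not treat the case where the conjugation action $S\to\Aut(U)$ has a kernel $S_0$; this is not bookkeeping but is precisely where the ``product of rank one lattices'' in the conclusion originates. The paper uses (1) to see the projection of $\Lambda^2\cap S$ to $S/S_0$ is uniformly discrete, then splits $S$ as an almost direct product $S_0\cdot S_1$, treats $(\Lambda^2\cap S_1)(\Lambda^2\cap U)$ by the faithful case, and invokes \cite[Thm 7.4]{hrushovski2020beyond} for $\Lambda^2\cap S_0$. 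Finally, your secondary worry about a torus in (3) is moot: the hypothesis $\Rad(G)=U$ forces the radical to be unipotent, so there is no torus to handle.
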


In the statement of Proposition \ref{Proposition: Arithmeticity from action on subgroups}, the notation $\Aut(U)$ refers to all \emph{regular} automorphisms. Hence, $\Aut(U)$ has a natural $S$-adic algebraic structure.

\begin{proof}
Let $\mathbf{U}, U_1, U_2, \pi$ be given by Proposition \ref{Proposition: Meyer's theorem $S$-adic unipotent group}. Identify $U_1$ and $U$. Suppose moreover that $\Lambda$ is the model set associated with $(U_1, U_2, \mathbf{U}(\mathbb{Z}_S))$.  By Corollary \ref{Corollary: Bounded approximate subgroups of of automorphisms are relatively compact} upon considering a commensurable approximate subgroup $\Xi'$, we may also assume that $\Xi$ normalises $\langle \Lambda \rangle$. 

We will use the strong relation between unipotent groups and their Lie algebras in characteristic $0$ (see \cite[IV.2.4]{zbMATH03670601} for references and \cite[II]{raghunathan1972discrete} for the special cases of $\mathbb{R}$ and $\mathbb{Q}$). Let $\mathfrak{u}$ be the $\mathbb{Q}$-Lie algebra of $\mathbf{U}$. Let $\mathfrak{u}_S$ denote $\mathfrak{u} \otimes \mathbb{A}_S$. Write $\exp$ and $\log$ for the usual maps. Recall that since $\mathbf{U}$ is unipotent $\exp$ and $\log$ are $\mathbb{Q}$-regular isomorphisms. Conjugating automorphisms using $\exp$ we can identify the automorphism groups of $\mathbf{U}$ with a closed subgroup $\mathbf{L}$ of $\GL(\mathfrak{u}_S)$. In particular, $\mathbf{L}(\mathbb{Q}) = \Aut_{\mathbb{Q}}(\mathbf{U})$  and $\mathbf{L}(\mathbb{A}_S) = \Aut_{\mathbb{A}_S}(\mathbf{U})$. 

 By part (5) of Proposition \ref{Proposition: Meyer's theorem $S$-adic unipotent group} we have a group homomorphism $\Xi \rightarrow \Aut_{\mathbb{A}_S}(\mathbf{U})$ sending $\xi$ to $\xi_{\mathbf{U}}$. Choose $e_1, \ldots, e_n$ a $\mathbb{Q}$-basis of $\mathfrak{u}$ and use it to identify $\GL(\mathfrak{u}_S)$ and $\GL_n(\mathbb{A}_S)$. Since $\langle \Lambda \rangle$ is covered by finitely many cosets of $\mathbf{U}(\mathbb{Z}_S)$ and $\log$ is $\mathbb{Q}$-regular, there is an integer $m_1 > 0$ such that the coefficients of elements of $\log \langle \Lambda \rangle$ in the basis $(e_1, \ldots, e_n)$ are contained in $\frac{1}{m}\mathbb{Z}_S$.  So the $\mathbb{Z}_S$-span of $\log \langle \Lambda \rangle$ is finitely generated. Since $\log \langle \Lambda \rangle$ spans $\mathfrak{u}$ - as it is Zariski-dense - and $\mathbb{Z}_S$ is a principal ideal domain, we may assume that $e_1, \ldots, e_n$ are such that the $\mathbb{Z}_S$ span of $\log \langle \Lambda \rangle$ is $\bigoplus_{i=1}^n \mathbb{Z}_S e_i$.  Then $\bigoplus_{i=1}^n \mathbb{Z}_S e_i$ is stable under the action of $\Xi$. So $\xi_{\mathbf{U}} \in \GL_n(\mathbb{Z}_S)$ i.e. the image $\Xi_{\mathbf{U}}$ of $\Xi$ in $\mathbf{L}$ is contained in the $S$-arithmetic group $\mathbf{L}(\mathbb{Z}_S)$ (remark that this potential change of basis does not change the $\mathbb{Q}$-structure). 

By (5) of Proposition \ref{Proposition: Meyer's theorem $S$-adic unipotent group} again, the image of $\langle \Xi \rangle$ through that map is thus contained in $\Aut(U_1) \times \Aut(U_2) \subset \Aut_{\mathbb{A}_S}(\mathbf{U})$. Here we have identified $\Aut(U_1)$ with $\{\alpha: \mathbf{U} \rightarrow \mathbf{U}: \alpha(U_1) \subset U_1, \alpha_{|U_2}=\id_{U_2}\}$ and symmetrically for $\Aut(U_2)$. That way we see $\Aut(U_1)$ and $\Aut(U_2)$ as $S$-adic Zariski-closed subgroups of $\mathbf{L}(\mathbb{A}_S)$. 

By assumption for all $m \geq 0$,  $X=\bigcup_{\xi \in \Xi} \xi (\Lambda^m)$ is an approximate subgroup commensurable with $\Lambda$. But the projection of $\Lambda$ to $U_2$ generates a Zariski-dense subgroup and is contained in a compact subset of $U_2$. Therefore,  there is $m \geq 0$ such that the projection of $\Lambda^m$ is dense in a compact subset $K \subset U_2$ such that $\log(K)$ spans the Lie algebra of $U_2$. Moreover, $\xi_{\mathbf{U}}(K)$ is relatively compact for all $\xi \in \Xi$. Therefore, the projection of $\Xi_{\mathbf{U}}$ to $\Aut(U_2)$ is relatively compact. All in all, $\Xi_{\mathbf{U}}$ is contained in $\mathbf{L}(\mathbb{Z}_S) \cap \left(\Aut(U_1) \times \Aut(U_2)\right)$ and projects to a relatively compact subset of $\Aut(U_2)$. So $\Xi$ is contained in a generalized arithmetic approximate subgroup. Hence, (1) is satisfied.

If, moreover, $\Xi$ is an approximate lattice in $G$, then it must be commensurable with the generalized arithmetic approximate subgroup it is contained in by Corollary \ref{Corollary: Towers of star-approximate subsets are commensurable}. This proves (2).


It remains finally to prove (3). Suppose first that the action of $S$ on $U$ is faithful.  We have that $\Lambda^2 \cap U$ is an approximate lattice in $U$ by Theorem \ref{Theorem: Radical is hereditary}. Since $S$ acts faithfully on $U$, the conjugation action identifies $S$ with a closed subgroup of $\Aut(U)$. Moreover, we can apply (2) to $\Xi$ the image of $\Lambda^2 \cap S$ in $\Aut(U)$ acting on $\Lambda^2 \cap U$. With $U_1=U$ and $U_2$ as above we see that $G$ can be identified as a Zariski-closed subgroup of $\Aut(U) \ltimes U$ with the usual structure. Moreover, it is a generalized arithmetic approximate subgroup when we consider the product $G \times  \left(\Aut(U_2) \ltimes U_2\right)$ as a subset of  $\Aut_{\mathbb{A}_S}(\mathbf{U})\ltimes \mathbf{U} ( \mathbb{A}_S)$.

If now the action of $S$ on $U$ has a kernel $S_0$, (1) implies that the projection of $\Lambda^2 \cap S$ to $S/S_0$ is uniformly discrete. So there is $S_1 \subset S$ such that $S$ is the almost direct product of $S_0$ and $S_1$ and $\Lambda^2 \cap S_0$ is an approximate lattice in $S_0$, $\Lambda^2 \cap S_1$ is an approximate lattice in $S_1$ and $\Lambda$ is commensurable with $(\Lambda^2 \cap S_0)(\Lambda^2 \cap S_1)$. The previous paragraph implies that $(\Lambda^2 \cap S_1)(\Lambda^2 \cap U)$ is a generalized arithmetic approximate subgroup and $(\Lambda^2 \cap S_0)$  is commensurable with  the almost direct product of an \emph{arithmetic} approximate lattice and a product of rank one lattices \cite[Thm 7.4]{hrushovski2020beyond}. Since $(\Lambda^2 \cap S_0)(\Lambda^2 \cap S_1)(\Lambda^2 \cap U)$ is commensurable with $\Lambda$, (3) is proved.
\end{proof}

The first implication concerns the commensurator of certain approximate lattices.

\begin{corollary}\label{Corollary: Unipotent rational points in commensurator}
Let $\Lambda$ be an approximate lattice in an $S$-adic linear group $G$. Let $U \subset G$ be a normal Zariski-closed unipotent subgroup such that $\Lambda^2 \cap U$ is an approximate lattice in $U$. Let $\mathbf{U}$ be as in Proposition \ref{Proposition: Meyer's theorem $S$-adic unipotent group}. Then the image of $\mathbf{U}(\mathbb{Q})$ in $U$ belongs to the commensurator of $\Lambda$. 
\end{corollary}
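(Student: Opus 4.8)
The plan is to transfer the arithmetic structure produced by Proposition~\ref{Proposition: Meyer's theorem $S$-adic unipotent group} for the unipotent piece $U$ to a statement about the commensurator of $\Lambda$ in all of $G$. First I would record what Proposition~\ref{Proposition: Meyer's theorem $S$-adic unipotent group} gives us: a $\mathbb{Q}$-group $\mathbf{U}$, a decomposition $\mathbf{U}(\mathbb{A}_S) = U_1 \times U_2$, a surjection $\pi : \mathbf{U}(\mathbb{A}_S) \to U$ restricting to an isomorphism $U_1 \xrightarrow{\sim} U$ with kernel $U_2$, and the fact that (after passing to a commensurable approximate lattice $\Lambda'$) $\Lambda^2 \cap U$ is commensurable with the model set $p_{U_1}(\mathbf{U}(\mathbb{Z}_S) \cap U_1 \times W_0)$ for some window $W_0 \subset U_2$. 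Identifying $U$ with $U_1$, an element of the image of $\mathbf{U}(\mathbb{Q})$ in $U$ is $\pi(\gamma)$ for $\gamma \in \mathbf{U}(\mathbb{Q})$, and since $\mathbf{U}(\mathbb{Q})$ normalises and commensurates $\mathbf{U}(\mathbb{Z}_S)$ (inside $\mathbf{U}(\mathbb{A}_S)$, by the usual arithmetic-group argument: $\gamma \mathbf{U}(\mathbb{Z}_S) \gamma^{-1}$ and $\mathbf{U}(\mathbb{Z}_S)$ are both commensurable with $\mathbf{U}(\mathbb{Z}_S)$), one gets that conjugation by $\gamma$ commensurates the model set coming from $(U_1, U_2, \mathbf{U}(\mathbb{Z}_S))$; hence it commensurates $\Lambda^2 \cap U$ as a subset of $U$.

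Next I would promote "commensurates $\Lambda^2 \cap U$ in $U$" to "commensurates $\Lambda$ in $G$". The point is that $u := \pi(\gamma) \in U \subset G$ acts on $G$ by conjugation, and we want $u \Lambda u^{-1}$ commensurable with $\Lambda$. Here I would use the intersection/projection machinery: by Theorem~\ref{Theorem: Radical is hereditary} and Lemma~\ref{Lemma: Levi decomposition of GAAS}-type reasoning the relevant pieces of $\Lambda$ split along $U$ and $G/U$; since $u \in U$, conjugation by $u$ is trivial on $G/U$, so $p(u\Lambda u^{-1}) = p(\Lambda)$ where $p : G \to G/U$, and on the $U$-part we have just shown $u(\Lambda^2 \cap U)u^{-1}$ commensurable with $\Lambda^2 \cap U$. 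Combining these via Proposition~\ref{Proposition: Converse to Intersection and projections approximate lattices w/ closed subgroups} (or directly, writing $\Lambda^3 \subset (\Lambda^2 \cap U)\,\mathcal{F}$ for a bounded transversal $\mathcal{F}$ and pushing the conjugation through) yields that $u\Lambda u^{-1}$ and $\Lambda$ are commensurable, i.e. $u \in \Comm_G(\Lambda)$.

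The main obstacle I expect is the bookkeeping in the second step: one must be careful that $\Lambda$ is only an \emph{approximate} subgroup, so conjugation of "$\Lambda^2 \cap U$" by $u$ needs to be compared with $\Lambda^2 \cap U$ up to bounded sets, and one must check that finitely many translates suffice uniformly — i.e. that the finite set $F$ with $u(\Lambda^2\cap U)u^{-1} \subset F(\Lambda^2 \cap U)$ can be lifted to a finite set witnessing $u\Lambda u^{-1} \subset F' \Lambda$. This is exactly the kind of argument appearing in the proof of Proposition~\ref{Proposition: Converse to Intersection and projections approximate lattices w/ closed subgroups}, so I would invoke that proposition (and Proposition~\ref{Proposition: Intersection and projections approximate lattices w/ closed subgroups}) rather than redo it. A secondary subtlety is that we replaced $\Lambda$ by a commensurable $\Lambda'$ and normalised the window $W_0$; but commensurability is preserved under the replacement, and $\Comm_G(\Lambda) = \Comm_G(\Lambda')$ since commensurable approximate subgroups have the same commensurator, so the conclusion for $\Lambda'$ transfers verbatim to $\Lambda$.
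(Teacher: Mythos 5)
Your first step (that the image of $\mathbf{U}(\mathbb{Q})$ commensurates $\Lambda^2\cap U$ \emph{inside} $U$, because $\mathbf{U}(\mathbb{Q})$ commensurates $\mathbf{U}(\mathbb{Z}_S)$ and acts componentwise on $U_1\times U_2$) is correct, but it is not where the difficulty lies, and your second step has a genuine gap. Knowing that $p(u\Lambda u^{-1})=p(\Lambda)$ in $G/U$ and that $u(\Lambda^2\cap U)u^{-1}$ is commensurable with $\Lambda^2\cap U$ does \emph{not} imply that $u\Lambda u^{-1}$ is commensurable with $\Lambda$. Proposition \ref{Proposition: Converse to Intersection and projections approximate lattices w/ closed subgroups} only yields that $u\Lambda u^{-1}$ is again an approximate lattice; two approximate lattices with the same projection to $G/U$ and commensurable intersections with $U$ need not be commensurable, because the fibre over a point of $p(\Lambda)$ may be shifted by an amount that is unbounded relative to $\Lambda^2\cap U$ as the point varies. (Corollary \ref{Corollary: Towers of star-approximate subsets are commensurable} would require an inclusion, which you do not have, and the claimed "splitting of $\Lambda$ along $U$ and $G/U$" via Lemma \ref{Lemma: Levi decomposition of GAAS} is unavailable: $\Lambda$ is not assumed to be a GAAS.) Concretely, writing $[\lambda,u]=\lambda^{-1}u^{-1}\lambda u\in U$ (legitimate since $U$ is normal), one has $u^{-1}\lambda u=\lambda\,[\lambda,u]$, so the whole content of the corollary is that the commutator set $\{[\lambda,u]:\lambda\in\Lambda\}$ is covered by finitely many translates of $\Lambda$ --- and here $\lambda$ ranges over \emph{all} of $\Lambda$, not over $\Lambda^2\cap U$, so this is not addressed by your first step.

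That missing ingredient is exactly what the paper's proof supplies via Proposition \ref{Proposition: Arithmeticity from action on subgroups}: after replacing $\Lambda$ by a commensurable $\Lambda'$ normalising $\mathbf{U}(\mathbb{Z}_S)$ (Corollary \ref{Corollary: Bounded approximate subgroups of of automorphisms are relatively compact}), the conjugation homomorphism $\phi:\Lambda'\to\Aut_{\mathbb{A}_S}(\mathbf{U})$ preserves the decomposition $U_1\times U_2$, has relatively compact projection to $\Aut(U_2)$, and preserves the $\mathbb{Z}_S$-span of $\log\mathbf{U}(\mathbb{Z}_S)$ up to a bounded denominator. Consequently $[\phi(\Lambda'),u]$ has relatively compact projection to $U_2$ while $\log[\phi(\Lambda'),u]\subset\frac{1}{m}\,\mathrm{span}_{\mathbb{Z}_S}\log\mathbf{U}(\mathbb{Z}_S)$, which places $[\Lambda',u']$ in finitely many translates of the model set, i.e.\ of $\Lambda^2\cap U$, and then $\Lambda^{u'}\subset\Lambda'[\Lambda',u']$ finishes the argument. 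You would need to add this control on the conjugation action of $\Lambda$ on $U$ for your proof to go through.
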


\begin{proof}
Write $U_1 \times U_2 = \mathbf{U}(\mathbb{A}_{S})$ as in Proposition \ref{Proposition: Meyer's theorem $S$-adic unipotent group}. Identify $U_1$ and $U$. By Proposition \ref{Proposition: Arithmeticity from action on subgroups} we see that the group homomorphism $\phi: \Lambda \rightarrow \Aut_{\mathbb{A}_{S}}(\mathbf{U})$ takes values in $\Aut(U_1) \times \Aut(U_2)$. Moreover, there is an approximate subgroup $\Lambda'$ commensurable with $\Lambda$, $\Lambda'$ that normalizes $\mathbf{U}(\mathbb{Z}_S)$ (Corollary \ref{Corollary: Bounded approximate subgroups of of automorphisms are relatively compact}) and the projection of $\phi(\Lambda')$ to $\Aut(U_2)$ is relatively compact.  Choose $u$ any element in $\mathbf{U}(\mathbb{Q})$. The set of commutators $[\phi(\Lambda'), u]:=\{[\phi(\lambda),u] : \lambda \in \Lambda'\}$ projects to a compact subset in $U_2$.  Moreover, $$\log [\phi(\Lambda'), u] \subset \frac{1}{m} span_{\mathbb{Z}_S} \log \mathbf{U}(\mathbb{Z}_S)$$ for some integer $m > 0$. So $[\Lambda', u']$ is covered by finitely many right translates of $\Lambda^2 \cap U$ where $u'$ is the projection of $u$ to $U_1$. But $\Lambda^u \subset \Lambda'[\Lambda',u]$.  So $\Lambda^u$ is covered by finitely many translates of $u$. 
\end{proof}

\subsection{The nilpotent radical is hereditary}
We turn now to the first new heredity result for general approximate lattices. It concerns the nilpotent radical of a group. 

\begin{proposition}\label{Proposition: Unipotent radical is hereditary}
Let $\Lambda$ be a uniform approximate lattice in a solvable $S$-adic linear group $R$. Suppose that $\Lambda$ generates a Zariski-dense subgroup. Let $N$ be the maximal Zariski-closed Zariski-connected nilpotent group: 
\begin{enumerate}
\item $N= T \times U$ where $T$ is the maximal central torus and $U$ is the unipotent radical of $R$; 
\item $\Lambda^2 \cap N$ is a uniform approximate lattice in $N$.
\end{enumerate}
\end{proposition}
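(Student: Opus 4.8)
Part (1) is pure algebraic group theory and I would dispose of it first. Since $U$ is Zariski-connected, unipotent (hence nilpotent) and normal in $R$, it is contained in $N$; moreover, in a Zariski-connected solvable group the unipotent elements are exactly those of $U$, so $N\cap U=U$ and $U$ is the unipotent radical of $N$. As $T$ is central in $R$, the product $T\times U$ is a Zariski-connected nilpotent normal subgroup, whence $T\times U\subseteq N$. Conversely, write $N=Z\times U$ with $Z$ the (unique, hence characteristic) maximal torus of $N$; being characteristic in the normal subgroup $N$, $Z$ is normal in the Zariski-connected group $R$, hence central, so $Z\subseteq T$. Combined with $T\subseteq N$ this forces $Z=T$, i.e. $N=T\times U$. (The implication ``a torus acting trivially on $U^{\mathrm{ab}}$ acts trivially on $U$'' is used here and again below; it follows from $\mathfrak{u}_k/\mathfrak{u}_{k+1}$ being a quotient of $(\mathfrak{u}^{\mathrm{ab}})^{\otimes k}$.)

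For part (2), after a routine reduction I assume $R$ Zariski-connected and $\langle\Lambda\rangle$ Zariski-dense — the image of $\Lambda$ in the finite group $R/R^{\circ}$ being automatically uniformly discrete, Proposition~\ref{Proposition: Intersection and projections approximate lattices w/ closed subgroups} lets us pass to $R^{\circ}$. Writing $R=T_{\mathrm{max}}\ltimes U$, the quotients $R/U$ and $R/N$ are tori, and since $N$ is normal Proposition~\ref{Proposition: Intersection and projections approximate lattices w/ closed subgroups} reduces the claim to showing that $\Lambda$ has uniformly discrete image in $R/N$. I would prove this by induction on $\dim R$, the case $U=\{e\}$ (where $R=T_{\mathrm{max}}$ and $N=R$) being trivial.

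The decisive non-inductive input is that \emph{$\Lambda^2\cap U$ is a uniform approximate lattice in $U$} — equivalently that $\Lambda$ maps to a uniformly discrete subset of the torus $R/U$. Granting this, apply Proposition~\ref{Proposition: Meyer's theorem $S$-adic unipotent group} to $\Lambda^2\cap U$ inside $U$ to obtain a $\mathbb{Q}$-form $\mathbf{U}$ of $U$. The conjugation action of $\langle\Lambda\rangle$ on $U$ commensurates $\Lambda^2\cap U$, so Proposition~\ref{Proposition: Arithmeticity from action on subgroups}(1) shows that the image of $\Lambda$ in $\Aut(U)$, hence in the Zariski-closed subgroup $Q:=R/C_R(U)\subseteq\Aut(U)$, lies in a generalized arithmetic approximate subgroup; being also relatively dense, this image $\Lambda_Q$ is a uniform approximate lattice of $Q$. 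Since $C_R(U)$ contains the nontrivial centre $Z(U)$ we have $\dim Q<\dim R$, so the induction hypothesis applies to $(Q,\Lambda_Q)$: $\Lambda_Q^2\cap N(Q)$ is a uniform approximate lattice in $N(Q)$. Combining this with Lemma~\ref{Lemma: Intersection with centre} applied in $Q$ (which handles the central torus $T(Q)$) and Proposition~\ref{Proposition: Intersection and projections approximate lattices w/ closed subgroups} once more, one gets that $\Lambda_Q$ has uniformly discrete image in $Q/U(Q)$. Because $C_R(U)=T\times Z(U)\subseteq N$ one identifies $Q/U(Q)$ with $R/N$, so $\Lambda$ has uniformly discrete image in $R/N$; this closes the induction, and a final application of Proposition~\ref{Proposition: Converse to Intersection and projections approximate lattices w/ closed subgroups} — gluing $\Lambda^2\cap U$ with $\Lambda^2\cap T$, the latter supplied by $T\subseteq Z_R$ and Lemma~\ref{Lemma: Intersection with centre} — upgrades this to the statement that $\Lambda^2\cap N$ is a uniform approximate lattice in $N$.

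The main obstacle is precisely the italicised input above. This is a Bieberbach--Mostow-type phenomenon, and it does not follow from Theorem~\ref{Theorem: Radical is hereditary} since $\Rad(R)=R$. I would obtain it by passing to a good model $f\colon\langle\Lambda\rangle\to H$ with $H$ a real unipotent group — available because $R$ is amenable — so that the graph $\Gamma_f\subseteq R\times H$ is a uniform lattice in the solvable $S$-adic linear group $R\times H$, whose unipotent radical is $U\times H$; invoking the Mostow phenomenon for lattices in solvable groups (classical over $\mathbb{R}$, cf.\ \cite{MR3358545}, and requiring an $S$-adic adaptation) and translating the resulting model set back through $f$ gives that $\Lambda^2\cap U$ is a uniform approximate lattice in $U$. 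Failing a ready reference in this generality, the same conclusion should follow from a direct commutator argument in the spirit of \cite{Bieberbach}: if $\lambda\in\Lambda^2$ has image close to the identity in $R/U$, then for $v$ in a suitable bounded family of elements of $\langle\Lambda\rangle\cap U$ the commutator $[\lambda,v]$ lies in a fixed power of $\Lambda$ (hence in a uniformly discrete set) and is small modulo $[U,U]$, so it must vanish modulo $[U,U]$, forcing the torus part of $\lambda$ to act trivially on $U^{\mathrm{ab}}$ and therefore, by faithfulness of nontrivial torus actions on unipotent groups, to be trivial; one then descends along the lower central series of $U$. I expect the bookkeeping — in particular producing enough \emph{bounded} elements of $\langle\Lambda\rangle\cap U$ and isolating the central torus cleanly throughout the induction — to be the principal technical burden.
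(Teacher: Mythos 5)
You have correctly isolated the heart of the matter: once one knows that $\Lambda^2\cap U$ is a uniform approximate lattice in the unipotent radical $U$, the rest is bookkeeping with Lemma~\ref{Lemma: Intersection with centre} and the elementary intersection lemmas (Lemma~\ref{Lemma: Intersection of approximate subgroups}). However, two points deserve attention.

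First, your induction on $\dim R$ is redundant and potentially misleading. As you say, you intend to establish the decisive input ($\Lambda^2\cap U$ a uniform approximate lattice) independently, for an arbitrary solvable $S$-adic $R$. But once that is granted, the conclusion is immediate without any induction: Lemma~\ref{Lemma: Intersection with centre} gives that $\Lambda^2\cap Z_R$ is a uniform approximate lattice, hence $(\Lambda^2\cap Z_R)(\Lambda^2\cap U)$ is relatively dense in $Z_R\,U=N$ and lies in $\Lambda^4\cap N$, and Lemma~\ref{Lemma: Intersection of approximate subgroups}(1) converts this into the relative density of $\Lambda^2\cap N$. The passage through $Q=R/C_R(U)$, the identification $Q/U(Q)=R/N$, and the appeal to the induction hypothesis do nothing that the direct argument does not, and they invite circularity since the decisive input at lower dimensions is not independently supplied by the induction. (Minor point: $\Lambda^2\cap T$ need not be an approximate lattice directly; use $\Lambda^2\cap Z_R$ instead.)

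Second, and more seriously, the decisive input itself is where your proposal has a genuine gap, and where the paper's proof diverges entirely from yours. Your primary route — pass to a good model $f\colon\langle\Lambda\rangle\to H$ and invoke Mostow's theorem for the lattice $\Gamma_f\subset R\times H$ — faces two obstacles you half-acknowledge: (a) $H$ is merely a connected Lie group without compact normal subgroup and need not be algebraic (so $R\times H$ is not obviously an $S$-adic linear group), and (b) an $S$-adic Mostow theorem for solvable groups is not available off the shelf. Your fallback commutator sketch is closer in spirit to what is needed, but it has real bookkeeping issues — in particular producing a sufficiently rich bounded family in $\langle\Lambda\rangle\cap U$, which is precisely the kind of thing one cannot assume before the theorem is proved. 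The paper instead never attacks $\Lambda^2\cap U$ head-on: after reducing to $T=\{e\}$, it takes the \emph{minimal} Zariski-closed $R'$ with $U\subset R'\subset R$ such that $\Lambda^2\cap R'$ is a uniform approximate lattice, assumes $R'=R$ by minimality, and then shows $R=U$. The engine is the observation that $U':=C_R([R,R])$ is a normal \emph{unipotent} subgroup (because any semi-simple element of it would act trivially on $U$), and that Lemma~\ref{Lemma: Intersection with centraliser}, applied to the inner automorphisms by elements of $[\langle\Lambda\rangle,\langle\Lambda\rangle]$, yields that $\Lambda^2\cap U'$ is a uniform approximate lattice without any appeal to Mostow. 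Proposition~\ref{Proposition: Arithmeticity from action on subgroups} (which rests on Proposition~\ref{Proposition: Meyer's theorem $S$-adic unipotent group}) then propagates arithmeticity along the $R$-action on $U'$, forcing $R/C_R(U')$ to be unipotent and ultimately $R=U$. This minimality-plus-arithmeticity trick bypasses precisely the step that your proposal leaves open, and is the genuinely new idea here; you should not expect a Bieberbach/Mostow argument to close the gap cleanly in this $S$-adic setting.
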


\begin{proof}
Since $N$ is normal and $T$ is characteristic, $T$ is a normal torus in $G$. Hence, $T$ is central. Now, Part (1) is \cite[]{MR1102012}.

To prove (2) let us first suppose that $T=\{e\}$. We will use the following fact multiple times: if $s$ is a semi-simple element of $R$ fixing $U$ point-wise, then $s$ is central i.e. $s=e$. Indeed, if $s$ is semi-simple, then $s$ is contained in a maximal torus $S$. In particular, $s$ commutes with all elements of $S$ and all elements of $U$. But $R=SU$,  so $s$ is indeed central. 

Let $R' \subset R$ be the minimal Zariski-closed subgroup of $R$ containing $U$ such that $\Lambda^2 \cap R'$ is a uniform approximate lattice. If $s$ is a semi-simple element in $Z_{R'}$, then $s$ fixes $U$ point-wise. So $s=e$. In other words, just like $R$, $R'$ contains no central semi-simple element.  We can thus assume from now on that $R'=R$. 

Since $\Gamma=\langle \Lambda \rangle$ is Zariski-dense, we have $C_R([R,R])=C_R([\Gamma, \Gamma])$. By Lemma \ref{Lemma: Intersection with centraliser}, $\Lambda^2 \cap C_R([R,R])$ is therefore a uniform approximate lattice in $C_R([R,R])$. But $C_R([R,R])$ is normal - and we will show that under our assumption it is unipotent. We know that $[R,R] \subset U$. Take $s \in C_R([R,R])$ semi-simple, then conjugation by $s$ acts trivially on $R/[R,R]$. So it acts trivially on $U/[R,R]$. Moreover, conjugation by $s$ is trivial on $[R,R]$ by assumption. Since $s$ is semi-simple, we deduce that $s$ acts trivially on $U$. So $s=e$. 

Write $U' = C_R([R,R])$. Then $U'$ is a normal subgroup of $U$ and $\Lambda^2 \cap U'$ is a uniform approximate lattice in $U'$.  Considering the action of $R$ on $U'$ by conjugation and the approximate subgroups $\Lambda$ and $\Lambda^2 \cap U'$, we see by Proposition \ref{Proposition: Arithmeticity from action on subgroups} that the projection $\Lambda'$ of $\Lambda$ to $R/C_R(U')$ is a generalized arithmetic approximate subgroup.  It is also an approximate lattice by Proposition \ref{Proposition: Intersection and projections approximate lattices w/ closed subgroups}.   Let $U''$ be the unipotent radical of $R/C_R(U')$. The image of $U$ is contained in $U''$ and $U'' \cap \Lambda'^2$ is a uniform approximate lattice (because $\Lambda'$ is a generalized arithmetic approximate lattice). Let $R'$ denote the pull-back of $U''$. Then $U \subset R'$ and $R' \cap \Lambda^2$ is a uniform approximate lattice (Proposition \ref{Proposition: Converse to Intersection and projections approximate lattices w/ closed subgroups}). So $R' = R$ by minimality. In particular, any semi-simple element of $R$ belongs to $C_R(U')$. As a consequence all semi-simple elements are trivial by the previous paragraph i.e. $R=U$. 

Let us now consider the case $T\neq \{e\}$. We can proceed by induction. Since $T \neq \{e\}$, $Z_R$ is non-trivial. The projection $N'$ of $N$ to $R/Z_R$ is the maximal nilpotent group of $R/Z_R$. Moreover, by Lemma \ref{Lemma: Intersection with centre}, the projection $\Lambda'$ of $\Lambda$ to $R/Z_R$ is an approximate lattice. By induction, $\Lambda'^2 \cap N'$ is an approximate lattice in $N'$. So $\Lambda^2 \cap N$ is an approximate lattice in $N$ (Proposition \ref{Proposition: Converse to Intersection and projections approximate lattices w/ closed subgroups}). This proves (2).
\end{proof}

Proposition \ref{Proposition: Unipotent radical is hereditary} enables the use of partial arithmeticity theorems. 

\begin{proposition}\label{Proposition: Arithmeticity from action on nilpotent subgroups}
Let $\Lambda$ be an approximate lattice in a unipotent $S$-adic group $G$. Suppose that $\Lambda$ is Zariski-dense and let $N$ be a normal Zariski-closed Zariski-connected nilpotent $S$-adic linear subgroup containing the centre of $G$.  Suppose that $\Lambda^2 \cap N$ is an approximate lattice in $N$.  Then the projection of $\Lambda$ to $G/C_G(N)$ is GAAS. 
\end{proposition}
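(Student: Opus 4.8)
The plan is to realise $G/C_G(N)$ as a Zariski-closed subgroup of $\Aut(N)$, observe that $\Lambda$ conjugates the approximate lattice $\Lambda^2\cap N$ into a commensurable set, and then feed this data into Proposition \ref{Proposition: Arithmeticity from action on subgroups}. The crucial intermediate point will be that the projection of $\Lambda$ to $G/C_G(N)$ is again an approximate lattice.

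First some preliminaries. Since $G$ is unipotent, $\Lambda$ is automatically a \emph{uniform} approximate lattice: by Proposition \ref{Proposition: Meyer's theorem $S$-adic unipotent group} it is commensurable with a model set attached to $(U_1,U_2,\mathbf{U}(\mathbb{Z}_S))$, and $\mathbf{U}(\mathbb{Z}_S)$ is a uniform lattice in $\mathbf{U}(\mathbb{A}_S)$, so that model set --- hence $\Lambda$ --- is relatively dense. Being a Zariski-closed subgroup of the unipotent group $G$, $N$ is itself unipotent, so $\Aut(N)$ carries a natural $S$-adic algebraic structure and the conjugation morphism $c\colon G\to\Aut(N)$ is regular with kernel $C_G(N)$; its image $L:=c(G)\cong G/C_G(N)$ is a Zariski-closed (unipotent) subgroup. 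Set $\Xi:=c(\Lambda)$, which under $L\cong G/C_G(N)$ is exactly the projection $p(\Lambda)$. Since $\Lambda=\Lambda^{-1}$ and $N$ is normal in $G$, for each $\lambda\in\Lambda$ we have $c(\lambda)(\Lambda^2\cap N)=\lambda(\Lambda^2\cap N)\lambda^{-1}\subset\Lambda^4\cap N$; as $\Lambda^2\cap N$ is an approximate lattice and $\Lambda^4\cap N$ is commensurable with it (Corollary \ref{Corollary: Towers of star-approximate subsets are commensurable}), the set $\bigcup_{\xi\in\Xi}\xi(\Lambda^2\cap N)$, being sandwiched between $\Lambda^2\cap N$ and $\Lambda^4\cap N$, is commensurable with $\Lambda^2\cap N$. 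Finally, $\langle\Xi\rangle=c(\langle\Lambda\rangle)$ is Zariski-dense in $L$ because $\langle\Lambda\rangle$ is Zariski-dense in $G$ and $c$ is a regular homomorphism.

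The heart of the argument is to show that $\Xi=p(\Lambda)$ is an approximate lattice in $L$; by Proposition \ref{Proposition: Intersection and projections approximate lattices w/ closed subgroups} this is equivalent to $\Lambda^2\cap C_G(N)$ being a uniform approximate lattice in the normal subgroup $C_G(N)$. To obtain this I would invoke Corollary \ref{Corollary: Unipotent rational points in commensurator} applied to the normal unipotent subgroup $N$ (for which $\Lambda^2\cap N$ is an approximate lattice by hypothesis): it shows that the image of $\mathbf{N}(\mathbb{Q})$ in $N$ --- a Zariski-dense subset of $N$ --- lies in $\Comm_G(\Lambda)$. By the descending chain condition there are finitely many points $x_1,\ldots,x_r$ in that image with $C_G(N)=C_G(x_1,\ldots,x_r)$. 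Conjugation by each $x_i$ is an automorphism of $G$ commensurating $\Lambda$, so Lemma \ref{Lemma: Intersection with centraliser} (using that $\Lambda$ is uniform) gives that $\Lambda^2\cap C_G(x_i)$ is a uniform approximate lattice in $C_G(x_i)$. Iterating Corollary \ref{Lemma: Stability approximate lattices under intersection ambient group} over $i=1,\ldots,r$ then yields that $\Lambda^2\cap C_G(N)=\Lambda^2\cap\bigcap_i C_G(x_i)$ is a uniform approximate lattice in $C_G(N)$, as needed.

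Now $\Xi=p(\Lambda)$ is an approximate lattice in $L\subset\Aut(N)$, $\langle\Xi\rangle$ is Zariski-dense in $L$, and $\bigcup_{\xi\in\Xi}\xi(\Lambda^2\cap N)$ is commensurable with the approximate lattice $\Lambda^2\cap N$ of the unipotent group $N$; hence part (2) of Proposition \ref{Proposition: Arithmeticity from action on subgroups} applies directly and shows that $\Xi=p(\Lambda)$ is a generalized arithmetic approximate subgroup of $L=G/C_G(N)$, which is the claim. I expect the main obstacle to be the middle step --- extracting uniform discreteness of the projection --- which rests squarely on Corollary \ref{Corollary: Unipotent rational points in commensurator}, i.e.\ on the fact that the $\mathbb{Q}$-points of the Meyer model of $\Lambda^2\cap N$ commensurate $\Lambda$; granting that, the rest is bookkeeping with results already assembled in \S\ref{Section: Intersection theorems for approximate lattices}.
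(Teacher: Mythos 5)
Your argument is correct for the statement as literally written, and it shares the paper's core idea (realise $G/C_G(N)$ inside $\Aut(N)$ and feed the conjugation action of $\Lambda$ on $\Lambda^2\cap N$ into Proposition \ref{Proposition: Arithmeticity from action on subgroups}), but the two proofs diverge in where they put the work. The paper's proof spends its effort decomposing $N=U\times T$ with $T$ a central torus and replacing $\Lambda^2\cap N$ by the set $\Lambda_U=p_U\left(\Lambda_N^2\cap(W\times U)\right)$, so that the group being acted upon is unipotent as required by Proposition \ref{Proposition: Arithmeticity from action on subgroups}; this step is vacuous under your (faithful) reading of the hypothesis that $G$ itself is unipotent, since then $N$ is unipotent and $T=\{e\}$ --- but be aware that the paper's own proof is clearly written for a non-unipotent ambient group (the ``unipotent'' in the statement is most plausibly a slip), and in that generality your opening claim that $N$ is unipotent fails and you would need exactly the $\Lambda_U$ reduction. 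Conversely, you supply a step the paper elides: to conclude that the projection \emph{is} a GAAS (commensurable with, not merely contained in, an arithmetic model set) one needs part (2) of Proposition \ref{Proposition: Arithmeticity from action on subgroups}, hence that $p(\Lambda)$ is an approximate lattice in $G/C_G(N)$; the paper's proof only checks the hypotheses of part (1). Your derivation of this --- Corollary \ref{Corollary: Unipotent rational points in commensurator} to put a Zariski-dense subset of $N$ in $\Comm_G(\Lambda)$, the descending chain condition to reduce $C_G(N)$ to $C_G(x_1,\dots,x_r)$, Lemma \ref{Lemma: Intersection with centraliser} (legitimate since approximate lattices in unipotent groups are uniform by Proposition \ref{Proposition: Meyer's theorem $S$-adic unipotent group}), Corollary \ref{Lemma: Stability approximate lattices under intersection ambient group}, and finally Proposition \ref{Proposition: Intersection and projections approximate lattices w/ closed subgroups} --- is non-circular and correct, and in fact mirrors how the paper handles the analogous issue in the proof of Proposition \ref{Proposition: Unipotent radical is hereditary}. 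So your proof buys a complete justification of the ``is a GAAS'' conclusion at the cost of covering only the unipotent case; the paper's buys the general $N=U\times T$ case at the cost of leaving the approximate-lattice property of the projection implicit.
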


\begin{proof}
As in the proof of (1) of Proposition \ref{Proposition: Unipotent radical is hereditary}, $N = U \times T$ where $T$ is the maximal torus contained in $N$ and $T$ is central in $G$. Write $\Lambda_N:=\Lambda^2 \cap N$ and choose a symmetric compact neighbourhood of the identity $W \subset T$. Define $\Lambda_U:=p_U\left(\Lambda_N^2 \cap \left( W \times U\right)\right)$. Since $T$ is central, for every $\lambda \in \Lambda$, 
$$\lambda \Lambda_U \lambda^{-1} = p_U\left(\lambda\Lambda_N^2\lambda^{-1} \cap \left( W \times U\right)\right).$$
But $\Lambda$ commensurates $\Lambda_N$ (Lemma \ref{Lemma: Intersection of approximate subgroups} (3)), so $\Lambda$ commensurates $\Lambda_U$ (Lemma \ref{Lemma: Intersection of approximate subgroups} (3) again). So the image $\Lambda'$ in $\Aut(N)$ via conjugation is as GAAS of the image $G'$ of $G$. But the kernel of the conjugation action of $G$ on $N$ is $C_G(N)$, so $G'$ is indeed $G/C_G(N)$. This proves Proposition \ref{Proposition: Arithmeticity from action on nilpotent subgroups}.
\end{proof}
%

\subsection{A hereditary unipotent subgroup}

It will also matter that some unipotent (not only nilpotent) subgroup is hereditary. Although the unipotent radical may fail to be hereditary, we still prove that a certain large characteristic unipotent subgroup is hereditary. 

\begin{proposition}\label{Lemma: Intersection with the derived subgroup}
Let $G$ be a Zariski-connected $S$-adic group and $\Lambda$ be an approximate lattice in $G$ generating a Zariski-dense subgroup. Let $U$ be the maximal normal unipotent subgroup of $G$ and define the Zariski-closed subgroup
$$[G,U]:=\overline{\langle[g,u] : g\in G, u\in U\rangle}.$$
Then $\Lambda^2 \cap [G,U]$ is an approximate lattice in $[G,U]$. 
\end{proposition}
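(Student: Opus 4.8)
The plan is to reduce the statement to the previously established intersection results by building a semidirect-product presentation of $G$ in which $[G,U]$ becomes visible as a quotient-detectable subgroup. First I would invoke the Levi decomposition $G = (S T) \ltimes U$, with $S$ a semi-simple Levi subgroup, $T$ the central torus it centralises, and $U$ the unipotent radical. The key observation is that $[G,U]$ is contained in $U$, is normal and characteristic, and that the quotient $G/[G,U]$ has the feature that $U/[G,U]$ is central in it: indeed, modulo $[G,U]$ the conjugation action of $G$ on the image of $U$ is trivial. So the strategy is: show $\Lambda^2 \cap U$ is an approximate lattice in $U$ (which is Theorem \ref{Theorem: Radical is hereditary} together with Lemma \ref{Lemma: Levi decomposition of GAAS}-type reasoning, or more directly follows from the radical being hereditary since $U \subset \Rad(G)$ and Proposition \ref{Proposition: Intersection and projections approximate lattices w/ closed subgroups}), then analyse the action of $\Lambda$ on this nilpotent normal subgroup via Proposition \ref{Proposition: Arithmeticity from action on subgroups} and Proposition \ref{Proposition: Meyer's theorem $S$-adic unipotent group}.

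The heart of the argument should go as follows. Apply Proposition \ref{Proposition: Meyer's theorem $S$-adic unipotent group} to the approximate lattice $\Lambda^2 \cap U$ in $U$, producing a $\mathbb{Q}$-group $\mathbf{U}$, the splitting $\mathbf{U}(\mathbb{A}_S) = U_1 \times U_2$ with $U_1 \cong U$, and the model set description. By Proposition \ref{Proposition: Arithmeticity from action on subgroups}(1), since $\Lambda$ (acting by conjugation, which it does because $U$ is normal) commensurates $\Lambda^2 \cap U$, the image $\Xi$ of $\Lambda$ in $\Aut(U) \subset \mathbf{L}(\mathbb{A}_S)$ is contained in a generalized arithmetic approximate subgroup; moreover by part (5) of Proposition \ref{Proposition: Meyer's theorem $S$-adic unipotent group} the action extends to $\mathbf{U}(\mathbb{A}_S)$ stabilising $U_1, U_2$, and the analysis in the proof of Proposition \ref{Proposition: Arithmeticity from action on subgroups} shows $\Xi_{\mathbf{U}}$ lies in $\mathbf{L}(\mathbb{Z}_S)$ and projects to a relatively compact subset of $\Aut(U_2)$. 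The point of this arithmetic control is that the commutator subgroup $[\phi(\Lambda), \mathbf{U}(\mathbb{Z}_S)]$ (and hence $[\Lambda, \Lambda^2 \cap U]$) is again confined to a lattice and a model set, so that $\Lambda^2 \cap [G,U]$ — which is the relevant projection to $U_1$ of these commutators together with $[\Lambda^2\cap U, \Lambda^2\cap U]$ — is a model set coming from the cut-and-project scheme $(\psi(U_1), \psi(U_2), \mathbf{U}(\mathbb{Z}_S))$ for the appropriate Zariski-closed image, hence an approximate lattice in $[G,U]$.

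A cleaner route to the final conclusion, once the arithmeticity is in hand, is via Proposition \ref{Proposition: Converse to Intersection and projections approximate lattices w/ closed subgroups} and Corollary \ref{Corollary: Towers of star-approximate subsets are commensurable}: one shows that the projection of $\Lambda$ to $G/[G,U]$ is uniformly discrete. This is where the structure of $G/[G,U]$ matters — in that quotient the (image of the) unipotent radical is central, so $G/[G,U]$ is essentially reductive-by-central-unipotent, and one can combine Lemma \ref{Lemma: Intersection with centre}, Proposition \ref{Proposition: Unipotent radical is hereditary}, and Theorem \ref{Theorem: Radical is hereditary} to see that the projection is discrete (intersect with the central unipotent part, which is handled by Proposition \ref{Proposition: Unipotent radical is hereditary} applied to a solvable piece, and pass to the reductive quotient). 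Then Proposition \ref{Proposition: Intersection and projections approximate lattices w/ closed subgroups} upgrades uniform discreteness of the projection to the assertion that $\Lambda^2 \cap [G,U]$ is an approximate lattice, and uniformity is preserved throughout.

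The main obstacle I anticipate is verifying that the projection of $\Lambda$ to $G/[G,U]$ is genuinely uniformly discrete — equivalently, controlling the "commutator spread" $\{[\lambda,\mu]: \lambda \in \Lambda, \mu \in \Lambda^2 \cap U\}$ and showing it does not accumulate at the identity in a way transverse to $[G,U]$. The arithmeticity from Proposition \ref{Proposition: Arithmeticity from action on subgroups} is exactly the tool that tames this: it forces the conjugation action to preserve a common $\mathbb{Z}_S$-lattice in $\mathfrak{u}$, so commutators land in a fixed finitely generated $\mathbb{Z}_S$-module and the relevant internal-space projection stays relatively compact. The delicate bookkeeping will be to keep track of which pieces of $U$ are "$U_1$-like" (genuinely in $U$) versus "$U_2$-like" (in the kernel $\ker\pi$) when one forms commutators, and to confirm that the resulting approximate subgroup of $[G,U]$ is relatively dense there and not merely uniformly discrete — this relative density should follow because $\Lambda$ is relatively dense in $G$ and the image of $[G,U]$-cosets under the model-set construction is relatively dense in the Zariski-closed subgroup $[G,U]$, together with Corollary \ref{Lemma: Stability approximate lattices under intersection ambient group} to intersect back.
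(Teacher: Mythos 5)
Your proposal departs from the paper's route in a substantial way: you propose to reach the conclusion by applying Meyer's theorem (Proposition~\ref{Proposition: Meyer's theorem $S$-adic unipotent group}) and the arithmeticity result (Proposition~\ref{Proposition: Arithmeticity from action on subgroups}) to $\Lambda^2 \cap U$ and then controlling the conjugation action and the ``commutator spread'' through the resulting $\mathbb{Z}_S$-structure, whereas the paper proceeds by a direct Bieberbach-style commutator argument. Concretely, the paper works with $\Lambda_N := \Lambda^2 \cap N$ where $N = T \times U$ is the \emph{nilpotent} radical of $\Rad(G)$ (with $T$ the maximal central torus); for a well-chosen $\xi \in \Lambda$ lying in $C_2 \setminus C_1$ of the ascending central series (extracted via Proposition~\ref{Proposition: Meyer's theorem $S$-adic unipotent group}), the map $\phi_\xi \colon u \mapsto [\xi,u]$ is a \emph{regular group homomorphism} because the commutators are central; since $\phi_\xi(\Lambda_N) \subset \Lambda^4$ and is relatively dense in its range, $\Lambda^2 \cap U_\xi$ is an approximate lattice, and one inducts on dimension. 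Once reduced to $U$ abelian, Zariski-density supplies $\xi_1,\dots,\xi_r$ with $\sum U_{\xi_i} = [G,U]$, and the union of the commutator images lands in a bounded power of $\Lambda$ and is relatively dense in $[G,U]$, closing the argument via Lemma~\ref{Lemma: Intersection of approximate subgroups}. Your route would need the arithmeticity machinery only to locate the good $\xi$'s; it does not by itself produce the relative-density statement.

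There is also a genuine gap in your first step. You assert that $\Lambda^2 \cap U$ is an approximate lattice in $U$, citing Theorem~\ref{Theorem: Radical is hereditary} and Proposition~\ref{Proposition: Intersection and projections approximate lattices w/ closed subgroups}. But Theorem~\ref{Theorem: Radical is hereditary} and Proposition~\ref{Proposition: Unipotent radical is hereditary} only give that $\Lambda^2 \cap N$ is an approximate lattice for $N = T \times U$; to get from $N$ down to $U$ via Proposition~\ref{Proposition: Intersection and projections approximate lattices w/ closed subgroups} you would need the projection of $\Lambda^2 \cap N$ to the torus $T$ to be uniformly discrete, and this is false in general (an irreducible approximate lattice in $T \times U$ can project densely to $T$). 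The paper sidesteps this precisely by not claiming $\Lambda^2 \cap U$ is an approximate lattice; when $T \neq \{e\}$ it replaces $\Lambda^2 \cap U$ by the \emph{projection} to $U$ of $\Lambda^4 \cap (W \times U)$ for a compact window $W \subset T$, which is an approximate lattice in $U$ but is not $\Lambda^2 \cap U$. Any repair of your approach would have to make the same distinction, and the ``cleaner route'' you suggest (establishing uniform discreteness of the projection to $G/[G,U]$ by combining the centre, radical, and nilpotent-radical intersection results) is not substantiated: those results concern intersections of $\Lambda$ with subgroups of $G$ and do not directly control the quotient by $[G,U]$, which sits strictly inside $U$. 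The main obstacle you flag at the end is real, but the proposal does not resolve it.
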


We remark here that $[G,U]=[G,R]$.
\begin{proof}

We will proceed by induction on the dimension of $U$. Let $T$ be the maximal central torus in $\Rad(G)$ and define $N=T \times U$. According to Theorem \ref{Theorem: Radical is hereditary} and Proposition \ref{Proposition: Unipotent radical is hereditary}, $\Lambda_N:=\Lambda^2 \cap N$ is an approximate lattice in $N$.

 We will first show that $\Lambda^2 \cap [U,U]$ is an approximate lattice. If $U$ is abelian there is nothing to prove. So suppose that $U$ is not abelian and, moreover, that $T=\{e\}$ i.e. $N=U$. Write $C_1=Z_U, C_2, \ldots, C_n=U$ the ascending central sequence of $U$. Recall that $[U,C_i] \subset C_{i-1}$. because $U$ is not abelian, $C_2 \setminus C_1$ is not empty. As a simple consequence of the Meyer-type Proposition \ref{Proposition: Meyer's theorem $S$-adic unipotent group}, there is in fact $\xi \in \Lambda$ such that $\xi \in C_2 \setminus C_1$. For every $u,v \in U$, since $[\xi, u]$ and $[\xi,v]$ are both central, we have $[\xi,uv]=[\xi,u][\xi,v]$. Therefore, $\phi_{\xi}: u \mapsto [\xi,u]$ is a regular homomorphism of $U$. It has non-trivial range since $\xi \notin C_1$. Let $U_{\xi}$ denote the range of $\phi_{\xi}$. Since $\xi \in \Lambda$, we have $\phi_{\xi}(\Lambda_N) \subset \Lambda^4$. Hence, $\phi_{\xi}(\Lambda_N)$ is a relatively dense subset contained in $\Lambda^4 \cap U_{\xi}$. So $\Lambda^2 \cap U_{\xi}$ is an approximate lattice in $U_{\xi}$ (Lemma \ref{Lemma: Intersection of approximate subgroups}).
 
 Suppose now that $T$ is not trivial any more. Define $\Lambda' \subset U$ as the projection to $U$ of $\Lambda^4 \cap \left(W \times U\right)$ where $W$ denotes a symmetric compact neighbourhood of the identity in $T$. Then $\Lambda'$ is an approximate lattice in $U$. Hence, the previous paragraph provides $\xi \in \Lambda'$ such that $\phi_{\xi}:u \mapsto [\xi,u]$ is a regular homomorphism and the image $[\xi,\Lambda']$ is an approximate lattice in the range $U_{\xi}$ of $\phi_{\xi}$. For every $\lambda \in \Lambda'$ choose $\tilde{\lambda} \in \Lambda^4 \cap N$ such that $\tilde{\lambda}$ projects to $\lambda'$ in $U$. Similarly, choose $\tilde{\xi} \in \Lambda^4 \cap N$ such that $\tilde{\xi}$ projects to $\xi$ in $U$. Then 
 $$[\xi,\lambda] = [\tilde{\xi}, \tilde{\lambda}].$$
 So $[\tilde{\xi},\Lambda^4 \cap N] $ contains $[\xi, \Lambda']$ which is an approximate lattice in $U_{\xi}$. Since $[\tilde{\xi},\Lambda^4 \cap N] \subset \Lambda^{16} \cap U_{\xi}$, we have that $\Lambda^2 \cap U_{\xi}$ is an approximate lattice in $U_{\xi}$ (Lemma \ref{Lemma: Intersection of approximate subgroups}). 
 
Therefore, we have shown that whenever $U$ is not abelian, we can find a non-trivial unipotent central subgroup $U' \subset [U,U]$ such that $\Lambda^2 \cap U'$ is an approximate lattice in $U'$. Using Proposition \ref{Proposition: Intersection and projections approximate lattices w/ closed subgroups}, we can proceed by induction and show that $\Lambda^2 \cap [U,U]$ is an approximate lattice in $[U,U]$.

Let us turn back to the proof of Proposition \ref{Lemma: Intersection with the derived subgroup}. Since $[U,U] \cap \Lambda^2$ is an approximate lattice, it is enough to prove the result in $G/[U,U]$ (Proposition \ref{Proposition: Intersection and projections approximate lattices w/ closed subgroups}). Thus, we assume from now on that $U$ is abelian. Since $U$ is abelian, we find that for all $\xi \in \langle\Lambda\rangle$, the map 

\begin{align*}
 \phi_{\xi}: & T \times U \rightarrow U \\
  & (t,u) \rightarrow [\xi, (t,u)]
 \end{align*}
is a regular homomorphism. Let $U_{\xi}$ denote its range. Then $\phi_{\xi}(\Lambda^2 \cap N)$ is relatively dense in $U_{\xi}$. Since $\langle \Lambda \rangle$ is Zariski-dense, one can find $\xi_1, \ldots, \xi_r \in \Lambda^m$ for some integer $m>0$ such that $\sum_{i=1}^r U_{\xi_i} = [G,U]$ (recall that $U$ is an $S$-adic abelian unipotent group). Therefore, $\sum_{i=1}^r \phi_{\xi_i}(\Lambda^2 \cap N) = \sum_{i=1}^r [\xi_i,\Lambda]$ is relatively dense in $[G,U]$. But for all $i \in \{1, \ldots, r\}$, we have $[\xi_i, \Lambda] \subset \Lambda^{2m+4}$. So $\sum_{i=1}^r [\xi_i,\Lambda] \subset \Lambda^{r(2m+4)}$. According to Lemma \ref{Lemma: Intersection of approximate subgroups} again, $[G,U] \cap \Lambda^2$ is an approximate lattice in $[G,U]$. 
\end{proof}

\begin{remark}\label{Remark: Alternative charac. derived subgroup}
The group $[G,U]$ can also be characterised as follows: it is the smallest normal unipotent subgroup $U'$ such that $G/U'$ is a \emph{direct} product of a reductive group and an abelian unipotent group. As a consequence of that fact if $G_1 \rightarrow G_2$ is an homomorphism and $U_1 \subset G_1$, $U_2 \subset G_2$ are the respective unipotent radicals, then the image of $[G_1,U_1]$ is contained in $[G_2,U_2]$. 
\end{remark}

\subsection{Intersection theorem for isotypic factors}
We mention a related result for groups with abelian radical.
\begin{proposition}\label{Proposition: Precise decomposition quasi-cocycle}
Let $G= L\ltimes A$ be an $S$-adic linear group with $L$ reductive and $A$ abelian. Let $\Lambda \subset G$ be a Zariski-dense approximate lattice such that $\Lambda_r:=\Lambda^2 \cap A$ is an approximate lattice and let $\Lambda_{red}$ denote the projection of $\Lambda$ to $L$. Write $A = T \times U$ with $T$ a torus and $U$ unipotent (Proposition \ref{Proposition: Unipotent radical is hereditary}). For any normal subgroup $N$ of $\langle \Lambda_{red} \rangle$ write $U^N$ the sum of all irreducible subrepresentations of $U$ whose kernel is $N$. Then there is a family $\mathcal{N}$ of normal subgroups of $\langle \Lambda_{red} \rangle$ such that $A = Z_G \oplus \bigoplus_{N \in \mathcal{N}} U^N$. Moreover, $U^N \cap \Lambda^2 $ is an approximate lattice in $U^N$ for all $N \in \mathcal{N}$, the map that sends $N$ to its Zariski-closure $\bar{N}$ in $L$ is one-to-one and $\Lambda_{red}^2 \cap \bar{N}$ is an approximate lattice for all $N \in \mathcal{N}$. 
\end{proposition}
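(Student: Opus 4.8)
The strategy is to run a decomposition argument on the $L$-representation $U$ (together with the torus part $T$) much as in the proof of Proposition \ref{Lemma: Intersection with the derived subgroup}, but this time keeping track of the isotypic structure. First I would dispose of the torus: by Proposition \ref{Proposition: Unipotent radical is hereditary} we can write $A = T \times U$ with $T$ central in $G$ (being the maximal torus of the nilpotent radical, hence characteristic and normal, hence central in the reductive-by-abelian group $G$), so $T \subseteq Z_G$. Likewise the $L$-fixed points $U^L$ of $U$ lie in $Z_G$; so modulo $Z_G$ we may assume $U$ has no trivial subrepresentation. Semisimplicity of the $L$-action (reductivity of $L$ over each $\mathbb{Q}_v$, in characteristic $0$) then gives $U = \bigoplus_{N \in \mathcal{N}} U^N$ where $N$ runs over the kernels of the irreducible constituents and $U^N$ is the sum of those with kernel exactly $N$; the $U^N$ are $\langle \Lambda_{red}\rangle$-invariant and $\Lambda_{red}$-invariant by construction. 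This establishes the claimed decomposition of $A$.

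The core point is then that $\Lambda^2 \cap U^N$ is an approximate lattice in $U^N$ for each $N$. For this I would use the commutator trick already exploited above: since $\Lambda^2 \cap A = \Lambda_r$ is an approximate lattice in $A$, and $U^N$ is a quotient of $U$ by the $L$-invariant (hence $\langle\Lambda_{red}\rangle$-invariant) complement $\bigoplus_{N' \neq N} U^{N'} \oplus T$, it suffices by Proposition \ref{Proposition: Intersection and projections approximate lattices w/ closed subgroups} to show that the projection $\Lambda_r^N$ of $\Lambda_r$ to $U^N$ is uniformly discrete, equivalently that it is an approximate lattice. To produce relative density one uses that $\langle\Lambda\rangle$ is Zariski-dense: choosing finitely many $\xi_1, \dots, \xi_k \in \Lambda^m$ whose conjugation actions generate the right subspace, the maps $\phi_{\xi_i}: v \mapsto [\xi_i, v]$ (well-defined regular homomorphisms $A \to U$ since $A$ is abelian) send $\Lambda_r$ into bounded powers of $\Lambda$, and $\sum_i \phi_{\xi_i}(\Lambda_r)$ is relatively dense in $[G,U] \cap U^N$-type pieces. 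Restricting attention to the constituent $U^N$: because $U^N$ is isotypic with nontrivial $\langle\Lambda_{red}\rangle$-action, the span of the $\phi_{\xi_i}$-images hits all of $U^N$, so $\Lambda^{C} \cap U^N$ is relatively dense for some bounded $C$, and then Lemma \ref{Lemma: Intersection of approximate subgroups} upgrades this to: $\Lambda^2 \cap U^N$ is an approximate lattice. Applying Corollary \ref{Lemma: Stability approximate lattices under intersection ambient group} inductively (or Proposition \ref{Proposition: Converse to Intersection and projections approximate lattices w/ closed subgroups}) reconciles these with the fact that the $U^N$ together with $Z_G$ exhaust $A$.

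It remains to handle the reductive side: injectivity of $N \mapsto \bar N$ and the statement that $\Lambda_{red}^2 \cap \bar N$ is an approximate lattice. Injectivity is essentially formal — two normal subgroups of $\langle\Lambda_{red}\rangle$ with the same Zariski-closure cannot both occur as kernels of inequivalent isotypic blocks, since the block $U^N$ determines $N$ as the pointwise stabiliser in $\langle \Lambda_{red}\rangle$, and that stabiliser depends only on the Zariski-closed subgroup $\bar N$ acting (the action factors through $L$). For the approximate-lattice statement on $\bar N$: here $N$ acts trivially on $U^N$, so $\bar N \subseteq C_L(U^N)$, and since $\Lambda^2 \cap U^N$ is an approximate lattice in $U^N$ on which $\Lambda_{red}$ acts — via an action with kernel (intersected with $\langle\Lambda_{red}\rangle$) Zariski-dense in $\bar N$ — we invoke Lemma \ref{Lemma: Intersection with centraliser of kernel} (with $L$ there the image $L/C_L(U^N)$, and the role of $\ker p$ played by a subgroup with Zariski-closure $\bar N$) to conclude $C_L(U^N) \cap \Rad(\cdot) \cap \Lambda_{red}^2$ — and in particular, after a short argument identifying $\bar N$ with the relevant hereditary piece, $\Lambda_{red}^2 \cap \bar N$ — is a uniform approximate lattice. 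The main obstacle I anticipate is precisely this last identification: one must be careful that "kernel of the isotypic block" on the group level matches a \emph{connected} Zariski-closed subgroup to which Lemma \ref{Lemma: Intersection with centraliser of kernel} applies, and that the Zariski-density hypothesis ($N \cap \langle\Lambda_{red}\rangle$ Zariski-dense in $\bar N$) genuinely holds — this should follow because $N$ is by definition a subgroup of $\langle\Lambda_{red}\rangle$ and $\bar N$ is its closure, but one should confirm connectivity can be arranged by passing to finite-index subgroups without damaging the rest of the argument.
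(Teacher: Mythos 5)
Your high-level plan — quotienting out $Z_G$, isotypic decomposition of $U$, commutator maps for relative density, and kernel arguments for the reductive side — matches the shape of the paper's proof, and the injectivity argument for $N\mapsto\bar N$ is essentially the same. But two concrete steps are missing or misapplied.

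First, the isolation of a single isotypic block $U^N$. You correctly flag that projecting to $U^N$ and proving uniform discreteness is circular, and you switch to proving relative density of $\Lambda^C\cap U^N$ via commutator maps $\phi_\xi: v\mapsto[\xi,v]$. But for $\xi\in\Lambda^m$ and $v\in A$, the image $\phi_\xi(v)=(\xi-\id)\cdot v$ is spread across all isotypic blocks, not contained in $U^N$. So $\sum_i\phi_{\xi_i}(\Lambda_r)$ lands in all of $U'$ (the non-central part), and your plan does not explain how to land inside a single $U^N$. The paper resolves this by inductive peeling: it first proves $\Lambda_r^2\cap U'$ is an approximate lattice (where $U'$ is the full complement of $Z_G\cap U$), then for a maximal $N$ it uses a centraliser-type result to handle $C_G(N)\cap U'$, decomposes $U'=(C_G(N)\cap U')\oplus U'''$, and runs the $(\gamma-\id)$ argument with $\gamma$ ranging in $N$ (which kills the $C_G(N)\cap U'$ factor and stays inside $U'''$). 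That last restriction of $\gamma$ to $N$ is precisely the device you are missing.

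Second, the claim that $\Lambda_{red}^2\cap\bar N$ is an approximate lattice. Lemma~\ref{Lemma: Intersection with centraliser of kernel} produces an approximate lattice in $C_G(\ker p)\cap\Rad(G)$, i.e.\ inside the radical. But $\bar N$ lives in the reductive Levi factor $L$, not in $\Rad(G)$, so the lemma's conclusion is simply about the wrong subgroup and no amount of ``short identification'' can bridge that. The worry you raise (connectivity) is not the real obstruction; the obstruction is that the lemma does not speak about $\bar N$ at all. The paper uses a direct argument instead: since $\Lambda_{red}$ sends $\Lambda^2\cap U^N$ into $\Lambda^4\cap U^N$, which is uniformly discrete, the projection of $\Lambda_{red}$ to $L/N'$ (where $N'$ is the kernel of the $L$-action on $U^N$) is uniformly discrete, hence $\Lambda_{red}^2\cap N'$ is an approximate lattice by Proposition~\ref{Proposition: Intersection and projections approximate lattices w/ closed subgroups}. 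Borel density then shows its Zariski closure contains $(N')^{\circ}$, while on the other hand $\Lambda_{red}^2\cap N'\subset N\subset\bar N$; combining, $\bar N$ has finite index in $N'$ and the statement follows.
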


\begin{proof}
We know by Lemma \ref{Lemma: Intersection with centre} that $Z_G \cap \Lambda^2$ is an approximate lattice. Since $L$ is reductive, $Z_G = T \times \left(Z_G \cap U\right)$ and $Z_G \cap U$ is the isotypic factor corresponding to the trivial representation, we know that there is a unique  $U' \subset U$ Zariski-closed and normalised by $L$ such that $\left( Z_G \cap U \right)\oplus U' = U$. For every $\gamma \in \langle\Lambda_{red}\rangle$, we have that $(\gamma - id)\cdot (Z_G) = \{0\}$ and $(\gamma - id)\cdot (U') \subset U'$ where $\cdot$ denotes the action of $L$ on $A$. If we write $U''$ the subspace generated by the spaces $(\gamma - id)\cdot (U')$ when $\gamma$ ranges in $\langle \Lambda_{red} \rangle$, then $U''$ is a subrepresentation.  We show that$U'' = U'$. If now $I \subset U'$ is any irreducible representation, then there is $\gamma \in \langle\Lambda_{red}\rangle$ that acts non-trivially on $I$ since $\langle\Lambda_{red}\rangle$ is Zariski-dense in $L$ (\cite[A.11]{hrushovski2020beyond}). So $I \cap U'' \supset (\gamma - id)\cdot  I \neq \{0\}$. So $I \subset U''$, which yields $U''=U'$. Since $U'$ is finite-dimensional, there are $\gamma_1, \ldots, \gamma_r \in \langle \Lambda_{red} \rangle$ such that 
$$\sum_{i=1}^r(\gamma_i-id)(A)=\sum_{i=1}^r(\gamma_i-id)(U') = U'.$$ 
As a consequence,  one sees that $\Xi:=\sum_{i=1}^r(\gamma_i-id)(\Lambda_r)$ is relatively dense in $U'$. But $(\gamma_i - id)(\Lambda_r)$ is contained in $\gamma_i (\Lambda_r) + \Lambda_r$. So it is covered by finitely many translates of $\Lambda_r$.  Thus, $\Xi$ is a uniform approximate lattice. But $\Xi$ is covered by finitely many translates of $\Lambda_r^2 \cap U'$ (Lemma \ref{Lemma: Intersection of commensurable sets}). In other words, $\Lambda_r^2 \cap U'$ is an approximate lattice in $U'$. 

It remains now to prove the result in $U'$. But for any $N \in \mathcal{N}$ maximal for inclusion, Lemma \ref{Lemma: Intersection with centraliser of co-rank 1 subgroups} implies that $\Lambda_r^2 \cap \left(C_G(N) \cap U'\right)$ is an approximate lattice. Since $N$ is normal, $\left(C_G(N) \cap U'\right)$ is Zariski-closed and normal in $L \ltimes A$.  As above, we may find $U'''$ Zariski-closed and normal such that $U' = \left(C_G(N) \cap U'\right) \oplus U'''$. Considering the linear maps $\gamma - id$ as above for $\gamma \in  N$ we conclude similarly, that $\Lambda_r^2 \cap U'''$ is an approximate lattice as well. Applying inductively this process in $U'''$ yields the first part of Proposition \ref{Proposition: Precise decomposition quasi-cocycle}. 

Let us finally prove the last part of the conclusion. Notice that since the action of $\langle \Lambda_{red} \rangle$ comes from the action of $L$ which is algebraic, if $\bar{N_1}=\bar{N_2}$ for $N_1, N_2 \in \mathcal{N}$, then any irreducible representation present in $U^{N_1}$ is trivial on $N_2$ and conversely. So $U^{N_1}=U^{N_2}$ and, thus, $N_1=N_2$. Moreover, looking at the action of $\Lambda_{red}$ on $\Lambda^2 \cap U^N$ for some $N \in \mathcal{N}$ we see that $\Lambda_{red}(\Lambda^2 \cap U^N) \subset \Lambda^4 \cap U^N$ which is uniformly discrete. So the projection of $\Lambda_{red}$ to $L/N'$ where $N'$ denotes the kernel of the action of $L$ on $U^N$ is uniformly discrete. According to Proposition \ref{Proposition: Intersection and projections approximate lattices w/ closed subgroups}, $\Lambda_{red}^2 \cap N'$ is an approximate lattice and its Zariski-closure contains the Zariski-connected component of the identity of $N'$ by \cite[A.11]{hrushovski2020beyond}. But $\bar{N} \subset N'$ and $\Lambda_{red}^2 \cap N'\subset  N$. So $\bar{N}$ has finite index in $N'$ which implies that $\Lambda^2 \cap \bar{N}$ is an approximate lattice in $\bar{N}$.  The result is proved.
\end{proof}

\subsection{About intersection theorems for non-solvable groups}
We conclude with a simple consideration concerning intersection theorems for non-solvable and non-normal subgroups.  We prove that Levi subgroups that intersect an approximate lattice in an approximate lattice are pairwise conjugate by a commensurating element. 

\begin{corollary}\label{Corollary: Choice of Levi subgroup is irrelevant}
 Let $\Lambda$ be an approximate lattice in a Zariski-connected $S$-adic affine algebraic group $G$ that is Zariski-dense. Let $U$ be the unipotent radical of $G$. Suppose that $G$ contains no non-trivial normal semi-simple subgroup. Suppose that $L_1, L_2$ are Zariski-closed subgroups of $G$ such that: 
 \begin{enumerate}[label=(\roman*)]
 \item either, $L_1\cap \Rad(G) \subset L_2 \cap \Rad(G)$, $L_1\cap \Rad(G)$ is normal in $G$ and both $L_1$ and $L_2$ contain a semi-simple Levi subgroup;
 \item or, $L_1\cap U \subset L_2 \cap U$, $L_1\cap \Rad(G)$ is normal in $G$ and both $L_1$ and $L_2$ contain a reductive Levi subgroup.
 \end{enumerate}
 If $L_1 \cap \Lambda^2$ and $L_2 \cap \Lambda^2$ are approximate lattices in $L_1$ and $L_2$ respectively, then there is $\gamma \in \Comm_G(\Lambda) \cap U$ such that $L_1^{\gamma} \subset L_2$.
\end{corollary}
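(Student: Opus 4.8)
The plan is to reduce the statement to a claim about Levi subgroups, to promote $\Lambda$ to a generalized arithmetic approximate subgroup of an ambient $\mathbb{Q}$-group $\mathbf{G}$, to check that the hypotheses force the relevant Levi subgroups to be $\mathbb{Q}$-rational for this model up to conjugacy by a commensurating unipotent element, and finally to conjugate one $\mathbb{Q}$-rational Levi subgroup onto the other by a \emph{rational} element of the unipotent radical of $\mathbf{G}$, which then automatically lies in $\Comm_G(\Lambda)\cap U$. For the reduction, fix a semisimple Levi subgroup $S_i\subseteq L_i$ of $G$ in case (i) and a reductive Levi subgroup $R_i\subseteq L_i$ of $G$ in case (ii); write $M_i$ for either. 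Since $M_i$ surjects onto $G/\Rad(G)$, one gets $L_i=(L_i\cap\Rad(G))\,M_i$ in case (i) and $L_i=(L_i\cap U)\,M_i$ in case (ii). Set $N_i:=L_i\cap\Rad(G)$ in case (i) and $N_i:=L_i\cap U$ in case (ii). In both cases $N_1$ is normal in $G$ — in case (ii) because $L_1\cap U=(L_1\cap\Rad(G))\cap U$ is an intersection of two normal subgroups — hence invariant under conjugation, and the hypotheses give $N_1\subseteq L_2$. Thus for any $\gamma\in U$ with $M_1^\gamma=M_2$ we obtain $L_1^\gamma=N_1\,M_1^\gamma=N_1\,M_2\subseteq L_2$, so it suffices to produce $\gamma\in\Comm_G(\Lambda)\cap U$ with $M_1^\gamma=M_2$ (or with $M_1^\gamma$ contained in a Levi subgroup of $L_2$, which works just as well).

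Next I would arithmetise. As $G$ has no non-trivial normal semisimple subgroup, its semisimple Levi subgroup acts almost faithfully on $U$, and the intersection theorems apply: by Theorem \ref{Theorem: Radical is hereditary}, Proposition \ref{Proposition: Unipotent radical is hereditary} and Proposition \ref{Lemma: Intersection with the derived subgroup} one finds a Zariski-connected normal unipotent subgroup $V\subseteq U$ — large enough to contain a candidate conjugator — with $\Lambda^2\cap V$ an approximate lattice in $V$ (here one uses the normality reductions above to pass, when necessary, to a quotient of $G$ with smaller radical). Applying Proposition \ref{Proposition: Meyer's theorem $S$-adic unipotent group} to $\Lambda^2\cap V$ and then Proposition \ref{Proposition: Arithmeticity from action on subgroups} to the conjugation action of $\langle\Lambda\rangle$ on $V$ produces a $\mathbb{Q}$-group $\mathbf{G}$, a surjection $\pi\colon\mathbf{G}(\mathbb{A}_S)\to G$ and a window exhibiting $\Lambda$ as commensurable with a generalized arithmetic approximate subgroup for $\mathbf{G}$; by Corollary \ref{Corollary: Unipotent rational points in commensurator} the $\pi$-image of the rational points of the unipotent radical of $\mathbf{G}$ lies in $\Comm_G(\Lambda)$, so the $\pi$-images of the $\mathbb{Q}$-rational Levi subgroups of $\mathbf{G}$ form a single orbit under $\Comm_G(\Lambda)\cap U$.

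Now I would run the same construction for the approximate lattice $\Lambda^2\cap L_i$ of $L_i$. Because $\Lambda^2\cap L_i\subseteq\Lambda$, the canonicity of Meyer's theorem (Corollary \ref{Remark: Meyer theorem is canonical}) together with the uniqueness in the $S$-adic Schreiber theorem (Proposition \ref{Proposition: Schreiber's theorem for S-adic unipotent groups}) forces this construction to be compatible with the one for $\Lambda$ and shows that $M_i$ is conjugate, by an element of $\Comm_G(\Lambda)\cap U$, to $\pi(\mathbf{M}_i(\mathbb{A}_S))$ for a $\mathbb{Q}$-rational Levi subgroup $\mathbf{M}_i\subseteq\mathbf{G}$. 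Since $\mathbf{M}_1$ and $\mathbf{M}_2$ are both $\mathbb{Q}$-rational Levi subgroups of $\mathbf{G}$, the transporter $\{u:\mathbf{M}_1^u=\mathbf{M}_2\}$ is a $\mathbb{Q}$-torsor under the normaliser of $\mathbf{M}_2$ in the unipotent radical of $\mathbf{G}$; as unipotent groups in characteristic $0$ have trivial $H^1$, this torsor has a $\mathbb{Q}$-point $v$, and $\pi(v)\in\Comm_G(\Lambda)\cap U$. Composing the three conjugators — from $M_1$ to $\pi(\mathbf{M}_1(\mathbb{A}_S))$, then $\pi(v)$, then from $\pi(\mathbf{M}_2(\mathbb{A}_S))$ back to $M_2$ — gives $\gamma\in\Comm_G(\Lambda)\cap U$ with $M_1^\gamma=M_2$, whence $L_1^\gamma\subseteq L_2$ by the first paragraph.

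The main obstacle is the compatibility step: extracting from the bare hypothesis that $\Lambda^2\cap L_i$ is an approximate lattice in $L_i$ the conclusion that $M_i$ is conjugate, by a \emph{commensurating} unipotent element, to a $\mathbb{Q}$-rational Levi subgroup of the arithmetic model already attached to $\Lambda$. This forces one to compare the arithmetisations of $\Lambda$ and of $\Lambda^2\cap L_i$ — precisely what Corollary \ref{Remark: Meyer theorem is canonical} and the uniqueness in Proposition \ref{Proposition: Schreiber's theorem for S-adic unipotent groups} are for — and to verify, using the normality of $L_1\cap\Rad(G)$ and the inclusions in (i) and (ii), that the unipotent subgroups one works with meet $\Lambda^2$ in approximate lattices, so that Corollary \ref{Corollary: Unipotent rational points in commensurator} is applicable. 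By contrast, the conjugacy of two $\mathbb{Q}$-rational Levi subgroups by a rational unipotent element is a soft consequence of the vanishing of $H^1$ for unipotent groups, and the final assembly is routine.
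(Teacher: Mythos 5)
Your high-level strategy matches the paper's: arithmetise $\Lambda$ via the intersection theorems and Meyer's theorem, realise the Levi subgroups involved as $\mathbb{Q}$-rational Levi subgroups of a $\mathbb{Q}$-group $\mathbf{G}$, conjugate one into the other by a $\mathbb{Q}$-point of the unipotent radical of $\mathbf{G}$, and invoke Corollary \ref{Corollary: Unipotent rational points in commensurator} to see that this element commensurates $\Lambda$. The first paragraph's reduction to conjugating the Levi subgroups $M_i$ is sound (it plays the same role as the paper's passage to the quotient $\hat{G}=G/(\Rad(G)\cap L_1)$), and the final conjugacy-over-$\mathbb{Q}$ step is also in line with the paper.

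The gap is exactly where you flag it, and the tools you cite do not close it. You propose to arithmetise $\Lambda^2\cap L_i$ separately and then identify its arithmetic model with a piece of the model for $\Lambda$ by appealing to Corollary \ref{Remark: Meyer theorem is canonical} and the uniqueness in Proposition \ref{Proposition: Schreiber's theorem for S-adic unipotent groups}. Neither applies: Corollary \ref{Remark: Meyer theorem is canonical} concerns a \emph{surjective homomorphism onto a quotient unipotent group} $U\to U'$ and the induced relation between Meyer data for $\Lambda$ and $\phi(\Lambda)$, which says nothing about the intersection of $\Lambda$ with a non-normal subgroup $L_i$; and the uniqueness in Schreiber's theorem is uniqueness of the unipotent hull $U'$ coarsely equal to $\Lambda$, which again carries no information about Levi subgroups or about matching two independently produced arithmetisations. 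As a consequence, the extra conjugator you introduce to move $M_i$ onto $\pi(\mathbf{M}_i(\mathbb{A}_S))$ is never actually produced. The paper avoids the comparison problem entirely: having reduced to the case $\Rad(G)\cap L_1=\{e\}$ (so that $L_1$ is a semi-simple Levi subgroup of $\tilde G=S[G,U]$) and to $L_2\subseteq \tilde G$ using Proposition \ref{Lemma: Intersection with the derived subgroup} and Corollary \ref{Lemma: Stability approximate lattices under intersection ambient group}, it applies part (3) of Proposition \ref{Proposition: Arithmeticity from action on subgroups} \emph{once}, to $\Lambda^2\cap\tilde G$, and then constructs the $\mathbb{Q}$-rational Levi candidates directly inside the resulting model as $\mathbf{L}_i=$ Zariski-closure of $(L_i\times G_2)\cap\mathbf{H}(\mathbb{Z}_S)$. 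Proving that $\mathbf{L}_1$ is a semi-simple Levi subgroup of $\mathbf{G}$ and that $\mathbf{L}_2$ contains one is the real content of the argument (it uses Borel density and the injectivity of the projection on $\mathbf{G}(\mathbb{Q})$); this is precisely the work that your proposal defers to a ``compatibility'' lemma that the paper does not supply. Relatedly, your $V$ ``large enough to contain a candidate conjugator'' is never pinned down; the paper uses $[G,U]$ specifically, which is the largest unipotent subgroup for which the intersection theorem is proved and which is characterised by the property that $G/[G,U]$ is a direct product of a reductive group and an abelian unipotent group (Remark \ref{Remark: Alternative charac. derived subgroup}), a fact you would need in order to see that the conjugator you produce lands in $U$.
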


\begin{proof}
We will treat case (i) only - case (ii) follows from the same arguments. We assume first that $\Rad(G) \cap L_1$ is trivial. Note that if $S$ is any semi-simple Levi subgroup, then $\tilde{G}:=S[G,U]$ is a normal subgroup of $G$ that contains all semi-simple Levi subgroups of $G$ (Remark \ref{Remark: Alternative charac. derived subgroup}).  In particular, $\tilde{G}$ contains $L_1$. Consider $L_2':=L_2 \cap S[G,U]$. It must contain a semi-simple Levi subgroup and $\Lambda^2 \cap L_2'$ is an approximate lattice in $L_2'$ (Corollary \ref{Lemma: Stability approximate lattices under intersection ambient group} and Proposition \ref{Lemma: Intersection with the derived subgroup}). So $L_1$ and $L_2'$ also satisfy $(i)$. Whence, we may assume $L_2 \subset S[G,U]$ from the start. Now,  $L_1$ is a Levi subgroup of $\tilde{G}$ and $\Lambda^2 \cap L_1$ is an approximate lattice in $L_1$.  So according to part (3) of Proposition \ref{Proposition: Arithmeticity from action on subgroups}, $\Lambda^2 \cap \tilde{G}$ is a generalized arithmetic approximate lattice. Take $\mathbf{H} \subset \GL_n$ a $\mathbb{Q}$-subgroup, $G_1 \times G_2 \subset \mathbf{H}(\mathbb{A}_S)$ a product of two Zariski-closed subgroups such that $G_1$ can be identified with $G$ via a regular homomorphism, $(G_1,G_2, (G_1 \times G_2) \cap \mathbf{H}(\mathbb{Z}_S))$ is a cut-and-project scheme and $\Lambda$ is commensurable with a model set coming from it. We assume that $\Lambda$ is equal to the said model set. Let furthermore $\mathbf{G}$ denote the Zariski-closure of $(G_1 \times G_2) \cap \mathbf{H}(\mathbb{Z}_S)$. For $i \in \{1,2\}$ let $\Gamma_i$ denote $\left(L_i \times G_2\right) \cap \mathbf{H}(\mathbb{Z}_S)$ and $\mathbf{L}_i$ denote the Zariski-closure of $\Gamma_i$. The projection of $\mathbf{L}_i(\mathbb{A}_S)$ to $L_i$ contains $\langle \Lambda \rangle \cap L_i$. By our density assumption on $\Lambda$ and the Borel density theorem \cite[A.11]{hrushovski2020beyond}, we have that $\mathbf{L}_1(\mathbb{A}_S)$ projects surjectively to $L_1$ and the projection of $\mathbf{L}_2(\mathbb{A}_S)$ contains a semi-simple Levi subgroup of $L_2$. Since the projection to $G_1$ is injective on $\mathbf{G}(\mathbb{Q})$, we have that $\mathbf{L}_1$ is semi-simple. We will show in addition that $\mathbf{L}_1$ is a semi-simple Levi subgroup of $\mathbf{G}$. Indeed, there is a semi-simple subgroup $\mathbf{L}_1'$ of $\mathbf{G}$ such that $\mathbf{L}_1'$ is normalised by $\mathbf{L}_1$, $\mathbf{S}=\mathbf{L}_1\mathbf{L}_1'$ is a semi-simple Levi subgroup of $\mathbf{G}$ and $\mathbf{L}_1(\mathbb{A}_S) \cap \mathbf{L}_1'(\mathbb{A}_S)$ is finite. The projection of $\mathbf{S}(\mathbb{A}_S)$ to $G_1$ is semi-simple and contains $L_1$. So it is equal to $L_1$. By surjectivity of $\mathbf{L}_1(\mathbb{A}_S) \rightarrow L_1$, we find $\mathbf{L}_1'(\mathbb{A}_S) \subset G_2$. But, again, the projection to $G_1$ is injective on $\mathbf{G}(\mathbb{Q})$. So $\mathbf{L}_1'(\mathbb{Q})=\{e\}$ i.e. $\mathbf{L}_1$ is already a semi-simple Levi subgroup. A similar argument tells us that $\mathbf{L}_2$ contains a Levi subgroup of $\mathbf{G}$. But any two Levi subgroups of $\mathbf{G}$ are conjugate of one another under an element of the unipotent radical $\mathbf{U}$ of $\mathbf{G}$ (\cite[VIII Theorem 4.3]{MR620024}). Let $u_0 \in \mathbf{U}(\mathbb{Q})$ be an element that conjugates $\mathbf{L}_1$ into a semi-simple Levi subgroup contained in $\mathbf{L}_2$. Let $u \in [G,U]$ be its projection to $G_1$ (recall that $G_1$ and $G$ are identified). Then $L_1^{u} \subset L_2$ and, by Corollary \ref{Corollary: Unipotent rational points in commensurator}, $u \in \Comm_G(\Lambda)$. 

Now,  let us explain how to deduce the case $\Rad(G) \cap L_1 \neq \{e\}$. Let $\hat{G}$ denote $G/\left(\Rad(G) \cap L_1\right)$. If $\hat{U}$ denotes the unipotent radical of $\hat{G}$, then $[\hat{G},\hat{U}]$ is the image of $[G,U]$ (e.g. Remark \ref{Remark: Alternative charac. derived subgroup}). Since $\Lambda^2 \cap \Rad(G) \cap L_1$ is an approximate lattice in $\Rad(G) \cap L_1$ (Theorem \ref{Theorem: Radical is hereditary}), the projection $\hat{\Lambda}$ of $\Lambda$ to $\hat{G}$ is an approximate lattice. We have reduced, in $\hat{G}$, to the above set-up. Finally, the element $u \in [\hat{G},\hat{U}] \cap \Comm_{\hat{G}}(\hat{\Lambda})$ can be lifted to an element of  $[G,U] \cap \Comm_{G}(\Lambda)$. Indeed, by Corollary \ref{Remark: Meyer theorem is canonical} the unipotent $\mathbb{Q}$-group $\hat{\mathbf{U}}$ given by Meyer's theorem (Proposition \ref{Proposition: Meyer's theorem $S$-adic unipotent group}) applied to $\hat{\Lambda}^2 \cap [\hat{G},\hat{U}]$ is the projection of the unipotent $\mathbb{Q}$-group $\mathbf{U}$ given by Meyer's theorem (Proposition \ref{Proposition: Meyer's theorem $S$-adic unipotent group}) applied to $\Lambda^2 \cap [G,U]$. Hence, there is $u' \in [G,U]$ in the image of $\mathbf{U}(\mathbb{Q})$ that projects to $u$ in $[\hat{G},\hat{U}]$. By Corollary \ref{Corollary: Unipotent rational points in commensurator}, $u'$ belongs to the commensurator of $\Lambda$ and $L_1^{u'} \subset L_2$. So Corollary \ref{Corollary: Choice of Levi subgroup is irrelevant} is proved.
\end{proof}

\begin{appendix}
\section{Appendix: Approximate subgroups}\label{Appendix}

In this appendix we collect a number of results pertaining to approximate subgroups in a context more general than $S$-adic linear groups. All of them are either already well-known - in which case we provide references - or simple observations.  Nevertheless, they are useful tools and can reveal particularly powerful in the right framework.  Original results contained in the last part of this appendix will also be used in the companion paper \cite{mac2023structure}.

\subsection{Some commensurability results}
We have used above a number of elementary results concerning intersections and pull-backs of approximate lattices.  For the sake of completeness we include the relevant statements below. 

 \begin{lemma}\label{Lemma: Pull-back of commensurable approximate subgroups}
  Let $\Lambda_1$ and $\Lambda_2$ be two commensurable approximate subgroups of a group $G$. Let $\phi: H\rightarrow G$ be a group homomorphism. Then $\phi^{-1}(\Lambda_1^2)$ and $\phi^{-1}(\Lambda_2^2)$ are commensurable approximate subgroups of $H$. 
 \end{lemma}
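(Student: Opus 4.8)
The plan is to isolate a single elementary covering principle for preimages and then apply it twice.

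First I would prove the following: for any approximate subgroup $\Lambda$ of $G$ and any finite $F \subset G$, there are finite $E, E' \subset H$ with $\phi^{-1}(F\Lambda) \subset E\,\phi^{-1}(\Lambda^2)$ and $\phi^{-1}(\Lambda F) \subset \phi^{-1}(\Lambda^2)\,E'$. For the first inclusion, write $\phi^{-1}(F\Lambda) = \bigcup_{f \in F}\phi^{-1}(f\Lambda)$ and, for each $f$ with $\phi^{-1}(f\Lambda)\neq\emptyset$, pick $h_f\in\phi^{-1}(f\Lambda)$, say $\phi(h_f)=f\mu_f$ with $\mu_f\in\Lambda$; then any $h$ with $\phi(h)=f\mu$, $\mu\in\Lambda$, satisfies $\phi(h_f^{-1}h)=\mu_f^{-1}\mu\in\Lambda^{-1}\Lambda=\Lambda^2$, so $h\in h_f\,\phi^{-1}(\Lambda^2)$; thus $E:=\{h_f\}$ works. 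The second inclusion follows from the first applied to $F^{-1}$ by taking inverses, using that $\Lambda$, $\Lambda^2$, and hence $\phi^{-1}(\Lambda^2)$, are symmetric.

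Then I would deduce that each $\phi^{-1}(\Lambda_i^2)$ is an approximate subgroup: it is symmetric and contains $e_H$, and since there is a finite $F_i$ with $\Lambda_i^2\subset F_i\Lambda_i$, hence $\Lambda_i^4\subset F_i^3\Lambda_i$, the covering principle applied to $\Lambda_i$ and $F_i^3$ gives $(\phi^{-1}(\Lambda_i^2))^2\subset\phi^{-1}(\Lambda_i^4)\subset E_i\,\phi^{-1}(\Lambda_i^2)$. For commensurability I would first note that commensurability is an equivalence relation on approximate subgroups (chain the defining finite sets) and that $\Lambda\sim\Lambda^2$ for every approximate subgroup $\Lambda$ (as $\Lambda\subset\Lambda^2\subset F\Lambda\cap\Lambda F$); hence $\Lambda_1\sim\Lambda_2$ forces $\Lambda_1^2\sim\Lambda_2^2$, so there is a finite $F$ with $\Lambda_1^2\subset F\Lambda_2^2$, $\Lambda_1^2\subset\Lambda_2^2 F$, and symmetrically with the indices exchanged. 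Writing $\Lambda_2^2\subset F_2\Lambda_2$ and $\Lambda_2^2\subset\Lambda_2 F_2'$ turns these into $\Lambda_1^2\subset(FF_2)\Lambda_2$ and $\Lambda_1^2\subset\Lambda_2(F_2'F)$, and feeding them into the covering principle yields finite $E,E'$ with $\phi^{-1}(\Lambda_1^2)\subset E\,\phi^{-1}(\Lambda_2^2)$ and $\phi^{-1}(\Lambda_1^2)\subset\phi^{-1}(\Lambda_2^2)\,E'$; the same with the indices exchanged gives the reverse inclusions, and the union of all the finite sets involved witnesses commensurability.

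The one delicate point is the covering principle: one must not try to cover $\phi^{-1}(F\Lambda)$ by translates of $\phi^{-1}(\Lambda)$ itself — this is false in general, since preimages of approximate subgroups need not be approximate subgroups — but instead land inside $\phi^{-1}(\Lambda^2)$, where the identity $\Lambda^{-1}\Lambda=\Lambda^2$ lets the point-picking argument close up. The remaining manipulations are routine uses of the standard inclusions $\Lambda^n\subset F^{n-1}\Lambda$.
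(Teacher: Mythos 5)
The paper states this lemma without proof, listing it among ``elementary results'' in the appendix, so there is no proof of record to compare against. Your argument is correct and self-contained. The covering principle is the right abstraction: the pivoting step $\phi(h_f^{-1}h)=\mu_f^{-1}\mu\in\Lambda^2$ exploits exactly the symmetry of $\Lambda$, and you are right to flag that the naive attempt to cover $\phi^{-1}(F\Lambda)$ by translates of $\phi^{-1}(\Lambda)$ itself would not close up — this is the one genuine subtlety in the statement and you handle it cleanly. The remaining steps (approximate-subgroup check via $\Lambda_i^4\subset F_i^3\Lambda_i$, and reduction of commensurability to the covering principle via $\Lambda\sim\Lambda^2$ and transitivity) are standard manipulations carried out correctly.
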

 \begin{lemma}\label{Lemma: Intersection of approximate subgroups}
Let $\Lambda_1, \ldots, \Lambda_n$ be $K_1,\ldots,K_n$-approximate subgroups of some group. We have: 
\begin{enumerate}
\item if $k_1, \ldots, k_n \geq 2$, then there is $F$ with $|F|\leq K_1^{k_1-1} \cdots K_n^{k_n-1}$ such that $$\Lambda_1^{k_1} \cap \cdots \cap \Lambda_n^{k_n} \subset F\left(\Lambda_1^2 \cap \cdots \cap \Lambda_n^2\right);$$ 
\item if $k_1, \ldots, k_n \geq 2$, then $\Lambda_1^{k_1} \cap \cdots \cap \Lambda_n^{k_n}$ is a $K_1^{2k_1-1}\cdots K_n^{2k_n-1}$-approximate subgroup.
\item if $\Lambda_1', \ldots, \Lambda_n'$ is a family of approximate subgroups such that $\Lambda_i'$ is commensurable with $\Lambda_i$ for all $1 \leq i \leq n$, then $\Lambda_1'^2 \cap \cdots \cap \Lambda_n'^2$ is commensurable with $\Lambda_1^2\cap \cdots \cap \Lambda_n^2$.
\end{enumerate}

\end{lemma}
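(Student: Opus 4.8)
The plan is to prove part (1) first and then deduce (2) and (3) from it together with elementary bookkeeping. For (1), I would begin with the standard fact that a $K$-approximate subgroup $\Lambda$ satisfies $\Lambda^{k}\subset F\Lambda$ for some \emph{finite} $F$ with $|F|\le K^{k-1}$ (immediate induction: if $\Lambda^{m}\subset F_{m}\Lambda$ then $\Lambda^{m+1}=\Lambda^{m}\Lambda\subset F_{m}\Lambda^{2}\subset F_{m}F_{1}\Lambda$). Applying this to each $\Lambda_i$ yields finite sets $F_i$ with $\Lambda_i^{k_i}\subset F_i\Lambda_i$ and $|F_i|\le K_i^{k_i-1}$. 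The crucial step is then the ``diagonal'' decomposition
\[
\bigcap_{i=1}^{n}\Lambda_i^{k_i}\ \subset\ \bigcap_{i=1}^{n}F_i\Lambda_i\ =\ \bigcup_{(f_1,\dots,f_n)\in F_1\times\cdots\times F_n}\ \bigcap_{i=1}^{n}f_i\Lambda_i,
\]
combined with the observation that each nonempty piece $\bigcap_i f_i\Lambda_i$ lies in a single left translate of $\bigcap_i\Lambda_i^{2}$: fixing $z$ in the piece, any other element $x$ of it can be written $x=f_i\mu_i$, $z=f_i\lambda_i$ with $\lambda_i,\mu_i\in\Lambda_i$, so $z^{-1}x=\lambda_i^{-1}\mu_i\in\Lambda_i^{2}$ for every $i$, i.e.\ $z^{-1}x\in\bigcap_i\Lambda_i^{2}$. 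Collecting one such $z$ per nonempty tuple gives the required $F$ with $|F|\le\prod_i|F_i|\le\prod_iK_i^{k_i-1}$.

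For (2), observe that $L:=\bigcap_i\Lambda_i^{k_i}$ is symmetric and contains $e$ (each $\Lambda_i^{k_i}$ is), and that $L^{2}\subset\bigcap_i\Lambda_i^{2k_i}$. Applying part (1) with $k_i$ replaced by $2k_i$ produces $F$ with $\bigcap_i\Lambda_i^{2k_i}\subset F\bigl(\bigcap_i\Lambda_i^{2}\bigr)$ and $|F|\le\prod_iK_i^{2k_i-1}$; since $k_i\ge 2$ we have $\bigcap_i\Lambda_i^{2}\subset L$, hence $L^{2}\subset FL$, which is exactly the asserted $\prod_iK_i^{2k_i-1}$-approximate subgroup bound.

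For (3), I would first note that commensurability of $\Lambda_i'$ with $\Lambda_i$ forces commensurability of $\Lambda_i'^{2}$ with $\Lambda_i^{2}$: this is transitivity of commensurability applied to the chain $\Lambda_i'^{2}\sim\Lambda_i'\sim\Lambda_i\sim\Lambda_i^{2}$, where the outer commensurabilities come from $e\in\Lambda_i$ and the defining inclusion $\Lambda^{2}\subset F\Lambda$. So choose finite $D_i$ with $\Lambda_i'^{2}\subset D_i\Lambda_i^{2}$ and rerun the diagonal trick: $\bigcap_i\Lambda_i'^{2}\subset\bigcup_{(d_i)}\bigcap_i d_i\Lambda_i^{2}$, and re-centering each nonempty piece around a point $z$ gives $\bigcap_i d_i\Lambda_i^{2}\subset z\bigl(\bigcap_i\Lambda_i^{4}\bigr)$; finally part (1) (with $k_i=4$) absorbs $\bigcap_i\Lambda_i^{4}$ into finitely many translates of $\bigcap_i\Lambda_i^{2}$. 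Thus $\bigcap_i\Lambda_i'^{2}$ is covered by finitely many translates of $\bigcap_i\Lambda_i^{2}$; the symmetric argument gives the reverse inclusion, and (for symmetric sets the two-sided form of commensurability follows from the one-sided one) this yields the claim.

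The main obstacle — and the reason the argument is shaped this way — is avoiding the compounding blow-up of exponents. If one instead dropped each $\Lambda_i^{k_i}$ only down to $\Lambda_i^{2}$ and re-centered an affine piece $f\Lambda_i^{2}$ inside itself, the re-centering would land in $\Lambda_i^{4}$ rather than $\Lambda_i^{2}$, and this doubling would pile up across the $n$ factors (and across an induction), destroying the sharp constant $\prod_iK_i^{k_i-1}$. The fix is precisely to drop all the way to degree $1$ first, via $\Lambda_i^{k_i}\subset F_i\Lambda_i$, so that the re-centering step produces exactly $\Lambda_i\Lambda_i=\Lambda_i^{2}$ with no loss; once (1) is established with this constant, (2) and (3) are routine.
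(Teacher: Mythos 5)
Your proof is correct. The paper states this lemma in the appendix without proof, presenting it as one of the "simple observations," so there is nothing to compare against; but your argument is exactly the natural one, and in particular your ``drop to degree $1$, then diagonalize, then re-center inside a single translate of $\bigcap_i\Lambda_i^2$'' device reproduces the stated constants $\prod_i K_i^{k_i-1}$ and $\prod_i K_i^{2k_i-1}$ on the nose, which is strong evidence it is the intended proof. All the small checks you implicitly need go through: symmetry of each $\Lambda_i$ makes $z^{-1}x=\lambda_i^{-1}\mu_i$ land in $\Lambda_i^2$ in the re-centering step; $e\in\Lambda_i$ gives $\Lambda_i^2\subset\Lambda_i^{k_i}$ when $k_i\ge 2$, which is what closes part (2); and in part (3) the one-sided coverings upgrade to the two-sided form of commensurability used in the paper because all the sets involved ($\Lambda_i^2$, $\Lambda_i'^2$, and their intersections) are symmetric.
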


\begin{lemma}\label{Lemma: Intersection of commensurable sets}
Take $X,Y_1,\ldots, Y_n$ subsets of a group $G$. Assume that there exist $F_1, \ldots, F_n \subset G$ finite such that $X \subset F_iY_i$ for all $i \in \{1,\dots, n\}$. Then there is $F' \subset X$ with $|F'| \leq |F_1|\cdots |F_n|$ such that $$X \subset F'\left( Y_1^{-1}Y_1 \cap \cdots \cap Y_n^{-1}Y_n\right).$$ 
\end{lemma}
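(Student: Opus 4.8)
The statement is a purely combinatorial fact about a single subset $X$ that is covered by finitely many left translates of each of several subsets $Y_i$, and the goal is to cover $X$ by few left translates of the intersection of the "difference sets" $Y_i^{-1}Y_i$. The guiding idea is the classical pigeonhole argument behind the Ruzsa covering lemma: if $X \subset F_i Y_i$ with $F_i$ finite, then the fibres of the map sending $x \in X$ to the tuple of "which translate of each $Y_i$ contains $x$" partition $X$ into at most $|F_1|\cdots|F_n|$ classes, and within each class the pairwise quotients land in $\bigcap_i Y_i^{-1} Y_i$.

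Concretely, I would proceed as follows. First, for each $i$ and each $x \in X$, fix a choice of $f_i(x) \in F_i$ with $x \in f_i(x) Y_i$; this defines a map $x \mapsto (f_1(x), \ldots, f_n(x)) \in F_1 \times \cdots \times F_n$, whose image has size at most $|F_1|\cdots|F_n|$. Partition $X$ into the (nonempty) fibres of this map, and pick one representative $x_j$ in each fibre, collecting them into a set $F' \subset X$ with $|F'| \le |F_1|\cdots|F_n|$. Now I claim $X \subset F'\bigl(Y_1^{-1}Y_1 \cap \cdots \cap Y_n^{-1}Y_n\bigr)$: given $x \in X$, let $x_j$ be the representative in the same fibre, so $f_i(x) = f_i(x_j)$ for every $i$. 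Then for each $i$ both $x$ and $x_j$ lie in $f_i(x) Y_i$, hence $x_j^{-1} x = x_j^{-1} f_i(x) \cdot f_i(x)^{-1} x \in Y_i^{-1} Y_i$. Since this holds for all $i$ simultaneously, $x_j^{-1} x \in \bigcap_i Y_i^{-1}Y_i$, i.e. $x = x_j\bigl(x_j^{-1}x\bigr) \in F'\bigl(\bigcap_i Y_i^{-1}Y_i\bigr)$, as desired.

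There is essentially no obstacle here — the only point to be slightly careful about is the direction of the translates (the hypothesis is $X \subset F_i Y_i$, i.e. left cosets, so one forms $Y_i^{-1}Y_i$ rather than $Y_iY_i^{-1}$, and the representatives must be chosen inside $X$ to get $F' \subset X$). The argument uses nothing beyond elementary set theory and the axiom of choice for the selection maps; no earlier result from the paper is needed.
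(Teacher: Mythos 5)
Your proof is correct, and it is the standard Ruzsa-covering/pigeonhole argument: fix choice functions $f_i\colon X\to F_i$, partition $X$ by the fibres of $x\mapsto(f_1(x),\dots,f_n(x))$, and take one representative per fibre. The paper states this lemma without proof as one of the elementary observations in the appendix, so there is no argument in the paper to compare against, but the approach you give is the natural (and essentially unique) one, and all the details — including the care about the direction of the translates and choosing $F'\subset X$ — are handled correctly.
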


 \subsection{Good models }
 The notion of good models has appeared a number of times in the considerations above.  We refer to \cite{machado2019goodmodels} for background regarding this notion.

\begin{definition*}
Let $\Lambda$ be an approximate subgroup of a group $\Gamma$ that commensurates it. A group homomorphism $f: \Gamma \rightarrow H$ with target a locally compact group $H$ is called a \emph{good model (of $(\Lambda, \Gamma)$)} if:
\begin{enumerate}
\item $f(\Lambda)$ is relatively compact;
 \item there is $U \subset H$ a neighbourhood of the identity such that $f^{-1}(U) \subset \Lambda$.
\end{enumerate}
Any approximate subgroup commensurable with an approximate subgroup that admits a good model is said \emph{laminar}.
\end{definition*}

Certain good models have particularly handy properties: 

\begin{lemma}[ \cite{machado2019goodmodels}]\label{Lemma: Bohr compactification and abstract automorphisms}
Let $\Lambda$ be an approximate subgroup of a group $\Gamma$. Suppose that $Comm_{\Gamma}(\Lambda)=\Gamma$ and that $\Lambda$ has a good model. Then there is a good model $f: \Gamma \rightarrow H_0$ of $(\Lambda, \Gamma)$ such that for any group endomorphism $a$ of $\Gamma$ such that $a(\Lambda)$ is commensurable with $\Lambda$ there is a unique continuous group endomorphism $\alpha$ of $H_0$ such that the following diagram commutes
\[\begin{tikzcd}
\Gamma \arrow{r}{f} \arrow[swap]{d}{a} & H_0 \arrow{d}{\alpha} \\
\Gamma \arrow{r}{f} & H_0
\end{tikzcd}
\]
\end{lemma}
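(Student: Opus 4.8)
The plan is to produce the model $H_0$ by a universal construction, so that the functoriality (existence and uniqueness of $\alpha$) comes for free. First I would invoke the hypothesis that $\Lambda$ has a good model, say $f_1 \colon \Gamma \to H_1$ with $H_1$ locally compact. The image $\overline{f_1(\Gamma)}$ is a locally compact group in which $\overline{f_1(\Lambda)}$ is a relatively compact identity neighbourhood-generating set; replacing $H_1$ by this closure I may assume $f_1$ has dense image. The idea is then to take $H_0$ to be a suitable \emph{universal} (initial) object among good models: concretely, consider the family of all good models $f_i \colon \Gamma \to H_i$ (with dense image) up to the obvious equivalence, and form the closure $H_0$ of the image of the diagonal map $\Gamma \to \prod_i H_i$. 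Since each $f_i(\Lambda)$ is relatively compact, the diagonal image of $\Lambda$ lands in a product of relatively compact sets, hence is relatively compact in $H_0$; and since one of the $f_i$ (namely $f_1$) already satisfies condition (2), the pull-back of a suitable neighbourhood of the identity of $H_0$ is contained in $\Lambda$. So $H_0$ together with the diagonal map $f$ is again a good model. (One must check $H_0$ is locally compact: this follows because near the identity it agrees, via $f_1$, with a neighbourhood in $H_1$; this is exactly where condition (2) of the good-model definition is used.)

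Next I would verify the lifting property. Let $a$ be a group endomorphism of $\Gamma$ with $a(\Lambda)$ commensurable with $\Lambda$. Then $f \circ a \colon \Gamma \to H_0$ has the property that the image of $\Lambda$ is relatively compact — because $a(\Lambda) \subset F\Lambda$ for a finite set $F$, so $f(a(\Lambda)) \subset f(F)\,\overline{f(\Lambda)}$, a relatively compact set. Thus $f \circ a$ is a group homomorphism sending $\Lambda$ into a relatively compact set; by the universal property (i.e. since $\Gamma$ commensurates $\Lambda$, the closure of the image of $(f, f\circ a)\colon \Gamma \to H_0 \times H_0$ is again a good model, hence is one of the $H_i$, hence factors through $H_0$) there is a unique continuous homomorphism $\alpha \colon H_0 \to H_0$ with $\alpha \circ f = f \circ a$. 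Uniqueness of $\alpha$ is immediate from density of $f(\Gamma)$ in $H_0$.

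I expect the main obstacle to be making the "universal good model" construction rigorous without set-theoretic difficulties: the collection of all good models is a proper class, so I would instead bound the cardinality of the $H_i$ that can arise (any good model with dense image is a quotient of a fixed set — e.g. its underlying set has cardinality at most that of the completion of $\Gamma$ with respect to the uniformity generated by the preimages of identity neighbourhoods in \emph{some} good model, which is controlled by $|\Gamma|$), and take the product only over a representative \emph{set}. Alternatively, and perhaps more cleanly, one can avoid the universal construction entirely: the correct $H_0$ is the completion of $\Gamma$ with respect to the group topology $\tau$ whose identity neighbourhood basis is $\{a_1(\Lambda) \cap \cdots \cap a_k(\Lambda) : k \geq 1,\ a_j \text{ endomorphisms of }\Gamma \text{ commensurating }\Lambda\}$ — one checks using Lemma \ref{Lemma: Intersection of approximate subgroups} that finite intersections of commensurated approximate subgroups are again approximate subgroups, so this is a genuine group topology, it is precompact because each $a_j(\Lambda)$ is; then $H_0 := \widehat{\Gamma}^{\tau}$ with $f$ the completion map is a good model, and every commensurating endomorphism $a$ is $\tau$-continuous by construction (since $a^{-1}$ of a basic neighbourhood contains a basic neighbourhood), hence extends uniquely to $H_0$. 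This second route turns the existence of $\alpha$ into the statement that $a$ is uniformly continuous, which is the heart of the argument and the step requiring the most care.
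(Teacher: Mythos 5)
The paper does not prove this lemma; it is cited verbatim from \cite{machado2019goodmodels}, so there is no in-paper argument to compare against. On its own merits, your first route---the ``universal good model'' built as the closure $H_0$ of the diagonal image of $\Gamma$ in a product $\prod_i H_i$ over a representative set of good models with dense image---is sound, and is exactly the construction the label ``Bohr compactification'' is pointing at. Your verification of conditions (1) and (2) for $H_0$ is correct (Tychonoff for precompactness of $f(\Lambda)$; the pullback $\pi_1^{-1}(U_1)\cap H_0$ is an open neighbourhood of $e$ inside the compact $\overline{f(\Lambda)}$, giving local compactness and condition (2) simultaneously), your observation that $(f, f\circ a)$ inherits condition (2) through the first factor is the key point, and the cardinality-bound remark legitimately reduces the class of good models to a set. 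Composing the projection $H_0 \to \overline{(f,f\circ a)(\Gamma)}$ with the second coordinate and using density for uniqueness finishes the argument.

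Your second route has a genuine gap and should not be presented as an equivalent alternative. The family $\{a_1(\Lambda)\cap\cdots\cap a_k(\Lambda)\}$ does not obviously form a neighbourhood basis of a group topology: for the square-root axiom you would need, for each such $V$, some $W$ in the family with $W^2 \subset V$, and an approximate subgroup only gives $V^2 \subset FV$ for a finite $F$. Having a good model does produce commensurable approximate subgroups $\Lambda' \subset \Lambda$ with $\Lambda'^2\subset\Lambda$ (pull back a smaller neighbourhood in $H_1$), but such $\Lambda'$ are not of the form $a_1(\Lambda)\cap\cdots\cap a_k(\Lambda)$, so your basis is too narrow. The claimed $\tau$-continuity of $a$ is also unjustified: $a^{-1}\bigl(a_j(\Lambda)\bigr)$ is generally not of the form $b(\Lambda)$, and commensurability of $a(\Lambda)$ with $\Lambda$ controls images of $\Lambda$, not preimages. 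A working version of this route would take as basis the preimages $a^{-1}(V)$ with $V$ running over a Sanders-type descending chain of approximate subgroups commensurable with $\Lambda$, at which point it essentially reproduces the universal construction. Rely on the first approach.
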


\begin{proposition}[Prop 3.6, \cite{machado2019goodmodels}]\label{Proposition: Minimal commensurable approximate subgroup}
Let $\Lambda$ be an approximate subgroup of some group. Suppose that $\Lambda$ is laminar. Then there is an approximate subgroup $\Lambda'$ commensurable with $\Lambda$ and a good model $f: \langle \Lambda' \rangle \rightarrow H$ with target a connected Lie group and dense image. Moreover: 
\begin{enumerate}
\item if $\Lambda'' \subset \Lambda'$ is any approximate subgroup commensurable with $\Lambda'$, $\langle \Lambda'' \rangle$ has finite index in $\langle \Lambda' \rangle$;
\item we can choose $H$ without compact normal subgroup and such an $H$ is unique. 
\end{enumerate}
\end{proposition}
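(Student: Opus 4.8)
The plan is to produce $H$ by repeatedly simplifying the target of a good model, and then to read off properties (1) and (2) from the fact that the target ends up connected. Since $\Lambda$ is laminar, I first fix $\Lambda_0$ commensurable with $\Lambda$ and a good model $g_0\colon\langle\Lambda_0\rangle\to G_0$, and replace $G_0$ by $\overline{g_0(\langle\Lambda_0\rangle)}$ so that $g_0$ has dense image. The engine is two reductions, each keeping us in the class ``good model with dense image of an approximate subgroup commensurable with $\Lambda$''. Reduction (a): \emph{quotient the target by a compact normal subgroup} $K\trianglelefteq G_0$; then $q\circ g_0$ is again such a good model, the only non-formal point being that $g_0^{-1}(C)\cap\langle\Lambda_0\rangle$ is covered by finitely many translates of $\Lambda_0^2$ for every compact $C$ — this follows from the ``limit argument'': if $g_0(\gamma)\in\overline{g_0(\Lambda_0)}$, choose $\lambda_n\in\Lambda_0$ with $g_0(\lambda_n)\to g_0(\gamma)$, so eventually $\gamma\lambda_n^{-1}\in g_0^{-1}(U_0)\subseteq\Lambda_0$ and hence $\gamma\in\Lambda_0^2$ (compare \cite[Lem.~3.3]{machado2019goodmodels}). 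Reduction (b): \emph{pass to the preimage of an open subgroup} $L\le G_0$; then $\Lambda_0':=\Lambda_0^2\cap g_0^{-1}(L)$ is commensurable with $\Lambda_0$ because $g_0(\Lambda_0)$, being relatively compact, meets only finitely many cosets of $L$, and $g_0$ restricted to $\langle\Lambda_0'\rangle$ with target $\overline{g_0(\langle\Lambda_0'\rangle)}\le L$ is a good model with dense image.

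\textbf{Existence.} By the Gleason--Yamabe theorem, $G_0$ has an open subgroup $L$ possessing arbitrarily small compact normal subgroups with Lie quotient; applying (b) to $L$ and then (a) to such a compact normal subgroup, I may assume $G_0$ is a Lie group, and applying (b) to the identity component (open in a Lie group) I may assume $G_0=:H$ is a connected Lie group. After re-closing the image at each stage, this yields $\Lambda'$ commensurable with $\Lambda$ together with a good model $f\colon\langle\Lambda'\rangle\to H$ with dense image, $H$ connected Lie.

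\textbf{Part (1).} Let $\Lambda''\subseteq\Lambda'$ be commensurable with $\Lambda'$ and write $\Lambda'\subseteq F\Lambda''$ with $F$ finite; each relevant $f\in F$ has the form $\lambda'\mu^{-1}$ with $\lambda'\in\Lambda'$, $\mu\in\Lambda''$, so I may take $F\subseteq\langle\Lambda'\rangle$, whence $\Lambda'$ meets at most $|F|$ left cosets of $\langle\Lambda''\rangle$. On the other hand $\overline{f(\Lambda'')}$ has non-empty interior: the good-model property together with density of $f(\langle\Lambda'\rangle)$ gives $\overline{f(\Lambda')}\supseteq U_0$, and $\overline{f(\Lambda')}$ is covered by finitely many translates of $\overline{f(\Lambda'')}$. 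Since $H$ is connected, the open subgroup generated by that interior is all of $H$, so the closed subgroup $\overline{f(\langle\Lambda''\rangle)}$ (which contains it) equals $H$: thus $f(\langle\Lambda''\rangle)$ is dense. Now with $U_0$ the good-model neighbourhood, every coset $\gamma\langle\Lambda''\rangle$ with $\gamma\in\langle\Lambda'\rangle$ contains, by density of $f(\langle\Lambda''\rangle)$, some $\gamma\delta$ with $f(\gamma\delta)\in U_0$, hence $\gamma\delta\in f^{-1}(U_0)\subseteq\Lambda'$. So every coset of $\langle\Lambda''\rangle$ in $\langle\Lambda'\rangle$ meets $\Lambda'$, and there are at most $|F|$ of them.

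\textbf{Part (2) and the main obstacle.} A connected Lie group has a largest compact normal subgroup $K$; quotienting by $K$ via reduction (a) makes $H$ have no non-trivial compact normal subgroup while remaining connected Lie with dense image. For uniqueness, given two such good models $f_a\colon\langle\Lambda_a'\rangle\to H_a$ and $f_b\colon\langle\Lambda_b'\rangle\to H_b$ (with $\Lambda_a',\Lambda_b'$ commensurable with $\Lambda$, hence with each other), set $\Lambda_c:=(\Lambda_a')^2\cap(\Lambda_b')^2$; it is commensurable with both, and by Part (1) (applied with $(\Lambda_a')^2$, resp.\ $(\Lambda_b')^2$, in place of $\Lambda'$) the group $\langle\Lambda_c\rangle$ has finite index in $\langle\Lambda_a'\rangle$ and in $\langle\Lambda_b'\rangle$; a finite-index subgroup of a dense subgroup of a connected group is dense, so $f_a,f_b$ restrict to good models $\langle\Lambda_c\rangle\to H_a$, $\langle\Lambda_c\rangle\to H_b$ with dense image. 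Let $L\subseteq H_a\times H_b$ be the closure of the diagonal image of $\langle\Lambda_c\rangle$: both projections have dense image, and the kernel of $L\to H_a$ is relatively compact (if $f_a(\gamma_n)\to e$ then $\gamma_n\in\Lambda_a'$ eventually, so $f_b(\gamma_n)$ stays in the relatively compact set $f_b(\Lambda_a'\cap\langle\Lambda_c\rangle)$), hence compact, hence — being normal in $L$ and projecting onto a dense subgroup of $H_b$ — a compact normal subgroup of $H_b$, so trivial; symmetrically for $L\to H_b$. Thus both projections are continuous bijections of $\sigma$-compact locally compact groups, hence isomorphisms, and $H_a\cong L\cong H_b$. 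I expect the genuine work to lie in the existence part — invoking the Gleason--Yamabe structure theory and, above all, verifying reduction (a), where the limit argument and the bookkeeping converting $g_0^{-1}(\mathrm{compact})$ into finitely many translates of $\Lambda_0^2$ must actually be carried out; once $H$ is connected, Parts (1) and (2) are soft.
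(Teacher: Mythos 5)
Your overall strategy — reduce to a connected Lie target via Gleason--Yamabe, then read off (1) and (2) from connectedness, with uniqueness via the fibre product $L\subset H_a\times H_b$ — is sound and is the standard route to Lie models of approximate subgroups. Reductions (a) and (b), the covering argument for $g_0^{-1}(\text{compact})$, the Baire-category step giving non-empty interior to $\overline{f(\Lambda'')}$, and the two directions of the index bound in Part (1) are all correct. The existence of a largest compact normal subgroup of a connected Lie group (Iwasawa) is also a legitimate input for the first half of (2).

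There is, however, a genuine gap in the uniqueness argument. You establish that $p_a\colon L\to H_a$ and $p_b\colon L\to H_b$ are injective (trivial kernel) with \emph{dense} image, and then assert they are ``continuous bijections''. Dense image is not surjectivity, and the open mapping theorem for $\sigma$-compact locally compact groups requires surjectivity as a hypothesis, so the isomorphism $H_a\cong L\cong H_b$ does not follow as written. To close the gap you should show $p_a|_L$ is \emph{proper}, hence has closed (and therefore, by density, full) image: for $C\subset H_a$ compact, cover $C$ by finitely many translates of the good-model neighbourhood $U_0^a$, so $f_a^{-1}(C)\cap\langle\Lambda_c\rangle\subset F\Lambda_a'$ with $F\subset\langle\Lambda_c\rangle$ finite; then, exactly as in your kernel computation, $f_b\bigl(F\Lambda_a'\cap\langle\Lambda_c\rangle\bigr)$ is relatively compact, so $p_a^{-1}(C)\cap L$ is contained in $C\times(\text{compact})$ and, being closed, is compact. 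This makes $p_a(L)=H_a$ and $p_b(L)=H_b$; the open mapping theorem then finishes the argument. The rest of the proof stands.
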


The relevance of good models to the study of approximate lattices is explained by: 

\begin{proposition}[\S 3.5, \cite{machado2019goodmodels}]\label{Proposition: Equivalence good models and model sets}
Let $\Lambda$ be a uniformly discrete approximate subgroup in a locally compact group $G$. Let $\Gamma \subset G$ be a subgroup of $G$ containing $\Lambda$ and commensurating it.
\begin{enumerate}
\item If $(\Lambda, \Gamma)$ has a good model $f: \Gamma \rightarrow H$, then $$\Gamma_f:=\{(\gamma, f(\gamma)) \in G \times H : \gamma \in \Gamma\}$$ 
is a discrete subgroup of $G \times H$;
\item If, moreover, $\Lambda$ is an approximate lattice and $f$ has dense image, then $\Gamma_f$ is a lattice; 
\item If $\Lambda$ is an approximate lattice, then $\Lambda$ is laminar if and only if $\Lambda$ is commensurable with a model set.
\end{enumerate}
\end{proposition}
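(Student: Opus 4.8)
The plan is to establish the three assertions in turn: (1) and (2) are the substantive points, and (3) is then a formal consequence of them combined with the machinery of good models. Throughout we freely use that a subset commensurable with an approximate lattice is itself an approximate lattice, and the standard permanence properties of model sets \cite{bjorklund2016approximate}.

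For (1), observe first that $\Gamma_f$ is a subgroup of $G\times H$ because $f$ is a homomorphism, so it suffices to find a neighbourhood of the identity meeting $\Gamma_f$ trivially. Let $V\subset G$ be a neighbourhood of $e$ with $\Lambda\cap V=\{e\}$, which exists since $\Lambda$ is discrete, and let $U\subset H$ be the neighbourhood of $e$ supplied by the good model, so $f^{-1}(U)\subset\Lambda$. If $(\gamma,f(\gamma))\in\Gamma_f\cap(V\times U)$, then $f(\gamma)\in U$ forces $\gamma\in f^{-1}(U)\subset\Lambda$, hence $\gamma\in\Lambda\cap V=\{e\}$; thus $\Gamma_f\cap(V\times U)=\{(e,e)\}$ and $\Gamma_f$ is discrete.

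For (2) I additionally need a Borel subset of $G\times H$ of finite Haar measure meeting every coset of $\Gamma_f$. Since $\Lambda$ is an approximate lattice there is $\mathcal F\subset G$ of finite Haar measure with $\Lambda\mathcal F=G$; fix also a relatively compact symmetric neighbourhood $V_0$ of $e$ in $H$ and set $W:=\overline{f(\Lambda)}\,\overline{V_0}$, which is compact and hence of finite Haar measure. Given $(g,h)\in G\times H$, use density of $f(\Gamma)$ (as $f$ has dense image) to choose $\gamma_0\in\Gamma$ with $f(\gamma_0)^{-1}h\in V_0$, and then write $\gamma_0^{-1}g=\lambda x$ with $\lambda\in\Lambda$, $x\in\mathcal F$. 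Then $\gamma:=\gamma_0\lambda\in\Gamma$ satisfies $\gamma^{-1}g=x\in\mathcal F$ and, using $\Lambda=\Lambda^{-1}$, $f(\gamma)^{-1}h=f(\lambda)^{-1}f(\gamma_0)^{-1}h\in f(\Lambda)V_0\subset W$. Hence $\Gamma_f(\mathcal F\times W)=G\times H$, so $\Gamma_f$ is a lattice. This is the main point of the proof: the naive choice $\mathcal F\times\overline{f(\Lambda)}$ fails, and one must first correct the $H$-coordinate by an element of $\Gamma$, which is precisely where density of $f(\Gamma)$ enters essentially.

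For the forward implication of (3), let $\Lambda$ be laminar. By Proposition \ref{Proposition: Minimal commensurable approximate subgroup} there is $\Lambda'$ commensurable with $\Lambda$ and a good model $f\colon\langle\Lambda'\rangle\to H$ with dense image; $\Lambda'$ is again an approximate lattice, so by (1)--(2) the graph $\Gamma'_f\subset G\times H$ is a lattice. Take $W_0:=\overline{f(\Lambda')}$, which is compact, symmetric, and a neighbourhood of $e$ since $f^{-1}(U)\subset\Lambda'$ for some open $U\ni e$ and $f(\langle\Lambda'\rangle)$ is dense. Put $M:=p_G\bigl(\Gamma'_f\cap(G\times W_0)\bigr)=\{\gamma\in\langle\Lambda'\rangle:f(\gamma)\in W_0\}$, a model set for the scheme $(G,H,\Gamma'_f)$. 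Plainly $\Lambda'\subset M$; conversely, covering the compact set $\overline{f(\Lambda')}$ by finitely many translates $f(\gamma_i)U$ of $U$ (perturbing the $f(\gamma_i)$ into the dense set $f(\langle\Lambda'\rangle)$) gives $M\subset f^{-1}\bigl(\overline{f(\Lambda')}\bigr)\subset\bigcup_i\gamma_i\Lambda'$, and passing to inverses upgrades this to commensurability. Thus $\Lambda$ is commensurable with the model set $M$.

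For the reverse implication, suppose $\Lambda$ is commensurable with a model set $M=p_G(\Gamma_0\cap(G\times W_0))$; as laminarity is a commensurability invariant it suffices to show $M$ is laminar. After the standard reduction to non-degenerate cut-and-project data — replacing $H$ by $\overline{p_H(\Gamma_0)}$, which leaves $M$ unchanged, and passing to a quotient making $p_G$ injective on $\Gamma_0$, which alters $M$ only up to commensurability — we may assume $p_G|_{\Gamma_0}$ is injective with dense $p_H$-image. Then $\Gamma:=p_G(\Gamma_0)$ carries the well-defined homomorphism $f\colon p_G(\gamma_0)\mapsto p_H(\gamma_0)$, which has dense image, sends $M$ into the relatively compact set $W_0$, and satisfies $f^{-1}(U_0)\subset M$ with $U_0$ the interior of $W_0$; moreover $\Gamma$ commensurates $M$ because conjugating $M$ replaces $W_0$ by a relatively compact neighbourhood of $e$ and model sets from a fixed lattice with windows trapped between a common neighbourhood of $e$ and a common compact set are commensurable. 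Hence $f$ is a good model of $(M,\Gamma)$, so $M$, and therefore $\Lambda$, is laminar. The only ingredient requiring care beyond these routine verifications is this last reduction to non-degenerate data, which I would import from \cite{bjorklund2016approximate} rather than reprove.
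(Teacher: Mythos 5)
The paper does not prove this proposition: it is imported verbatim from \S 3.5 of \cite{machado2019goodmodels}, so there is no in-text argument to compare your proof against. Judged on its own terms, your proof is correct and follows the natural route one would expect the reference to take.

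A few remarks. In (1), the use of a neighbourhood $V$ with $\Lambda\cap V=\{e\}$ together with the neighbourhood $U$ furnished by the good model is exactly right, and the intersection $\Gamma_f\cap(V\times U)=\{(e,e)\}$ is clean. In (2), you correctly identify that the product $\mathcal F\times\overline{f(\Lambda)}$ does not suffice on its own and that the essential point is to first bring the $H$-coordinate within $V_0$ of a value achieved by $f(\Gamma)$, then correct the $G$-coordinate using $\Lambda\mathcal F=G$; the symmetry $\Lambda=\Lambda^{-1}$ is used exactly where it is needed to ensure $f(\gamma)^{-1}h\in f(\Lambda)V_0$. The forward direction of (3) rightly uses Proposition \ref{Proposition: Minimal commensurable approximate subgroup} to obtain $\Lambda'$, $f$, and $H$ with the needed properties, then applies (1)--(2); the observation that $\overline{f(\Lambda')}$ is a (compact, symmetric) neighbourhood of $e$ because $U\cap f(\langle\Lambda'\rangle)\subset f(\Lambda')$ and $f(\langle\Lambda'\rangle)$ is dense is the small point most people elide. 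The covering argument giving $M\subset\bigcup_i\gamma_i\Lambda'$ needs the standard perturbation step (cover $\overline{f(\Lambda')}$ by finitely many $h_iU'$ with $U'U'\subset U$ and nudge each $h_i$ into $f(\langle\Lambda'\rangle)$), which you gesture at with ``perturbing''; it works. In the reverse direction the one genuinely delicate step is the reduction to a cut-and-project scheme with $p_G|_{\Gamma_0}$ injective and $p_H(\Gamma_0)$ dense, and you are right both that it is the step requiring care (the quotient kills a discrete normal subgroup of $\overline{p_H(\Gamma_0)}$ and one must check that the induced projection still gives a lattice and a commensurable model set) and that it is cleanest to import it from \cite{bjorklund2016approximate} rather than reprove. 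The remaining claim -- that two model sets from the same irredundant scheme with windows both relatively compact symmetric neighbourhoods of $e$ are commensurable -- is exactly where density of $p_H(\Gamma_0)$ is used, as you note. No gaps that I can see.
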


\subsection{Approximate lattices}
We state here an elementary fact concerning approximate lattices that we have used repeatedly:
 \begin{corollary}[ Lemma A.4, \cite{hrushovski2020beyond}]\label{Corollary: Towers of star-approximate subsets are commensurable}
  If $\Lambda_1 \subset \Lambda_2$ are two approximate lattices in a locally compact second countable group $G$, then $\Lambda_1$ and $\Lambda_2$ are commensurable.
 \end{corollary}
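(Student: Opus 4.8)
The plan is to find a \emph{finite} set $D$ with $\Lambda_2 \subseteq \Lambda_1 D$. This suffices: since $\Lambda_1 \subseteq \Lambda_2$ is trivial and each $\Lambda_i$ is symmetric, taking inverses also gives $\Lambda_2 \subseteq D^{-1}\Lambda_1$, so $\Lambda_1$ and $\Lambda_2$ are commensurable with $F := D \cup D^{-1} \cup \{e\}$. To build $D$, fix a Borel set $\mathcal{F} \subseteq G$ of finite Haar measure with $\Lambda_1 \mathcal{F} = G$ (available because $\Lambda_1$ is an approximate lattice). For $\lambda \in \Lambda_2 \subseteq G = \Lambda_1\mathcal{F}$ write $\lambda = \mu x$ with $\mu \in \Lambda_1$, $x \in \mathcal{F}$; then $x = \mu^{-1}\lambda \in \Lambda_1\Lambda_2 \subseteq \Lambda_2^2$ (using $\Lambda_1 \subseteq \Lambda_2 = \Lambda_2^{-1}$), so $x \in D := \Lambda_2^2 \cap \mathcal{F}$ and $\lambda \in \Lambda_1 D$. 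Hence $\Lambda_2 \subseteq \Lambda_1 D$, and everything reduces to showing $D$ is finite.

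For finiteness I would use that $\Lambda_2$, being a discrete approximate subgroup of a locally compact group, is uniformly discrete, and so is $\Lambda_2^2$ (a finite union of translates of $\Lambda_2$). Fix a symmetric precompact open neighbourhood $W$ of $e$ with the translates $\{\gamma W : \gamma \in \Lambda_2^2\}$ pairwise disjoint. Disjointness and left–invariance of Haar measure give $\#(\Lambda_2^2 \cap A)\,\mu(W) \le \mu(AW)$ for every measurable $A$, hence $\#D \le \mu(\mathcal{F}W)/\mu(W)$. So it is enough to choose $\mathcal{F}$ to be \emph{$W$-tame}, meaning $\mu(\mathcal{F}W) < \infty$.

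The existence of a $W$-tame fundamental set is the crux, and the only step that is not pure bookkeeping with approximate–subgroup identities: a bare finite–measure $\mathcal{F}$ with $\Lambda_1\mathcal{F} = G$ may fail to be tame (it can differ from a tame one by a spread–out null set), but a genuine approximate lattice always admits a tame one. The plan is to exploit that $\Lambda_1$, as an approximate lattice, carries a $G$-invariant probability measure $m$ on its hull $\Omega_{\Lambda_1}$ of positive and finite intensity, together with the fact that $G$ is then unimodular; from $m$ one extracts a $W$-tame $\mathcal{F}$ by a measurable–selection, Siegel–domain–type argument, building $\mathcal{F}$ from a subset of $\Omega_{\Lambda_1}$ of finite $m$-measure transverse to the $G$-action. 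Equivalently, one may bypass tameness altogether: if $D$ were infinite then, using $\Lambda_1^2 \subseteq E\Lambda_1$ with $E$ finite to pass to a sequence $\gamma_1,\gamma_2,\dots \in \Lambda_2$ with the sets $\gamma_j\Lambda_1$ pairwise disjoint and contained in $\Lambda_2^2$, comparing the finite positive densities of $\Lambda_1$ and of $\Lambda_2^2$ — again read off from the invariant measures on their hulls — forces $n\,d(\Lambda_1) \le d(\Lambda_2^2)$ for all $n$, which is absurd. Either way the non–formal content is exactly this manipulation of the hull invariant measures, which is precisely the input that makes ``approximate lattice'' a robust notion; the rest is the elementary covering identity $\Lambda_2 \subseteq \Lambda_1(\Lambda_2^2\cap\mathcal{F})$ above.
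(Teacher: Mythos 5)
Your reduction to finding a finite $D$ with $\Lambda_2 \subseteq \Lambda_1 D$, and the covering identity $\Lambda_2 \subseteq \Lambda_1\left(\Lambda_2^2 \cap \mathcal{F}\right)$, are correct, and you have correctly located the crux: a set $\mathcal{F}$ of finite Haar measure with $\Lambda_1\mathcal{F}=G$ can meet $\Lambda_2^2$ infinitely often (your own $\mathbb{Z}$-type example shows this), so some additional input is needed. The gap is that neither of your two ways of supplying that input is actually available here. Both the ``$W$-tame fundamental domain'' and the ``compare densities $n\,d(\Lambda_1)\le d(\Lambda_2^2)$'' routes are made to rest on a $G$-invariant probability measure on the hull $\Omega_{\Lambda_1}$ of positive finite intensity. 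For a general approximate lattice in a general (possibly non-amenable) locally compact second countable group, the existence of such a measure is not a formality: it is essentially the ``strong approximate lattice'' property of Bj\"{o}rklund--Hartnick, a substantial theorem whose proofs themselves use elementary commensurability lemmas of exactly the present kind. So as written the argument is either circular or imports machinery that the statement is not entitled to. (A second, smaller issue: your alternative route starts from ``if $D$ were infinite,'' but infiniteness of $D$ for one particular $\mathcal{F}$ does not yield infinitely many $\Lambda_1^2$-separated points of $\Lambda_2$ --- again your $\mathbb{Z}$ example --- so the contradiction has to be run from the negation of the conclusion, not from $|D|=\infty$.)

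The finiteness step can be closed elementarily, using only $\mathcal{F}$ itself and left-invariance of Haar measure; the paper does not reprove the statement (it cites Hrushovski's Lemma A.4), and this is the standard argument. Choose $\lambda_1,\dots,\lambda_n\in\Lambda_2$ greedily with $\lambda_i\lambda_j^{-1}\notin\Lambda_1^2$ for $i\neq j$, i.e.\ with the sets $\Lambda_1\lambda_i$ pairwise disjoint; if this process terminates after finitely many steps then $\Lambda_2\subset\Lambda_1^2\{\lambda_1,\dots,\lambda_n\}\subset\Lambda_1 F$ for a finite $F$, and symmetry finishes the proof as in your first paragraph. Pick a symmetric relatively compact open $W$ with $\Lambda_2^4\cap W^2=\{e\}$ (uniform discreteness of $\Lambda_2$ passes to $\Lambda_2^4$ since $\Lambda_2^4\subset F'\Lambda_2$). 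Since $\lambda_iW\subset G=\Lambda_1\mathcal{F}$ and $\Lambda_1$ is countable, $\mu(W)=\mu(\lambda_iW)\le\sum_{\mu\in\Lambda_1}\mu\!\left(\mathcal{F}\cap\mu\lambda_iW\right)$. If $x\in\mu\lambda_iW\cap\mu'\lambda_jW$ then $\lambda_j^{-1}\mu'^{-1}\mu\lambda_i\in\Lambda_2^4\cap W^2=\{e\}$, forcing $\mu\lambda_i=\mu'\lambda_j$, hence $\lambda_i\lambda_j^{-1}\in\Lambda_1^2$, hence $i=j$ and $\mu=\mu'$: the family $\{\mu\lambda_iW\}$ has multiplicity one. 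Summing over $i$ gives $n\,\mu(W)\le\mu(\mathcal{F})$, so $n\le\mu(\mathcal{F})/\mu(W)$ and the greedy process terminates. This is precisely your density comparison, but carried out against the fixed finite-measure set $\mathcal{F}$ rather than against a hull measure, which is what makes it legitimate at this level of generality. I recommend replacing the hull-measure appeals with this counting.
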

 
 \subsection{Approximate subgroups acting on other approximate subgroups}
 
 We conclude this appendix with a simple application of Arzela--Ascoli which already has already striking consequences for approximate subgroups. 
 
\begin{lemma}\label{Lemma: Bounded approximate subgroups of of automorphisms are relatively compact}
Let $G$ be a locally compact group and $W$ be a neighbourhood of the identity. Let $\mathcal{A}$ be an approximate subgroup of the group $\Aut(G)$ of automorphisms of $G$ such that $\mathcal{A}\cdot W$ is relatively compact and $\overline{\langle \mathcal{A} \rangle}$ (in the Braconnier topology) contains the inner automorphisms. Then there is $C$ a compact normal subgroup stable under $\mathcal{A}$ such that the image of $\mathcal{A}$ in $\Aut(G/C)$ is relatively compact.
\end{lemma}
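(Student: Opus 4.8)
**Proof plan for Lemma A.12 (bounded approximate subgroups of automorphisms are relatively compact modulo a compact subgroup).**

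The plan is to mimic the classical argument showing that an equicontinuous group of automorphisms of a locally compact group is relatively compact after quotienting by a maximal compact normal subgroup, but to replace "group" by "approximate subgroup" using the hypothesis that $\mathcal{A}\cdot W$ is relatively compact and that $\overline{\langle \mathcal{A}\rangle}$ contains all inner automorphisms. First I would fix a relatively compact symmetric open neighbourhood $W$ of the identity and, after shrinking, assume $\mathcal{A}\cdot W$ has compact closure $K_0$. The key consequence of $\mathcal{A}=\mathcal{A}^{-1}$ and $\mathcal{A}^2\subset F\mathcal{A}$ is that the set $\bigcup_{\alpha\in\langle\mathcal{A}\rangle_{\mathrm{bdd}}}\alpha(W)$ generated by iterating $\mathcal{A}$ a bounded number of times stays in a fixed compact set; more usefully, I would consider the closed subgroup $C:=\overline{\bigcap_{\alpha\in\langle\mathcal{A}\rangle}\alpha(V)}$ where $V$ ranges over a neighbourhood basis, or — cleaner — take $C$ to be the union of all relatively compact subgroups of $G$ that are normalised by $\langle\mathcal{A}\rangle$; one shows this is a single compact normal subgroup using that inner automorphisms lie in $\overline{\langle\mathcal{A}\rangle}$, hence $C$ is characteristic-like and in particular normal in $G$. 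The nontrivial point is to produce $C$ so that $G/C$ has "no small subgroups in the direction of $\mathcal{A}$", i.e. so that the $\mathcal{A}$-action on $G/C$ is equicontinuous at the identity with a \emph{genuinely} (not merely approximately) invariant neighbourhood.

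Concretely, here is the order of steps. (1) Using relative compactness of $\mathcal{A}\cdot W$, apply Arzelà–Ascoli to the family $\{\alpha|_{W^{n}}:\alpha\in\mathcal{A}^{k}\}$ for each fixed $k,n$: since $\mathcal{A}^k\subset F_k\mathcal{A}$, each such family has compact closure in $\mathrm{Map}(W^n,G)$ with the uniform topology, because $\mathcal{A}^k\cdot W^n$ is relatively compact and the maps are equicontinuous (being automorphisms restricted to a fixed compact set, equicontinuity follows from relative compactness of the image together with the group structure). (2) Form the closure $\overline{\mathcal{A}}$ inside $\Aut(G)$ with the Braconnier (= compact-open on $G$ \emph{and} on $G$ via inverse) topology; step (1) shows that $\overline{\langle\mathcal{A}\rangle_{\mathrm{bdd}}}\cdot W$ is relatively compact, and I would argue that the closed subgroup $H:=\overline{\langle\mathcal{A}\rangle}\subset\Aut(G)$ then has the property that $H\cdot W$ is relatively compact. (3) Let $C$ be the closure of the subgroup generated by all $H$-orbits that are relatively compact — equivalently, the largest $H$-invariant compact subgroup of $G$; it exists and is normal in $G$ precisely because $H\supset\mathrm{Inn}(G)$, so $H$-invariance forces $G$-normality, and it is compact by a standard maximality argument (the union of an increasing chain of compact $H$-invariant subgroups, bounded inside $H\cdot W$-translates, is compact). (4) Pass to $G/C$: by maximality of $C$, the $H$-action on $G/C$ has no nontrivial relatively compact invariant subgroup, and combined with $H\cdot \overline{W}$ relatively compact this forces $H$'s image in $\Aut(G/C)$ to be compact — here one uses that a closed subgroup of $\Aut(G/C)$ which is equicontinuous at the identity and has no nontrivial compact invariant subgroup in $G/C$ is itself compact (a van Dantzig / Montgomery–Zippin style fact, or an adaptation of \cite{machado2019goodmodels}). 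Since $\mathcal{A}\subset H$, its image in $\Aut(G/C)$ is relatively compact, and $C$ is stable under $\mathcal{A}$ as $C$ is $H$-invariant.

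The main obstacle I anticipate is step (3)–(4): showing that the \emph{maximal} compact $H$-invariant subgroup $C$ of $G$ exists and is well-behaved, and then extracting genuine (not approximate) compactness of the image in $\Aut(G/C)$. The delicate issue is that $\mathcal{A}$ is only an approximate subgroup, so a priori $\langle\mathcal{A}\rangle\cdot W$ need not be relatively compact even though $\mathcal{A}^k\cdot W$ is for each fixed $k$; this is why passing to $H=\overline{\langle\mathcal{A}\rangle}$ and working with the honest group $H$ is essential, and one must check carefully that the covering condition $\mathcal{A}^2\subset F\mathcal{A}$ plus relative compactness of $\mathcal{A}\cdot W$ does propagate to control $H$ — this likely uses that $F$ acts on the fixed compact set $K_0$ and an induction showing $\mathcal{A}^k\cdot W\subset F^{k-1}K_0$ stays bounded, so that \emph{every} element of $\langle\mathcal{A}\rangle$, while possibly moving $W$ far, still preserves the single subgroup $C$. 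Once $C$ is in hand the quotient argument is routine, following the locally compact group theory already invoked in \cite{machado2019goodmodels}.
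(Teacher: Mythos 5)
Your step (2) contains the central gap, and it is not a fixable technicality. You claim that $H:=\overline{\langle\mathcal A\rangle}$ satisfies the property that $H\cdot W$ is relatively compact, and propose to justify it by an induction of the form $\mathcal A^k\cdot W\subset F^{k-1}K_0$. But $F^{k-1}$ is a growing family of finite subsets of $\Aut(G)$, and $\bigcup_k F^{k-1}K_0$ is in general unbounded, so this induction says nothing about $\langle\mathcal A\rangle\cdot W$. Concretely, take $G=\mathbb R$, $W=[-1,1]$, $\mathcal A=\{x\mapsto tx : t\in[\tfrac12,2]\}$: then $\mathcal A$ is an approximate subgroup of $\Aut(\mathbb R)$, $\mathcal A\cdot W=[-2,2]$ is compact, $\overline{\langle\mathcal A\rangle}$ trivially contains the inner automorphisms (which are trivial), yet $\langle\mathcal A\rangle\cdot W=\mathbb R$ and there is no hope that $H\cdot W$ is bounded. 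Once step (2) fails, steps (3)–(4) collapse as well: you use $H\cdot W$ relatively compact both to bound the increasing chain of compact $H$-invariant subgroups and to extract compactness of the image in $\Aut(G/C)$. Note also that, independently, a ``largest compact $H$-invariant subgroup'' of a locally compact group need not exist without an a priori bound (think of an infinite discrete torsion group), so step (3) would need a genuine argument even if step (2) were repaired.

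The paper takes a different route precisely to avoid relying on any control of $\langle\mathcal A\rangle\cdot W$. Instead of looking for a maximal compact $H$-invariant subgroup, it builds, for each neighbourhood $W'$ and finite $\mathcal F\subset\langle\mathcal A\rangle$, the compact set $\Omega(W',\mathcal F)=\bigcap_{\alpha\in\mathcal F\mathcal A}\alpha\cdot\overline{\mathcal A\cdot W'}$, and sets $X$ to be the intersection over all $(W',\mathcal F)$. This $X$ is compact, and its elements have the crucial feature that all their powers and their whole conjugacy class (under $G$) remain inside the single compact set $\overline{\mathcal A\cdot W}$. That is exactly the hypothesis needed to invoke a theorem of Grosser and Moskowitz, which produces a compact subgroup $C\supset X$ invariant under all automorphisms in $\overline{\langle\mathcal A\rangle}$ (hence normal in $G$, since the inner automorphisms lie there). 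Equicontinuity of $\mathcal A$ is then established \emph{only modulo $C$}, via the covering relation $\mathcal A\cdot\Omega(W',\mathcal F'\mathcal F)\subset\Omega(W',\mathcal F)$, after which Arzel\`a--Ascoli gives relative compactness of the image of $\mathcal A$ in $\Aut(G/C)$. The key idea you are missing is to isolate the ``boundedly conjugated, bounded power'' elements and use Grosser--Moskowitz; this replaces the nonexistent maximal compact $H$-invariant subgroup and sidesteps any need for $\langle\mathcal A\rangle\cdot W$ to be controlled.
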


\begin{proof}
Write $\tilde{W}:=\mathcal{A}\cdot W$. For every neighbourhood of the identity $W' \subset W$ and for all finite subsets $\mathcal{F} \subset \langle \mathcal{A} \rangle$ define the neighbourhood of the identity 
$$\Omega(W', \mathcal{F}):= \bigcap_{\alpha \in \mathcal{F}\mathcal{A}} \alpha\cdot \left( \overline{\mathcal{A} \cdot W'}\right).$$  
We know that $\Omega(W', \mathcal{F})$ is compact. Moreover, if $W''^2 \subset W'$, then for any $g \in \Omega(W'', \mathcal{F})$ we have $g^2 \in \Omega(W', \mathcal{F})$. Finally, we have 
$$\left(\mathcal{F}\mathcal{A}\right)^{-1} \cdot \Omega(W', \mathcal{F}) \subset \tilde{W}.$$

Choose now a neighbourhood basis $\mathcal{N}$ at the identity and define 
$$X := \bigcap_{W' \in \mathcal{N}, \mathcal{F} \subset \langle \mathcal{A} \rangle \text{ finite}} \Omega(W', \mathcal{F}).$$  The subset $X$ is compact, and every element $g \in X$ has its conjugacy class and all its powers contained in $\tilde{W}$. According to \cite[Th. 3.11]{MR284541}, $X$ must therefore be contained in a compact subgroup $C$ invariant under all automorphisms in the closure of $\langle \mathcal{A} \rangle$ - and in particular all inner automorphisms. One readily checks by a compactness argument that the image of the subsets $\Omega(W', \mathcal{F})$ in $G/C$ must generate a neighbourhood basis. But, if $\mathcal{F}'$ is finite and such that $\mathcal{A}\mathcal{F}\mathcal{A} \subset \mathcal{F}'\mathcal{F}\mathcal{A}$, then $\mathcal{A}\cdot\Omega(W', \mathcal{F}'\mathcal{F}) \subset \Omega(W',\mathcal{F})$. So the family $\mathcal{A}$ is equicontinuous. By the Arzela--Ascoli theorem (see e.g. the proof of \cite[Prop. I.7]{MR2739075}), we deduce that $\mathcal{A}$ is relatively compact in $\Aut(G/C)$.
\end{proof}

The main purpose of Lemma \ref{Lemma: Bounded approximate subgroups of of automorphisms are relatively compact} is to upgrade some notion of normalcy. 
\begin{corollary}\label{Corollary: Bounded approximate subgroups of of automorphisms are relatively compact}
Let $\Lambda$ be an approximate subgroup generating a group $\Gamma$. Let $\Lambda_N$ be another approximate subgroup of $\Gamma$ that has a good model. Suppose that $\Lambda_N^{\Lambda}=\bigcup_{\lambda \in \Lambda}\lambda\Lambda_N\lambda^{-1}$ is commensurable with $\Lambda_N$. Then there is $\Lambda'$ commensurable with $\Lambda$ that normalises a subgroup $N$ containing $\langle \Lambda_N \rangle$ as a finite index subgroup. 
\end{corollary}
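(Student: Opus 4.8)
The plan is to feed the data of Corollary~\ref{Corollary: Bounded approximate subgroups of of automorphisms are relatively compact} into Lemma~\ref{Lemma: Bounded approximate subgroups of of automorphisms are relatively compact} applied to the locally compact group $H$ coming from a good model of $\Lambda_N$, and then to transport the resulting compact-modulo-normality statement back to $\Gamma$. First I would fix a good model $f:\langle\Lambda_N\rangle\to H$ of $\Lambda_N$; by Proposition~\ref{Proposition: Minimal commensurable approximate subgroup} we may pass to a commensurable $\Lambda_N'$ so that $H$ is a connected Lie group with dense image, and it is harmless to assume $\Comm_{\langle\Lambda_N\rangle}(\Lambda_N')=\langle\Lambda_N\rangle$ so that Lemma~\ref{Lemma: Bohr compactification and abstract automorphisms} applies. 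The conjugation action of $\Lambda$ on $\langle\Lambda_N\rangle$ consists of endomorphisms that commensurate $\Lambda_N'$ (since $\Lambda_N^{\Lambda}$ is commensurable with $\Lambda_N$, hence with $\Lambda_N'$), so by Lemma~\ref{Lemma: Bohr compactification and abstract automorphisms} each $\lambda\in\Lambda$ induces a continuous endomorphism $\alpha_\lambda$ of $H_0$ (the canonical good model), and $\lambda\mapsto\alpha_\lambda$ is multiplicative. Thus $\mathcal{A}:=\{\alpha_\lambda:\lambda\in\Lambda\}$ is an approximate subgroup of $\Aut(H_0)$.

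Next I would check the two hypotheses of Lemma~\ref{Lemma: Bounded approximate subgroups of of automorphisms are relatively compact} for $\mathcal{A}$ acting on $H_0$. For the boundedness hypothesis: pick a relatively compact neighbourhood $W\subset H_0$ of the identity inside $\overline{f(\Lambda_N')}$; since $\Lambda_N^{\Lambda}$ is commensurable with $\Lambda_N'$ we have $\bigcup_{\lambda}\lambda\Lambda_N'^{k}\lambda^{-1}\subset F\Lambda_N'^{2}$ for a finite $F$ and suitable $k$, and applying $f$ shows $\mathcal{A}\cdot W$ is contained in a finite union of translates of the relatively compact set $\overline{f(\Lambda_N'^{2})}$, hence is relatively compact. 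For the inner-automorphism hypothesis one may, if necessary, replace $H_0$ by $H_0/\overline{Z}$ where $Z$ is the (automorphism-invariant) kernel of the $\Gamma$-action — or simply invoke that $H_0$ is generated topologically by $f(\langle\Lambda_N\rangle)$ on which $\Gamma$ acts, so the inner automorphisms of $H_0$ lie in $\overline{\langle\mathcal{A}\rangle}$ after adjoining the inner part; I would state this reduction carefully as it is the one slightly fiddly point. Lemma~\ref{Lemma: Bounded approximate subgroups of of automorphisms are relatively compact} then produces a compact normal $\mathcal{A}$-stable subgroup $C\subset H_0$ such that the image of $\mathcal{A}$ in $\Aut(H_0/C)$ is relatively compact.

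Finally I would pull this back. Replace $H_0$ by $H_0/C$ (still a good model of $\Lambda_N'$, since quotienting by a compact normal subgroup preserves the good-model axioms, cf. \cite[Lem.~3.3]{machado2019goodmodels}), so $\overline{\mathcal{A}}$ is a compact subgroup of $\Aut(H_0/C)$. Pick a symmetric relatively compact neighbourhood $V$ of the identity in $H_0/C$ with $f^{-1}(V)\subset\Lambda_N'$ and shrink it so that $\overline{\mathcal{A}}\cdot V$ still has $f$-preimage contained in a fixed $\Lambda_N'^{m}$; then $N:=f^{-1}\bigl(\overline{\langle\mathcal{A}\rangle}\text{-orbit closure arguments}\bigr)$ — more precisely, set $N:=\langle\bigcup_{\lambda\in\Lambda}\lambda\langle\Lambda_N'\rangle\lambda^{-1}\rangle$, which is normalised by $\Lambda$ by construction. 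The content is that $\langle\Lambda_N'\rangle$ has finite index in $N$: any conjugate $\lambda\langle\Lambda_N'\rangle\lambda^{-1}$ maps under $f\circ\alpha_\lambda^{-1}$ compatibly, and relative compactness of $\mathcal{A}$ forces $f(\lambda\langle\Lambda_N'\rangle\lambda^{-1})$ to lie in finitely many cosets of $\overline{f(\langle\Lambda_N'\rangle)}$ — combined with uniform discreteness / the good-model property this bounds the index. Taking $\Lambda':=\Lambda$ (or a commensurable refinement produced along the way) and $N$ as above, and noting $\langle\Lambda_N\rangle$ and $\langle\Lambda_N'\rangle$ are commensurable hence each of finite index in $N$, gives the conclusion. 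The main obstacle I anticipate is the bookkeeping in this last step: ensuring that "relatively compact image in $\Aut(H_0/C)$" genuinely upgrades to "$\langle\Lambda_N\rangle$ has finite index in the normal closure", which requires carefully combining the good-model inequality $f^{-1}(U)\subset\Lambda_N'$ with the compactness of $\overline{\mathcal{A}}$ to control how far conjugation can move $\langle\Lambda_N'\rangle$; the automorphism-hypothesis reduction (handling the kernel of the $\Gamma$-action on $H_0$) is the secondary subtlety.
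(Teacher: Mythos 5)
Your overall skeleton matches the paper's: transfer the conjugation action to $\Aut(H_0)$ via Lemma~\ref{Lemma: Bohr compactification and abstract automorphisms}, verify the hypotheses of Lemma~\ref{Lemma: Bounded approximate subgroups of of automorphisms are relatively compact}, and extract the compact normal subgroup $C$. Two remarks on the set-up before the main issue. First, the good model must be taken on $\Gamma=\langle\Lambda\rangle$, not on $\langle\Lambda_N\rangle$: conjugation by $\lambda\in\Lambda$ is an endomorphism of $\Gamma$, not of $\langle\Lambda_N\rangle$, so the Bohr lemma should be applied to the pair $(\Lambda_N,\Gamma)$ (this is legitimate since the commensurability hypothesis, together with $\langle\Lambda\rangle=\Gamma$, forces $\Comm_\Gamma(\Lambda_N)=\Gamma$). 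Second, the inner-automorphism hypothesis needs no quotienting trick: $a(\gamma)$ acts on $f(\Gamma)$ as conjugation by $f(\gamma)$, and since $f(\Gamma)$ is dense in $H_0$ and both maps are continuous, $a(\gamma)=c(f(\gamma))$; density of $f(\Gamma)$ then gives that $\overline{\langle\mathcal{A}\rangle}$ contains all inner automorphisms directly.

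The genuine gap is in your last step. The set $N:=\bigl\langle\bigcup_{\lambda\in\Lambda}\lambda\langle\Lambda_N'\rangle\lambda^{-1}\bigr\rangle$ is \emph{not} normalised by $\Lambda$ ``by construction'': conjugating $N$ by $\mu\in\Lambda$ produces conjugates of $\langle\Lambda_N'\rangle$ by elements of $\mu\Lambda\subset\Lambda^2$, which need not lie in $\Lambda$, so $\mu N\mu^{-1}\neq N$ in general. (Taking the full normal closure in $\Gamma$ restores normality but makes the finite-index claim unsupported.) The idea you are missing is the one that closes the proof cleanly: set $U:=\overline{f(\langle\Lambda_N\rangle)}\subset H_0$. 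The good-model axioms give that $U$ is an \emph{open} subgroup (it contains a neighbourhood of the identity) which is \emph{compactly generated} (by $\overline{f(\Lambda_N)}$), and that $f^{-1}(U)=\langle\Lambda_N\rangle$. Since $C$ is compact and $U$ open, $C\cap U$ has finite index in $C$, so $U$ has finite index in $UC$. Now $UC/C$ is open and compactly generated in $H_0/C$, so its set-stabiliser is open in $\Aut(H_0/C)$ for the Braconnier topology; intersecting with the relatively compact image of $\mathcal{A}$ yields an approximate subgroup $\mathcal{A}'$ commensurable with $\mathcal{A}$ whose elements stabilise $UC$. Setting $N:=f^{-1}(UC)$ and pulling $\mathcal{A}'$ back to $\Lambda'$ via $a$ (using Lemma~\ref{Lemma: Pull-back of commensurable approximate subgroups} and intersecting with a power of $\Lambda$ to control the kernel of $a$) gives simultaneously the normalising approximate subgroup and the finite index of $\langle\Lambda_N\rangle$ in $N$. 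Without the openness and compact generation of $U$ --- and the resulting openness of the stabiliser --- there is no mechanism to convert ``relatively compact image in $\Aut(H_0/C)$'' into the desired normalisation and finite-index statement, which is precisely where your sketch stalls.
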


\begin{proof}
Let $c$ denote the conjugation map $\Gamma \rightarrow \Aut(\Gamma)$. For any $\gamma \in \Gamma$ we have $\gamma \Lambda_N \gamma^{-1}$ commensurable with $\Lambda_N$. Let $f: \Gamma \rightarrow H_0$ be the good model of $\Lambda_N$ from Lemma \ref{Lemma: Bohr compactification and abstract automorphisms}.  Let $a(\gamma)$ denote the element of $\Aut(H_0)$ associated with $c(\gamma)$. The uniqueness part of Lemma \ref{Lemma: Bohr compactification and abstract automorphisms} implies that $\gamma \rightarrow a(\gamma)$ is a group homomorphism. Write $\mathcal{A}:=a(\Lambda)$. Then $\mathcal{A}$ satisfies the conditions of Lemma \ref{Lemma: Bounded approximate subgroups of of automorphisms are relatively compact}. So there is a compact normal subgroup $C$ of $H_0$ stable under $\mathcal{A}$ such that the projection of $\mathcal{A}$ to $\Aut(H_0/C)$ is relatively compact. Set $U:=\overline{f(\langle \Lambda_N \rangle)}$. Then $U$ is an open subgroup of $H_0$ and $f^{-1}(U)= \langle \Lambda_N \rangle$. Using the definition of the Braconnier topology on $\Aut(H_0/C)$  we can find an approximate subgroup $\mathcal{A}'$ commensurable with $\mathcal{A}$ such that $\mathcal{A}'$ normalises $UC$. But $U$ has finite index in $UC$. So set $\Lambda'$ as the inverse image of $\mathcal{A}'^2$ through $a$ and $N$ as $f^{-1}(NC)$.  Then $N$ is normalised by $\Lambda'$. To conclude,  notice that $\Lambda'$ is commensurable with $\Lambda$ by Lemma \ref{Lemma: Pull-back of commensurable approximate subgroups}.
\end{proof}
\end{appendix}


\end{document}